\theoremstyle{plain}
\newtheorem{thm}{Theorem}[section]
\newtheorem{lemma}[thm]{Lemma}
\newtheorem{cor}[thm]{Corollary}
\newtheorem{proposition}[thm]{Proposition}
\theoremstyle{definition}
\theoremstyle{remark}
\newtheorem{exa}[thm]{Example}
\newtheorem*{exa*}{Example}
\newtheorem*{rem*}{Remark}
\DeclareMathOperator{\IN}{\mathbb{N}}
\DeclareMathOperator{\IR}{\mathbb{R}}
\DeclareMathOperator{\IE}{\mathbb{E}}
\DeclareMathOperator{\IP}{\mathbb{P}}
\DeclareMathOperator{\Var}{Var}
\DeclareMathOperator{\Ent}{Ent}
\DeclareMathOperator{\Id}{Id}
\DeclareMathOperator{\diag}{diag}
\DeclareMathOperator{\artanh}{artanh}
\DeclareMathOperator{\Hess}{Hess}
\DeclareMathOperator{\esssup}{\esssup}
\DeclarePairedDelimiter{\abs}{\lvert}{\rvert}
\DeclarePairedDelimiter{\norm}{\lVert}{\rVert}
\DeclarePairedDelimiter{\skal}{\langle}{\rangle}
\newcommand{\eins}{\text{$\mathbbm{1}$}}
\renewcommand{\epsilon}{\varepsilon}
\renewcommand{\phi}{\varphi}
\renewcommand{\tilde}{\widetilde}
\numberwithin{equation}{section}
\begin{document}
\title[Block Spin Ising models]{Fluctuation results for general block spin Ising models}
\author{Holger Kn\"opfel}
\address[Holger Kn\"opfel]{Fachbereich Mathematik und Informatik,
Universit\"at M\"unster,
Einsteinstra\ss e 62,
48149 M\"unster,
Germany}
\email{Holger.Knoepfel@ruhr-uni-bochum.de}

\author{Matthias L\"owe}
\address[Matthias L\"owe]{Fachbereich Mathematik und Informatik,
Universit\"at M\"unster,
Einsteinstra\ss e 62,
48149 M\"unster,
Germany}
\email{maloewe@math.uni-muenster.de}
\thanks{M.L.'s research was funded by the Deutsche Forschungsgemeinschaft (DFG, German Research Foundation) under Germany's Excellence Strategy EXC 2044-390685587, Mathematics M{\"u}nster: Dynamics - Geometry - Structure}

\author{Kristina Schubert}
\address[Kristina Schubert]{Fakult\"at f\"ur Mathematik,
TU Dortmund,
Vogelpothsweg 87,
44227 Dortmund,
Germany}
\email{kristina.schubert@tu-dortmund.de}

\author{Arthur Sinulis}
\address[Arthur Sinulis]{Fakult\"at f\"ur Mathematik,
Universit\"at Bielefeld,
Postfach 100131,
33501 Bielefeld,
Germany}
\email{asinulis@math.uni-bielefeld.de}
\thanks{A.S. acknowledges financial support by the German Research Foundation via the CRC 1283.}

\subjclass{Primary 60F05, 60F10, Secondary 82B20}
\keywords{block spin Ising models, central limit theorem, large deviation principle, phase transition, Stein's method}
%

\begin{abstract}
	We study a block spin mean-field Ising model, i.\,e. a model of spins in which the vertices are divided into a finite number of blocks with each block having a fixed proportion of vertices, and where pair interactions are given according to their blocks.
	For the vector of block magnetizations we prove Large Deviation Principles and Central Limit Theorems under general assumptions for the block interaction matrix. Using the exchangeable pair approach of Stein's method we establish a rate of convergence in the Central Limit Theorem for the block magnetization vector in the high temperature regime.
\end{abstract}

\maketitle
\section{Introduction}			\label{section:Introduction}
Mean-field block models were introduced as an approximation of a lattice model of a meta-magnet, see e.g.~formula (4.1) in \cite{KC75}. Furthermore, they can arise in disordered systems with random pair interactions, studied for example in \cite{HEC83},\cite{HGHK86},\cite{Com89}. Later, they were rediscovered as interesting models for statistical mechanics systems, see \cite{GC08}, \cite{FC11}, \cite{Col14}, \cite{LS18}, \cite{KT18}, as well as models for social interactions between several groups, e.g.~in \cite{GBC09}, \cite{ABC10}, \cite{OOA18}. This latter approach follows very much the social re-interpretation for {\sl one} group of the Curie-Weiss model in \cite{BD01} or of the Hopfield model in \cite{CL10} or \cite{KL07}. A third source of interest in mean-field spin block models is a statistical point of view. In \cite{BRS17}, the authors gave another analysis of the bipartite mean-field Ising block model with equal block sizes, and asked the question whether one can recover the blocks from several observations from this model, and if so, how many observations are needed.
In this aspect, the block spin models are related to the stochastic block models from random graph theory. These have been in the center of interest in statistics and probability theory over the past couple of years (see, e.g.~\cite{AL18}, \cite{GMZZ17}). The statistical interest in them arises from their relation to graphical models. In this framework a major question is always how to reconstruct the block structure under sparsity assumptions (see e.g.~\cite{BMS13}, \cite{MNS16}, \cite{Bre15}).

Our starting point is \cite{LS18}. There, the fluctuations of an order parameter for a two-groups block model with equal block sizes were analyzed on the level of large deviations principles (LDPs, for short) and central limit theorems (CLTs). Starting from these results, there are several natural questions. First: Can these results be also proven for systems with not necessarily identical block sizes? Second: Can we generalize our results to the situation of more than two groups? And third: Can we give a speed of convergence for the CLT? The main goal of the current note is to (partially) answer these questions. To this end, we will present a new approach to mean-field block spin models, via the corresponding block interaction matrix. Moreover, to obtain a speed of convergence in the CLT, we will employ Stein's method as in \cite{EL10}, \cite{CS11} for the standard mean-field Ising, or Curie--Weiss model.

The rest of this note is organized in the following way. In the remaining part of this introduction, we define our model in a way that makes it accessible to our techniques in Sections 2 and 3, and state our main results. Section 2 is devoted to the proof of the LDP results. Afterwards, we analyze the critical points of the rate function and obtain the \emph{mean field equations}, showing that in the high temperature case the only maximum is $0$, whereas in the low temperature case there are nonzero maximizers, and we obtain a solution for a special class of block interaction matrices. In Section 3 we prove the CLT for the order parameter of the model in two ways. One uses the classical Hubbard--Stratonovich transformation. This was already used for proving the CLT for the magnetization in the Curie--Weiss model in \cite{EN78}, and also is the core technique for the CLT in \cite{LS18}. The second proof uses a multivariate version of the exchangeable pair approach in Stein's method, developed in \cite{RR09}. Lastly, Section \ref{section:Discussion} contains a discussion of some of the results and further open questions.

\subsection{The model}			\label{section:Introduction:themodel}
The block spin Ising model will be characterized by two quantities, a number $k \in \IN$ -- \emph{number of blocks} -- and a symmetric, positive definite matrix $A \in \IR^{k \times k}$, which is the \emph{block interaction matrix}. $A_{ij}$ will determine the strength of interaction between two particles in block $i$ and $j$ respectively. Here, $\IR^{r_1 \times r_2}$ is the set of all $r_1$ by $r_2$ matrices with real entries. \par
Let $N(n)$ be a strictly increasing subsequence of $\IN$. For a system of size $N=N(n)$  let $B_1^{(n)}, \ldots, B_k^{(n)} \subset \{1,\ldots, N\}$ be a partition of $\{1,\ldots, N\}$ into $k$ blocks. Without loss of generality, we assume that the indices in the blocks are ordered, i.e.~if $i_0 \in  B_{i}^{(n)}$ and $j_0 \in B_{j}^{(n)} $ and $i<j$, it follows $i_0 <j_0$.
We call $| B_i^{(n)}|$ the \emph{block size} of the $i$-th block. Note that, in particular, we have a system of size $N$, where for $n \in \mathbb N$
\[ N = N(n) = \sum_{i = 1}^k \abs{B_i^{(n)}}.\]
Define for each $n \in \IN$ the matrix of the relative block sizes
\begin{equation*}
	\Gamma_n \coloneqq \diag \left( \frac{\sqrt{\abs{B_1^{(n)}}}}{\sqrt{N}}, \ldots, \frac{\sqrt{\abs{B_k^{(n)}}}}{\sqrt{N}} \right)  \in \mathbb R^{k \times k}.
\end{equation*}
We assume that for each $i = 1,\ldots, k$ the limit
$$
\gamma_i \coloneqq \lim_{n \to \infty} \sqrt{\frac{\abs{B_i^{(n)}}}{N}} \in (0,1)
$$
exists, so that the matrix of asymptotic relative block sizes
\[
	\Gamma_\infty \coloneqq \diag(\gamma_1, \ldots, \gamma_k) \in \IR^{k \times k}
\]
is invertible. If the $k$ partition blocks are asymptotically of the same size, i.e.
$$
\Gamma_\infty = \frac{1}{\sqrt{k}} \Id \qquad \mbox{ resp.} \quad \gamma_i=\frac{1}{\sqrt{k}} \mbox{ for }i=1, \ldots, k,
$$
we call this the \textit{uniform case}. The block spin Ising model with $k$ blocks of sizes $\abs{B^{(n)}_1}, \ldots, \abs{B^{(n)}_k}$ and block interaction matrix $A$ is defined as the Ising model with interaction matrix
\[
	J_n \coloneqq \frac{1}{N} \begin{pmatrix}
	A_{11} O(\abs{B_1^{(n)}}, \abs{B_1^{(n)}}) & \cdots & A_{1k} O(\abs{B_1^{(n)}}, \abs{B_k^{(n)})} \\
	\vdots & \vdots & \vdots \\
	A_{k,1} O(\abs{B_k^{(n)}}, \abs{B_1^{(n)}}) & \cdots &A_{kk} O(\abs{B_k^{(n)}}, \abs{B_k^{(n)}})
	\end{pmatrix},
\]
where $O(m,n) \in \IR^{m \times n}$ is the matrix with all entries equal to $1$. We denote this model by $\mu_{J_n}$. More precisely, $\mu_{J_n}$ is the probability measure on $\{-1,+1\}^N, N = N(n),$ defined by
\[
	\mu_{J_n}(x) = Z_n^{-1} \exp\left( H_n(x) \right) = Z_n^{-1} \exp\left( \frac{1}{2} \skal{x, J_nx} \right) = Z_n^{-1} \exp\left( \frac{1}{2}\sum_{i,j = 1}^N (J_n)_{ij} x_i x_j \right).
\]
Here, of course, $Z_n$ is the partition function
\[
Z_n := \sum_{ x \in \{-1,+1\}^N } \exp\left( \frac{1}{2}\sum_{i,j = 1}^N (J_n)_{ij} x_i x_j \right).
\]
Note that, contrary to the usual convention, we do not require the diagonal of $J_n$ to be zero for technical convenience. However, since $x_i^2 = 1$, both $J_n$ and its ``dediagonalized'' version $\tilde{J}_n = J_n - \diag(J_{ii})$ give rise to the same Ising model. Here and in the sequel, $\diag(\lambda_1, \ldots, \lambda_l)$ is a diagonal $l \times l$ matrix with values $\lambda_1, \ldots, \lambda_l$ on its diagonal. Lastly, for any $p, q \in [1,\infty]$ and any matrix $A \in \IR^{k \times k}$ we define the operator norm
\[
\norm{A}_{p \to q} \coloneqq \sup_{x \in \IR^k : \norm{x}_p = 1} \norm{Ax}_q.
\]

\subsection{Main results}			\label{section:Introduction:mainresults}
We prove results on the fluctuations of the block magnetization vector on different scales. In what follows, we use the non-normalized and normalized versions of the block magnetization vector defined as
\begin{align*}
	m^{(n)} = m^{(n)}(x) = (m^{(n)}_1(x), \ldots, m^{(n)}_k(x)) &= \left(\sum_{j \in B_i^{(n)}} x_j\right)_{i = 1, \ldots, k}, \\
	\tilde{m}^{(n)} = \tilde{m}^{(n)}(x) = (\tilde{m}^{(n)}_1(x), \ldots, \tilde{m}^{(n)}_k(x)) &= \left( \frac{1}{\abs{B_i^{(n)}}} \sum_{j \in B_i^{(n)}} x_j \right)_{i = 1, \ldots, k},  \\
	\widehat{m}^{(n)} = \widehat{m}^{(n)}(x) = (\widehat{m}^{(n)}_1(x), \ldots, \widehat{m}^{(n)}_k(x)) &= \left( \frac{1}{\sqrt{\abs{B_i^{(n)}}}} \sum_{j \in B_i^{(n)}} x_j \right)_{i = 1,\ldots,k}.
\end{align*}
Note that this allows us to rewrite the Hamiltonian $H_n$ of $\mu_{J_n}$ as
\begin{equation*}
H_n(x) = \frac{1}{2N} \skal*{m^{(n)}, Am^{(n)}} = \frac{1}{2} \skal*{\widehat{m}^{(n)}, \Gamma_n A \Gamma_n \widehat{m}^{(n)}} = \frac{N}{2} \skal*{\Gamma_n^2 A \Gamma_n^2 \tilde{m}^{(n)}, \tilde{m}^{(n)}},
\end{equation*}
which we use tacitly.

We begin by presenting the large deviation results. The first result is a generalization of \cite[Theorem 2.1]{LS18}. In that paper, an LDP for $\tilde{m}^{(n)}$ was proved in the situation of $k=2$ blocks of equal size. Here we analyze the general case.

\begin{thm}	   \label{theorem:LDPgeneralcase}
Let $k \in \IN$ and $A$ be a block interaction matrix. The sequence $(\tilde{m}^{(n)})_{n \in \IN}$ satisfies an LDP under $(\mu_{J_n})_{n \in \IN}$ with speed $N$ and rate function
\begin{equation*}
	J(x) \coloneqq \sup_{y \in \IR^k} I(y) - I(x),
\end{equation*}
where
\begin{equation*}
	I(x) \coloneqq \frac{1}{2} \skal*{x,\Gamma_\infty^2 A \Gamma_\infty^2 x} - \sum_{i = 1}^k \gamma_i^2 L^*(x_i),
\end{equation*}
and $L^*$ denotes the convex conjugate of $\log \cosh$, i.e.
\begin{equation*}
	L^*(x) \coloneqq \frac{1}{2}(1+x) \log(1+x) + \frac{1}{2} (1-x) \log(1-x) \quad \quad x \in [-1,+1].
\end{equation*}
\end{thm}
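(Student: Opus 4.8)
The plan is to derive the LDP for $\tilde m^{(n)}$ via an exponential change of measure (Cramér-type argument) combined with Varadhan's lemma, exploiting the fact that, conditionally on nothing, the spins are independent under the a priori product measure and the Hamiltonian is a quadratic form in $\tilde m^{(n)}$. Concretely, let $\nu_N$ denote the uniform (product) measure on $\{-1,+1\}^N$. Under $\nu_N$, each block average $\tilde m^{(n)}_i$ is an empirical mean of $\abs{B_i^{(n)}}$ i.i.d.\ Rademacher variables, so by Cramér's theorem $\tilde m^{(n)}_i$ satisfies an LDP with speed $\abs{B_i^{(n)}}$ and rate $L^*$. Since $\abs{B_i^{(n)}}/N \to \gamma_i^2$ and the blocks are independent, the vector $\tilde m^{(n)}$ satisfies an LDP under $\nu_N$ with speed $N$ and rate function $\sum_{i=1}^k \gamma_i^2 L^*(x_i)$; here one should be a little careful that the prefactors $\abs{B_i^{(n)}}/N$ only converge rather than being exactly $\gamma_i^2$, which is handled by a standard approximation argument (sandwiching the rate function, or invoking the fact that a sequence of LDPs with converging good rate functions whose level sets are uniformly controlled still yields an LDP with the limiting rate — alternatively one truncates and uses exponential tightness, which is immediate here since everything lives in the compact set $[-1,1]^k$).

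Next I would tilt by the Hamiltonian. Using the representation $H_n(x) = \tfrac{N}{2}\skal{\Gamma_n^2 A \Gamma_n^2 \tilde m^{(n)}, \tilde m^{(n)}}$ recorded in the excerpt, we may write
\begin{equation*}
\mu_{J_n}(x) = \frac{1}{Z_n}\, 2^N \exp\!\left( \tfrac{N}{2}\skal{\Gamma_n^2 A \Gamma_n^2 \tilde m^{(n)}(x), \tilde m^{(n)}(x)} \right)\nu_N(x),
\end{equation*}
so that $\mu_{J_n}$ is obtained from $\nu_N$ by an exponential tilt with the continuous functional $F_n(x) \coloneqq \tfrac12\skal{\Gamma_n^2 A \Gamma_n^2 \tilde m^{(n)}, \tilde m^{(n)}}$. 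Since $\Gamma_n \to \Gamma_\infty$, the maps $y \mapsto \tfrac12\skal{\Gamma_n^2 A \Gamma_n^2 y, y}$ converge uniformly on the compact set $[-1,1]^k$ to $y \mapsto \tfrac12\skal{\Gamma_\infty^2 A \Gamma_\infty^2 y, y}$, and this functional is bounded and continuous. Applying Varadhan's integral lemma (in its tilted-measure form, i.e.\ the Laplace principle / Gibbs variational formulation: see e.g.\ Dembo–Zeitouni) to the LDP from the first step, the push-forward of $\mu_{J_n}$ under $\tilde m^{(n)}$ satisfies an LDP with speed $N$ and rate function
\begin{equation*}
\left[ \sum_{i=1}^k \gamma_i^2 L^*(x_i) - \tfrac12\skal{\Gamma_\infty^2 A \Gamma_\infty^2 x, x} \right] - \inf_{y \in [-1,1]^k}\left[ \sum_{i=1}^k \gamma_i^2 L^*(y_i) - \tfrac12\skal{\Gamma_\infty^2 A \Gamma_\infty^2 y, y} \right] = J(x),
\end{equation*}
after noting that $-I(x) = \sum_i \gamma_i^2 L^*(x_i) - \tfrac12\skal{\Gamma_\infty^2 A \Gamma_\infty^2 x, x}$ and hence $\inf_y(-I(y)) = -\sup_y I(y)$; the contribution of the normalizing constant $Z_n 2^{-N}$ is exactly this infimum, by the standard argument that the tilted total mass has logarithmic asymptotics given by the same variational problem. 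This yields precisely the claimed rate function $J(x) = \sup_y I(y) - I(x)$.

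The main technical point to watch — and what I expect to be the only real obstacle — is the handling of the non-exact prefactors: $\Gamma_n$ is not equal to $\Gamma_\infty$, only convergent, so both the base LDP rate $\sum_i (\abs{B_i^{(n)}}/N) L^*(x_i)$ and the tilting functional carry $n$-dependence. The clean way around this is to observe that everything is supported on the fixed compact $[-1,1]^k$, so exponential tightness is automatic, and to combine a uniform-convergence estimate for the tilting functionals with the fact that $L^*$ is a fixed good rate function with compact level sets; a short $\epsilon$-argument then shows the upper and lower bounds with the limiting rate $J$. One should also verify that $L^*$ is indeed the Legendre transform of $\log\cosh$ and lower semicontinuous on $[-1,1]$ (extended to $+\infty$ outside), which is classical, and that $J$ is a good rate function — immediate since it is lower semicontinuous with compact domain. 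Finally, $J \ge 0$ with the stated form requires no separate argument, as it is built as a sup minus the function itself.
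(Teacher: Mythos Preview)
Your proposal is correct and follows essentially the same two-step strategy as the paper: establish an LDP for $\tilde m^{(n)}$ under the uniform product measure, then tilt by the quadratic Hamiltonian via Varadhan's lemma. The paper's execution differs only in that it uses the G\"artner--Ellis theorem for the base LDP (which absorbs the convergence $\abs{B_i^{(n)}}/N \to \gamma_i^2$ automatically into the limit of the scaled cumulant generating function, avoiding the sandwiching argument you flag), and it isolates the $n$-dependence of the tilt into a short standalone lemma stating that a perturbation $\exp(N\eta_n)$ with $\norm{\eta_n}_{L^\infty(\mu_n)} \to 0$ does not affect the tilted LDP --- exactly the $\epsilon$-argument you sketch, made explicit.
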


More precisely, in the notion of large deviations, the sequence of push-forwards $(\tilde{m}^{(n)} \circ \mu_{J_n})_{n \in \IN}$ satisfies an LDP with speed $N$ and the rate function $I$.

In the special case of asymptotically uniform block sizes the function $I$ is related to the matrix $A$ in an even more straightforward way, since in this case
\begin{equation*}
	I(x) = \frac{1}{2k^2} \skal{x, Ax} - \frac{1}{k} \sum_{i = 1}^k L^*(x_i).
\end{equation*}

We show that the rate function $I$ has a unique minimum at $0$ in the case $\norm{\Gamma_\infty^2 A \Gamma_\infty^2}_{2 \to 2} \le 1$, which yields the following corollary.

\begin{cor}\label{corollary:hightemperature}
Under the general assumptions, if $\norm{\Gamma_\infty A \Gamma_\infty}_{2 \to 2} \le 1$, the normalized vector of magnetizations $\tilde{m}^{(n)}$ converges to $0$ exponentially fast in $\mu_{J_n}$-probability. By this we mean more precisely, for each $\varepsilon >0$ there is a constant $I_\varepsilon$ such that 
$$
\mu_{J_n}(||\tilde{m}^{(n)}|| \ge \varepsilon) \le \exp(-N I_\varepsilon).
$$
\end{cor}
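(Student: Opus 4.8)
The plan is to obtain the statement from the large deviation principle of Theorem~\ref{theorem:LDPgeneralcase}, together with the analysis of the maximisers of $I$ that precedes the corollary. Recall that $(\tilde m^{(n)})_{n\in\IN}$ satisfies an LDP with speed $N$ and rate function $J(x)=\sup_{y\in\IR^k}I(y)-I(x)$, where $L^*$, and therefore $I$ and $J$, are understood to be $+\infty$ outside the cube $[-1,1]^k$. Since every $\mu_{J_n}$ is supported on $\{-1,+1\}^N$, the vector $\tilde m^{(n)}$ takes values in $[-1,1]^k$, so $J$ is a good rate function whose effective domain is the compact cube, and $J$ is continuous there (using $(1\pm x)\log(1\pm x)\to0$ at the endpoints).

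The one structural input is the fact established above in the analysis of the mean-field equations: under the hypothesis $\norm{\Gamma_\infty A\Gamma_\infty}_{2\to2}\le1$ the function $I$ attains its supremum over $\IR^k$ only at $x=0$, with $I(0)=0$. Equivalently, $J\ge0$ and $J(x)=0$ if and only if $x=0$. Given this, fix $\varepsilon>0$ and apply the LDP upper bound to the closed set $F_\varepsilon\coloneqq\{x\in\IR^k:\norm{x}\ge\varepsilon\}$; since $\{\norm{\tilde m^{(n)}}\ge\varepsilon\}=\{\tilde m^{(n)}\in F_\varepsilon\}$, this yields
\[
\limsup_{n\to\infty}\frac1N\log\mu_{J_n}\bigl(\norm{\tilde m^{(n)}}\ge\varepsilon\bigr)\le-\inf_{x\in F_\varepsilon}J(x).
\]
The infimum on the right is that of the continuous function $J$ over the compact set $F_\varepsilon\cap[-1,1]^k$, hence it is attained, and it is strictly positive because $J$ vanishes only at the origin while $0\notin F_\varepsilon$. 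Setting $I_\varepsilon\coloneqq\tfrac12\inf_{x\in F_\varepsilon}J(x)>0$, we get $\mu_{J_n}(\norm{\tilde m^{(n)}}\ge\varepsilon)\le\exp(-NI_\varepsilon)$ for all sufficiently large $n$, and the finitely many remaining indices are absorbed into the constant in the usual way (shrinking $I_\varepsilon$ if necessary).

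The only genuinely substantial ingredient is the uniqueness of the maximiser of $I$ at $0$, which is precisely where the spectral hypothesis on $\Gamma_\infty A\Gamma_\infty$ is used; it is proved by the same critical-point computation one carries out for the Curie--Weiss model, and I expect no further obstacle beyond keeping track of the fact that the constant $I_\varepsilon$ can be chosen independently of $n$.
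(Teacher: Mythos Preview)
Your overall strategy---LDP upper bound plus uniqueness of the maximiser of $I$ at the origin---is exactly the paper's approach, and your derivation of the exponential bound from the LDP is correct and more carefully spelled out than in the paper.

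However, you have inverted the emphasis relative to the paper, and in doing so you have deferred the only nontrivial part. In the paper, the uniqueness of the maximiser is not ``established above'' as a separate input you can cite; it \emph{is} the content of the proof of this corollary. The LDP-to-concentration step is treated there as essentially immediate, while the work goes into showing that $x=0$ is the unique critical point of $J$. Concretely, any critical point satisfies $\Gamma_\infty^2 A\Gamma_\infty^2 y=\Gamma_\infty^2\artanh(y)$, and the paper rules out $y\neq 0$ via the chain
\[
\norm{\Gamma_\infty^2 y}_2^2 \;\ge\; \skal{\Gamma_\infty^2 y,\Gamma_\infty^2\artanh(y)} \;=\;\sum_i \gamma_i^4\, y_i\artanh(y_i)\;\ge\;\sum_i \gamma_i^4 y_i^2\;=\;\norm{\Gamma_\infty^2 y}_2^2,
\]
where the first inequality uses $\norm{\Gamma_\infty A\Gamma_\infty}_{2\to2}\le 1$ (via the fact that $\Gamma_\infty^2 A$ and $\Gamma_\infty A\Gamma_\infty$ have the same spectrum) and the second uses $x\artanh(x)\ge x^2$ with equality only at $0$. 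This is close in spirit to the Curie--Weiss computation you allude to, but the matrix step is genuinely where the spectral hypothesis enters and is not quite automatic; your sketch should include it rather than point to it.
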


Let us discuss the large deviation results. In the classical Curie--Weiss model, i.e.~the case $k=1$, there is a phase transition: The limiting behavior of $\tilde m^{(n)}$ changes, depending on whether $A_{11} \le 1$ (the high temperature regime), or $A_{11} > 1$ (the low temperature regime) (see \cite{Ell06} for an extensive treatment of this model). A corresponding phase transition can be observed in our model. This is stated in \cite{FU12} for the bipartite model. In \cite{KT18} the authors prove the existence of such a phase transition using the method of moments. Of course, with that method one cannot obtain an exponential speed of convergence as in Corollary \ref{corollary:hightemperature}.
In accordance with the notion in the classical Curie--Weiss model, we will call these different parameter regimes the  $\textit{high temperature}$ and  $\textit{low temperature}$ regime, respectively.
Here, the high temperature regime corresponds to $\norm{\Gamma_\infty A \Gamma_\infty}_{2 \to 2} \le 1$ and the low temperature regime to  $\norm{\Gamma_\infty A \Gamma_\infty}_{2 \to 2} > 1$. In the special case of asymptotically uniform block sizes (i.e.~$\Gamma_\infty = \frac{1}{\sqrt{k}} \Id$) these conditions reduce to $\norm{A}_{2 \to 2} \leq k$ and $\norm{A}_{2 \to 2} > k$ respectively.

Next, we consider the scaled block magnetization vector $\widehat{m}^{(n)}$. Again, in the classical (i.e.~one-dimensional) case it is known that the magnetization satisfies a central limit theorem with variance $\sigma^2 = (1-A_{11})^{-1}$ whenever $A_{11} < 1$. The following theorem is a generalization of this phenomenon.

\begin{thm}\label{theorem:CLTgeneralCase}
Let $k \in \IN$ and $A$ be a block interaction matrix. In the high temperature regime we have
\begin{equation*}
	\widehat{m}^{(n)} \Rightarrow \mathcal{N}(0, \Sigma_\infty) = \mathcal{N}\left(0,\left(\Id - \Gamma_\infty A \Gamma_\infty\right)^{-1}\right).
\end{equation*}
Consequently, in the uniform case
\begin{equation*}
	\widehat{m}^{(n)} \Rightarrow \mathcal{N}\left(0,\left(\Id - \frac{1}{k}A\right)^{-1}\right).
\end{equation*}
\end{thm}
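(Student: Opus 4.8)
\textit{Proof strategy (Hubbard--Stratonovich).} The plan is to follow the classical Hubbard--Stratonovich route of Ellis and Newman \cite{EN78}, adapted to the vector-valued setting with non-uniform block sizes. Write $G_n \coloneqq \Gamma_n A \Gamma_n$; since $\Gamma_n \to \Gamma_\infty$ is invertible and $A$ is positive definite, $G_n$ is symmetric positive definite for all large $n$ and $H_n(x) = \tfrac12\skal{\widehat{m}^{(n)}(x),G_n\widehat{m}^{(n)}(x)}$. The Gaussian identity $\exp\bigl(\tfrac12\skal{v,G_n v}\bigr)=\mathbb{E}_W\bigl[\exp(\skal{W,v})\bigr]$ for $W\sim\mathcal{N}(0,G_n)$ then exhibits $\mu_{J_n}$ as the $x$-marginal of a probability measure on $\{-1,+1\}^N\times\IR^k$ under which, conditionally on $W=w$, the spins $x_j$ with $j\in B_i^{(n)}$ are i.i.d.\ $\pm1$ with mean $\tanh(w_i/\sqrt{\abs{B_i^{(n)}}})$, independently over $i$, while the marginal density of $W$ is proportional to $\exp\bigl(-\tfrac12\skal{w,G_n^{-1}w}\bigr)\prod_{i=1}^k\bigl(\cosh(w_i/\sqrt{\abs{B_i^{(n)}}})\bigr)^{\abs{B_i^{(n)}}}$. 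I would then compute the limiting characteristic function of $\widehat{m}^{(n)}$ from this representation, conditioning on $W$ and invoking Lévy's continuity theorem.

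\textit{Key steps.} The decisive normalization is $w=\sqrt{N}\,\Gamma_n s$. Using $\skal{w,G_n^{-1}w}=N\skal{s,A^{-1}s}$ and $\abs{B_i^{(n)}}\log\cosh(w_i/\sqrt{\abs{B_i^{(n)}}})=N(\Gamma_n^2)_{ii}\log\cosh s_i$, the marginal density of $s$ becomes proportional to $\exp(-N F_n(s))$, where $F_n(s)=\tfrac12\skal{s,A^{-1}s}-\sum_{i=1}^k(\Gamma_n^2)_{ii}\log\cosh s_i$ converges locally uniformly to $F_\infty(s)\coloneqq\tfrac12\skal{s,A^{-1}s}-\sum_{i=1}^k\gamma_i^2\log\cosh s_i$. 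Since $\Hess F_\infty(s)=A^{-1}-\Gamma_\infty^2\diag(\sech^2 s_1,\dots,\sech^2 s_k)\succeq A^{-1}-\Gamma_\infty^2$, and the high temperature condition $\norm{\Gamma_\infty A\Gamma_\infty}_{2\to2}<1$ is equivalent, after conjugating by $\Gamma_\infty^{-1}$ and inverting, to $A^{-1}-\Gamma_\infty^2\succ0$, the function $F_\infty$ is strictly convex; together with the coercivity estimate $F_\infty(s)\ge\tfrac12\lambda_{\min}(A^{-1})\norm{s}_2^2-\sum_{i=1}^k\gamma_i^2\abs{s_i}$ (uniform in large $n$ for $F_n$) it has a unique global minimum at $s=0$, with $\Hess F_\infty(0)=A^{-1}-\Gamma_\infty^2$. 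A Laplace/saddle-point analysis then yields $\sqrt{N}\,s\Rightarrow\mathcal{N}(0,(A^{-1}-\Gamma_\infty^2)^{-1})$. On the same scale, conditionally on $W$ the vector $\widehat{m}^{(n)}$ has conditional mean with $i$-th entry $\sqrt{\abs{B_i^{(n)}}}\tanh s_i=(\Gamma_n)_{ii}\sqrt{N}\tanh s_i\to\gamma_i u_i$ with $u\sim\mathcal{N}(0,(A^{-1}-\Gamma_\infty^2)^{-1})$, and conditional covariance $\diag(\sech^2 s_1,\dots,\sech^2 s_k)\to\Id$; a Lindeberg central limit theorem for the conditionally independent bounded increments $x_j-\tanh s_i$, combined with the $\sigma(W)$-measurability of the conditional mean, shows that the conditional law is asymptotically Gaussian with covariance $\Id$ centered at this mean. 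Hence $\widehat{m}^{(n)}$ converges in distribution to $V_1+V_2$ with $V_1\sim\mathcal{N}(0,\Id)$ and $V_2\sim\mathcal{N}(0,\Gamma_\infty(A^{-1}-\Gamma_\infty^2)^{-1}\Gamma_\infty)$ independent, and the identity $\Id+\Gamma_\infty(A^{-1}-\Gamma_\infty^2)^{-1}\Gamma_\infty=(\Id-\Gamma_\infty A\Gamma_\infty)^{-1}$ (checked by multiplying out, using $A^{-1}-\Gamma_\infty^2=(\Id-\Gamma_\infty^2 A)A^{-1}$) identifies the limit as $\mathcal{N}(0,\Sigma_\infty)$. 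The uniform case is the specialization $\Gamma_\infty=k^{-1/2}\Id$.

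\textit{Main obstacle.} The hard part is to make the saddle-point step rigorous: one must show that, uniformly in $n$, the $s$-mass outside a fixed neighbourhood of the origin is negligible relative to the $N^{-k/2}$ contribution of the neighbourhood — this is precisely where the strict high temperature hypothesis enters, and it can alternatively be imported from the large deviation picture of Theorem~\ref{theorem:LDPgeneralcase} and Corollary~\ref{corollary:hightemperature}, since $F_\infty$ is the Legendre-type dual of the rate function and therefore inherits a unique minimizer at $0$ — and then to control the $o(1)$ errors in the local expansion together with the joint convergence of the ($\sigma(W)$-measurable) conditional mean and the conditional fluctuations (e.g. via a Berry--Esseen bound making the conditional characteristic function converge in $L^1$). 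Technically it is cleanest to run everything through characteristic functions, where conditioning on $W$ turns $\mathbb{E}_{\mu_{J_n}}[\exp(i\skal{t,\widehat{m}^{(n)}})]=\mathbb{E}_W\bigl[\mathbb{E}[\exp(i\skal{t,\widehat{m}^{(n)}})\mid W]\bigr]$ into a single Laplace asymptotics in $s$ with smooth amplitude $\exp\bigl(\sum_{i=1}^k\bigl(i\sqrt{\abs{B_i^{(n)}}}\,t_i\tanh s_i-\tfrac12 t_i^2\sech^2 s_i\bigr)+o(1)\bigr)$. Note finally that the boundary case $\norm{\Gamma_\infty A\Gamma_\infty}_{2\to2}=1$ is genuinely excluded, since then $\Sigma_\infty$ is undefined.
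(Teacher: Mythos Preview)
Your proposal is correct and uses the same underlying Hubbard--Stratonovich idea as the paper, but the implementation is genuinely different. The paper diagonalizes $\Gamma_n A \Gamma_n = V_n^T \Lambda_n V_n$, works with $\widehat{w}^{(n)} = V_n \widehat{m}^{(n)}$, and takes the ``convolution'' route: it adds an independent Gaussian $Y_n \sim \mathcal{N}(0,\Lambda_n^{-1})$, computes the Lebesgue density of $\widehat{w}^{(n)}+Y_n$ explicitly, shows pointwise convergence of that density to the Gaussian density with covariance $(\Lambda_\infty-\Lambda_\infty^2)^{-1}$ via the same three-region decomposition you anticipate, and then deconvolves and undoes the rotation. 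Your version instead keeps $A$ undiagonalized, uses the mixture/disintegration reading of Hubbard--Stratonovich, and splits the limit into a Laplace step for the auxiliary field $s$ (giving $\mathcal{N}(0,(A^{-1}-\Gamma_\infty^2)^{-1})$) plus a conditional Lindeberg CLT for the spins (giving an independent $\mathcal{N}(0,\Id)$), then recombines via the neat identity $\Id+\Gamma_\infty(A^{-1}-\Gamma_\infty^2)^{-1}\Gamma_\infty=(\Id-\Gamma_\infty A\Gamma_\infty)^{-1}$. What the paper's route buys is economy: there is no conditional CLT and no joint-convergence issue, since the convolved density is computed exactly and one only needs dominated convergence on the three regions. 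What your route buys is probabilistic transparency: the limiting covariance decomposes visibly into the ``field fluctuation'' and ``spin fluctuation'' pieces, and the strict convexity of $F_\infty$ (equivalent to $A^{-1}\succ\Gamma_\infty^2$) makes the Laplace localization immediate without first passing to eigencoordinates. The obstacle you flag --- uniform-in-$n$ control of the tails and making the joint convergence of conditional mean and conditional fluctuation rigorous --- is real but standard; running the whole argument through characteristic functions, as you suggest at the end, is indeed the cleanest way to close it and is morally equivalent to the paper's deconvolution step.
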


\begin{figure}[!ht]
\centering
	\begin{subfigure}[c]{0.49\textwidth}
	\includegraphics[width=\textwidth]{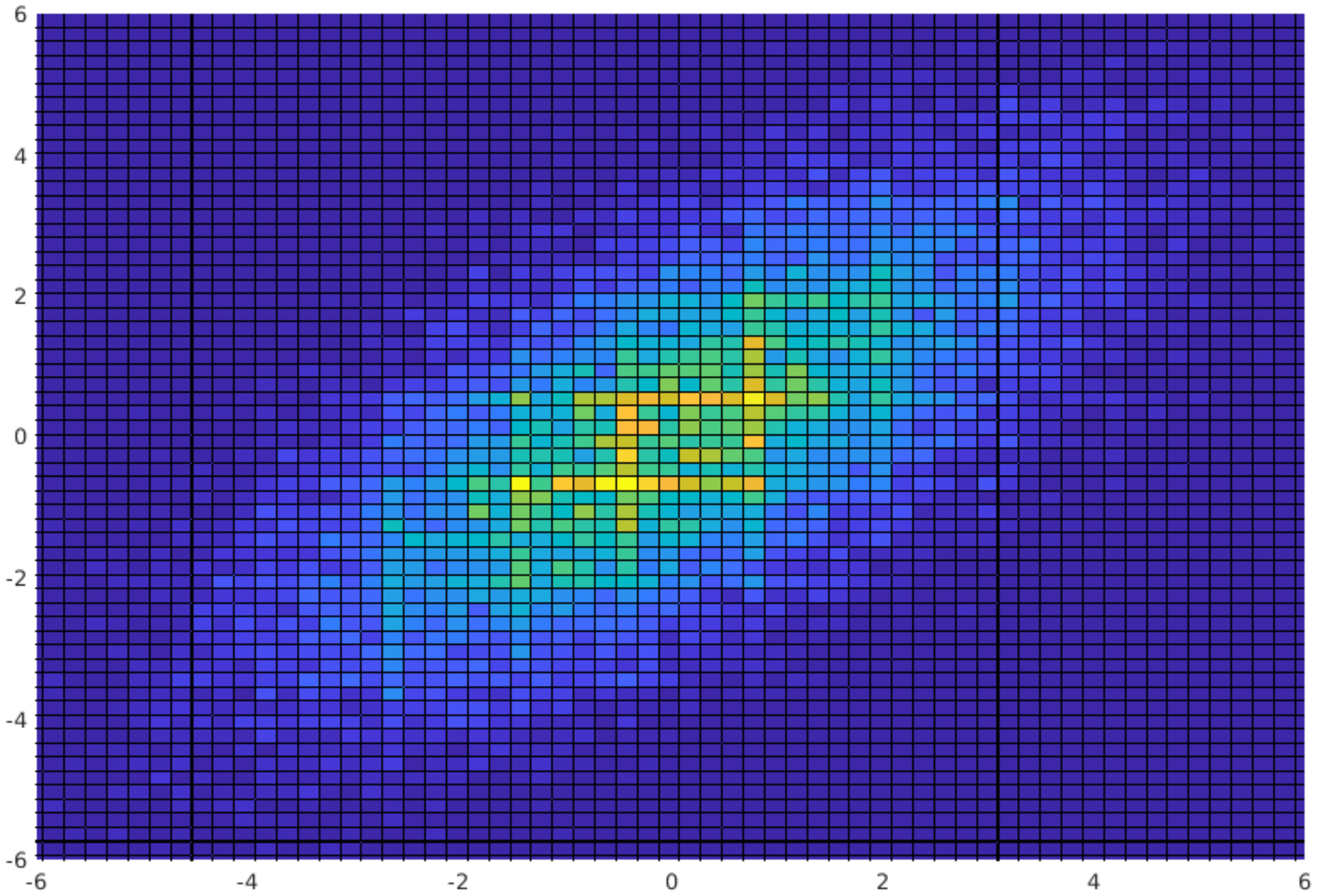}
	\end{subfigure}
	\begin{subfigure}[c]{0.49\textwidth}
	\includegraphics[width=\textwidth]{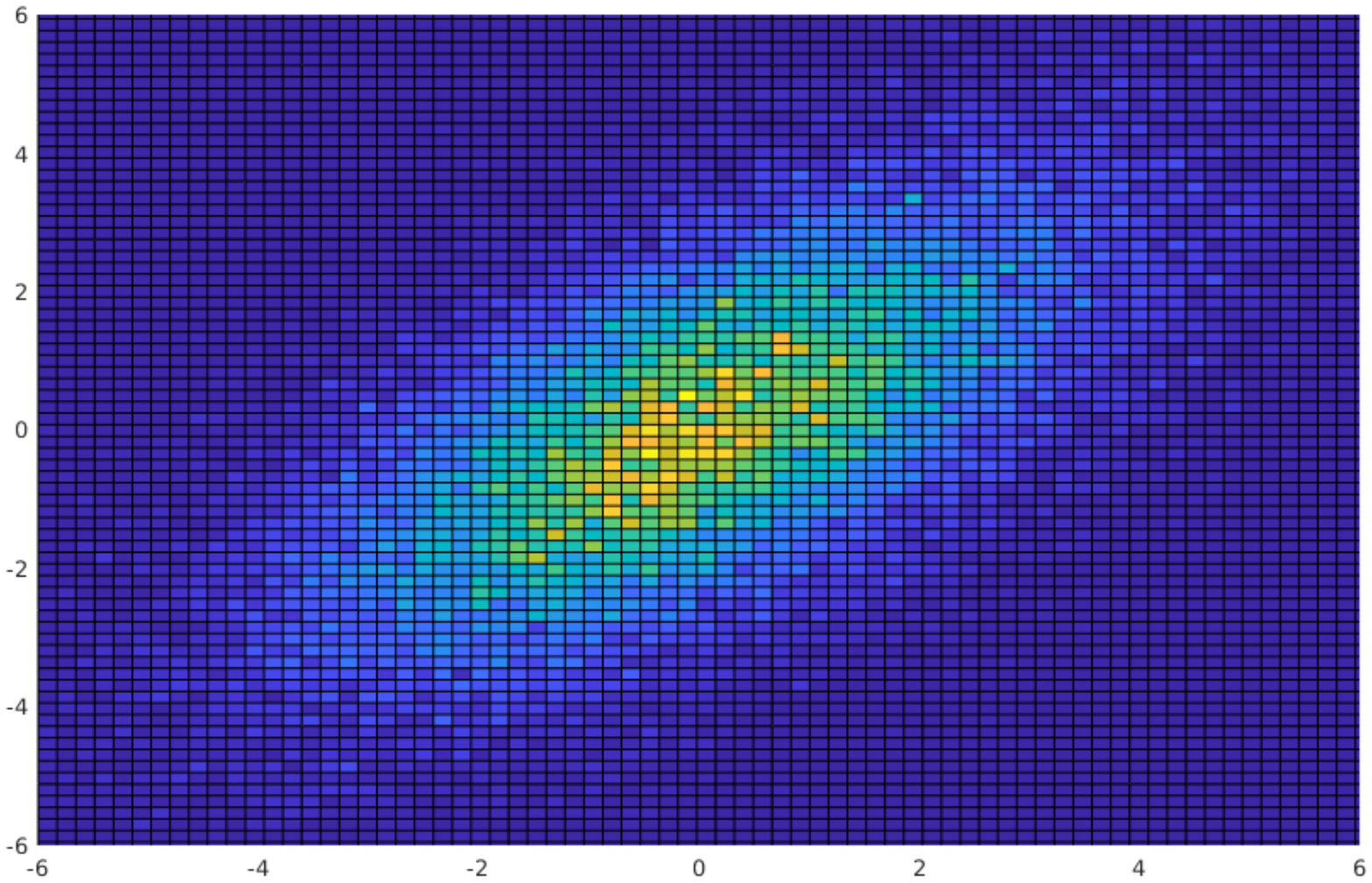}
	\end{subfigure}
  \caption[A visualization of the block magnetization vector, using the Glauber dynamic for sampling, and a heat map for the limiting normal distribution.]{A visualization of the block magnetization vector $\widehat{m}^{(n)}$ (left) for $n = 500$, using the Glauber dynamic for sampling, and a heat map for the limiting normal distribution. Here, we choose $k = 2$, $A = \begin{pmatrix} 1.1 & 0.6 \\ 0.6 & 1.1 \end{pmatrix}$ and the uniform case.}
\end{figure}

Note that $\Sigma_{\infty}$ exists, and it can be expanded into a Neumann series. Moreover, if $\Gamma_\infty A \Gamma_\infty = V^T \Lambda V$ is an orthogonal decomposition, then $\Sigma_\infty = V^T \diag((1 - \lambda_i)^{-1}) V$. Again, a similar statement is derived in \cite{KT18} using the method of moments.

Furthermore, we can treat the critical case. In the Curie--Weiss model, for $\beta = 1$, the quantity $N^{-3/4} \sum_{i = 1}^N \sigma_i$ converges weakly to a measure with Lebesgue-density $g_1(x) \coloneqq Z^{-1} \exp\left( - \frac{x^4}{12} \right)$ (see e.g.~\cite[Theorem V.9.5]{Ell06}). As proven in \cite{LS18} and \cite{FU12} a similar statement holds true for the vector of magnetizations in the case of $k=2$ blocks. The next theorem gives a further generalization of this fact in the case $k \ge 2$. Moreover, it shows that statistics associated to the orthogonal decomposition of the block interaction matrix give rise to $k$ asymptotically independent random variables with either a Gaussian distribution or a distribution with a Lebesgue-density $g_1$. \par
In the multidimensional critical case $\norm{\Gamma_\infty A \Gamma_\infty}_{2 \to 2} = 1$ we restrict to the uniform case with a simple eigenvalue $\lambda_k = k$, i.e.~we have $A = V^T \diag(\lambda_1, \ldots, \lambda_{k-1}, k) V$. Let $	\Gamma_n A \Gamma_n = V^T_n \Lambda_n V_n,$ be the orthogonal decomposition, where $V_n$ is a unitary $k \times k$-matrix and $\Lambda_n$ a diagonal $k\times k$-matrix. If we define the normalized vector
\begin{equation*}
	w' = w'_n \coloneqq  \diag(N^{-1/2}, \ldots, N^{-1/2}, N^{-3/4}) V m^{(n)}
\end{equation*}
and the matrix
\begin{equation*}
	\hat{C}_N \coloneqq \diag(\lambda_1, \ldots, \lambda_{k-1}, kN^{1/2}),
\end{equation*}
we have the following result.

\begin{thm}		\label{theorem:CLTcriticalcase}
Under the above assumptions let $Y_n \sim \mathcal{N}(0, \hat{C}_N^{-1})$ and $X_n \sim \mu_{J_n}$ be independent random variables, defined on a common probability space. Then $w'_n(X_n) + Y_n$ converges in distribution to a probability measure with density
\begin{equation}\label{limdens}
\tilde g_k(x) \coloneqq \tilde{Z}^{-1} \exp\left( - \frac{1}{2} \sum_{i = 1}^{k-1} \left( \lambda_i - \frac{\lambda_i^2}{k} \right) x_i^2 - \frac{k^3}{12} x_k^4 \sum_{i = 1}^k V_{ki}^4 \right)
\end{equation}
for a suitable normalization $\tilde Z$ that makes the expression \eqref{limdens} a probability density.\par 
Thus, the vector $(w_n'(X_n)_j)_{j = 1,\ldots, k-1}$ converges to a normal distribution with covariance matrix $\Sigma = \diag\left( (k-\lambda_j)^{-1}\right)$ and the random variable $w_n'(X_n)_k$ converges to a distribution with Lebesgue-density $Z^{-1} \exp \left( - (\frac{k^3}{12} \sum_{i = 1}^k V_{ki}^4) x^4 \right) dx$.
\end{thm}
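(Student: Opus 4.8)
The plan is to compute the limiting distribution of $w'_n(X_n) + Y_n$ directly via its characteristic function (or Laplace transform), exploiting the fact that the Gaussian random variable $Y_n$ is precisely tuned to perform a Hubbard--Stratonovich transformation. First I would recall the standard Hubbard--Stratonovich identity: if $X_n \sim \mu_{J_n}$ and, conditionally on $X_n$, $Z_n$ is Gaussian with the appropriate covariance, then the sum $X_n$-contribution plus Gaussian-contribution has a density proportional to $\exp(-\Phi_n(x))$ for an explicit function $\Phi_n$ built from the partition function and the matrix $\Gamma_n A \Gamma_n$. Concretely, writing $\Gamma_n A \Gamma_n = V_n^T \Lambda_n V_n$, the change of variables to the eigenbasis decouples the $k$ coordinates up to the error terms coming from $V_n \to V$ and $\Lambda_n \to \Lambda = \diag(\lambda_1,\dots,\lambda_{k-1},k)$. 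In the $k-1$ subcritical directions the eigenvalue $\lambda_j < k$ gives, after the $N^{-1/2}$ scaling, a Gaussian limit with variance $(k - \lambda_j)^{-1}$ — exactly as in the proof of Theorem \ref{theorem:CLTgeneralCase}. In the critical direction the eigenvalue equals $k$, the quadratic term cancels, and one must expand to fourth order; the $N^{-3/4}$ scaling in the definition of $w'$ is chosen so that the quartic term survives in the limit. This is the one-dimensional critical Curie--Weiss computation (cf. \cite[Theorem V.9.5]{Ell06}), adapted by the presence of the eigenvector entries $V_{ki}$, which is where the coefficient $\frac{k^3}{12}\sum_{i=1}^k V_{ki}^4$ comes from.

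The key steps, in order, are: (1) Apply the Hubbard--Stratonovich transformation to $w'_n(X_n) + Y_n$, so that its density is $\tilde Z^{-1} \exp(-F_n(x))$ with $F_n$ expressed through $\sum_{j\in B_i^{(n)}}$-sums and the log-moment-generating function $L(t) = \log\cosh(t)$ of a single spin; here the choice $Y_n \sim \mathcal{N}(0,\hat C_N^{-1})$ with $\hat C_N = \diag(\lambda_1,\dots,\lambda_{k-1}, kN^{1/2})$ is what makes the auxiliary Gaussian cancel the correct quadratic forms. (2) Pass to the eigenbasis via $V$ (not $V_n$), absorbing the discrepancy $V_n - V = o(1)$ and $\Lambda_n - \Lambda = O(N^{-1/2})$ (or whatever rate the block-size convergence gives) into vanishing error terms — this uses the uniform-case hypothesis and the simplicity of $\lambda_k = k$. (3) In each of the first $k-1$ coordinates, Taylor-expand $L$ to second order: $L(t) = \frac{t^2}{2} + O(t^4)$, and check that the surviving quadratic coefficient is $\frac12(\lambda_j - \lambda_j^2/k)$, matching \eqref{limdens}; this is a direct consequence of the subcriticality $\lambda_j < k$. (4) In the $k$-th coordinate, Taylor-expand $L$ to fourth order, $L(t) = \frac{t^2}{2} - \frac{t^4}{12} + O(t^6)$, track the scaling $N^{-3/4}$ through the sum over $i$ with weights $V_{ki}$, and verify that the quadratic term cancels (because $\lambda_k = k$ exactly) while the quartic term converges to $\frac{k^3}{12}\bigl(\sum_{i=1}^k V_{ki}^4\bigr) x_k^4$. (5) Invoke a uniform-integrability / tightness argument (e.g. dominating $\exp(-F_n)$ by an integrable function uniformly in $n$, as is standard in these Hubbard--Stratonovich proofs) to upgrade pointwise convergence of densities to convergence in distribution, and to identify $\tilde Z$ as the normalizing constant of the limit. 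The final assertion about the marginals then follows because $\tilde g_k$ factorizes as a product of $k-1$ Gaussians and one quartic density.

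The main obstacle I expect is step (4) combined with the cross-terms in step (2): because the scalings in $w'$ are anisotropic ($N^{-1/2}$ in $k-1$ directions, $N^{-3/4}$ in one), the off-diagonal entries of $V_n^T \Lambda_n V_n$ relative to $V^T \Lambda V$ couple the slow critical coordinate to the fast subcritical ones, and one must show these mixed contributions vanish in the limit rather than generating spurious terms; getting the bookkeeping of powers of $N$ right here — and confirming that no term of intermediate order between $N^{-1/2}$ and $N^{-3/4}$ is accidentally produced by the $\Lambda_n - \Lambda$ error — is the delicate part. A secondary technical point is establishing the uniform integrability needed in step (5) in the critical direction, where the effective potential is only quartic (not quadratic) so the usual Gaussian domination must be replaced by a quartic-tail bound valid uniformly in $n$; this mirrors the one-dimensional critical Curie--Weiss argument but requires care because of the eigenvector weights $V_{ki}$ and the coupling to the other coordinates.
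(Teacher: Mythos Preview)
Your proposal is correct and follows essentially the same route as the paper: Hubbard--Stratonovich to obtain an explicit density $\exp(-\Phi_N(x))$, a fourth-order Taylor expansion of $\log\cosh$ (second order suffices in the $k-1$ subcritical directions, fourth order is needed in the critical one), and a domination argument to pass to the limit, followed by deconvolution via characteristic functions. The paper implements your step~(5) concretely as a three-region decomposition (inner ball $B_R(0)$, intermediate annulus, and an outer region where a rescaled $\tilde\Phi_N$ is bounded away from its minimum), which is the standard way to justify the uniform integrability you invoke.

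One remark: your anticipated ``main obstacle'' concerning $V_n$ versus $V$ and the $\Lambda_n - \Lambda$ errors largely evaporates in the paper's setting, because the uniform-case hypothesis is used in the proof as if the block sizes were exactly $N/k$ (so $\Gamma_n = k^{-1/2}\Id$ and hence $V_n = V$, $\Lambda_n = k^{-1}\Lambda$ on the nose). Thus no anisotropic cross-terms arise and the bookkeeping you worry about is not needed; the delicate part is really just the intermediate-region bound in the critical direction, which the paper handles by the quartic lower bound $\norm{V^T\tilde x}_4^4 \ge C\norm{\tilde x}_4^4$.
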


We believe it is possible to extend Theorem \ref{theorem:CLTcriticalcase} to the case where the eigenvalue $k$ has multiplicity greater than $1$, by appropriately rescaling all the eigenvectors which belong to the eigenvalue $k$. 

Note that the parameter $\sigma^2 \coloneqq k^3/12 \sum_{i = 1}^k V_{ki}^4$ is directly related to the variance of a random variable with that distribution; indeed, a short calculation shows that for $X \sim \exp(-\sigma^2 x^4) dx$ we have $\mathrm{Var}(X) = c \sigma^{-1}$, where $c$ is an absolute constant. Moreover, $\sum_{i = 1}^k V_{ki}^4 = \norm{v_k}_4^4$, where $v_k$ is the eigenvector belonging to the eigenvalue~$k$.

In a final step, we establish convergence rates in the CLT in the high temperature case for a special class of functions. We use the exchangeable pair approach of Stein's method, that was also used in \cite{EL10} and \cite{CS11} in the case of the Curie--Weiss model. The proof of the next result will rely on a multivariate version of Stein's method proven in \cite{RR09}. To this end, define the function class
\begin{equation*}
\mathcal{F}_3 \coloneqq \left \lbrace h : \IR^k \to \IR : h \in \mathcal{C}^3(\IR^k), \max_{j = 1,2,3} \max_{\alpha = (\alpha_j)_j} \sup_{x \in \IR^k} \abs{\partial^\alpha h}(x) \le 1 \right \rbrace
\end{equation*}
of all three times differentiable functions with all partial derivatives (up to order three) bounded.

\begin{thm}			\label{theorem:CLTviaSteinsMethod}
Assume that $\norm{\Gamma_\infty A \Gamma_\infty}_{2 \to 2} < 1$ and for each $n \in \IN$ let $\Sigma_n\coloneqq \IE \hat{m}^{(n)} (\hat{m}^{(n)})^T$. For $Z \sim \mathcal{N}(0,\Id)$, we have
\begin{equation*}
\sup_{h \in \mathcal{F}_3} \big \lvert\IE_{\mu_{J_n}}\big( h\big(\hat{m}^{(n)}\big) \big) - \IE h(\Sigma_n^{1/2} Z) \big \rvert = O(N^{-1/2}).
\end{equation*}
\end{thm}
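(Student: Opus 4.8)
The plan is to apply the multivariate exchangeable-pair theorem of Reinert and R\"ollin \cite[Theorem 2.1]{RR09} to the vector $W \coloneqq \hat m^{(n)}(X_n)$, $X_n \sim \mu_{J_n}$. First I would construct the exchangeable partner by a single-site heat-bath step: draw $I$ uniformly on $\{1,\dots,N\}$ independently of $X_n$, leave all coordinates except $I$ unchanged, and resample $(X_n')_I$ from $\mu_{J_n}(\,\cdot \mid (X_n)_{-I})$, for which $\IE[(X_n')_j \mid (X_n)_{-j}] = \tanh\big((\tilde J_n X_n)_j\big)$. Since this step is reversible with respect to $\mu_{J_n}$, the pair $(W,W')$ with $W' \coloneqq \hat m^{(n)}(X_n')$ is exchangeable; moreover $\IE W = 0$ by the spin-flip symmetry of $\mu_{J_n}$, $\Sigma_n = \IE W W^T$ is positive definite for $n$ large (it converges to the $\Sigma_\infty$ of Theorem~\ref{theorem:CLTgeneralCase}), and for $j \in B_i^{(n)}$ one has $(\tilde J_n X_n)_j = |B_i^{(n)}|^{-1/2}(\Gamma_n A \Gamma_n \hat m^{(n)})_i - N^{-1}A_{ii}(X_n)_j$.

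For the linearity condition I would expand $\tanh$ to third order. Using $W_i'-W_i = |B_i^{(n)}|^{-1/2}\big((X_n')_I-(X_n)_I\big)$ on $\{I\in B_i^{(n)}\}$ and $\sum_{j\in B_i^{(n)}}(X_n)_j = |B_i^{(n)}|^{1/2}W_i$, a direct computation gives
\[
\IE[W' - W \mid W] = -\tfrac1N (\Id - \Gamma_n A \Gamma_n) W + R,
\]
where the remainder $R$ collects the cubic Taylor term, of order $N^{-1}|B_i^{(n)}|^{-1}(\Gamma_n A \Gamma_n \hat m^{(n)})_i^3$, and the self-interaction correction, of order $N^{-2}\hat m^{(n)}_i$. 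Thus the linearity matrix is $\Lambda \coloneqq \tfrac1N(\Id - \Gamma_n A \Gamma_n)$; it is symmetric and, since $\norm{\Gamma_\infty A \Gamma_\infty}_{2\to 2}<1$ forces $\norm{\Gamma_n A \Gamma_n}_{2\to 2}<1$ for $n$ large, positive definite with $\norm{\Lambda^{-1}}_{2\to 2}=O(N)$, so that $\lambda^{(i)}\coloneqq\sum_m\abs{(\Lambda^{-1})_{im}}=O(N)$. Because a single coordinate is resampled, the conditional second-moment matrix $\widehat\alpha\coloneqq\IE[(W'-W)(W'-W)^T\mid W]$ is diagonal; using the identity $1-(X_n)_j\tanh((\tilde J_n X_n)_j)=1-\tanh\big((X_n)_j(\tilde J_n X_n)_j\big)$ and expanding, one finds $\widehat\alpha = \tfrac2N\Id + E$ with $E$ of order $N^{-2}(1+\norm{\hat m^{(n)}}^2)$ entrywise. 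On the other hand, the exchangeability identity $\IE[(W'-W)(W'-W)^T]=\Lambda\Sigma_n+\Sigma_n\Lambda-\IE[WR^T]-\IE[RW^T]$ together with the bound on $R$ forces $\Sigma_n=(\Id-\Gamma_n A\Gamma_n)^{-1}+O(N^{-1})$, hence $2\Lambda\Sigma_n=\tfrac2N\Id+O(N^{-2})$ and $\widehat\alpha-2\Lambda\Sigma_n$ is small. Finally the increments are bounded, $\abs{W_i'-W_i}\le 2|B_i^{(n)}|^{-1/2}$, whence $\IE\abs{W_i'-W_i}^3=O(N^{-3/2})$.

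Feeding $\Lambda$, $\Sigma_n$, $R$ and these bounds into \cite[Theorem 2.1]{RR09}, and using $\abs{h}_1,\abs{h}_2,\abs{h}_3\le 1$ for $h\in\mathcal{F}_3$ together with $\norm{\Sigma_n}^{1/2}=O(1)$ and $k$ fixed, the error splits into three parts: the remainder part, of order $\big(\sum_i(\lambda^{(i)})^2\IE R_i^2\big)^{1/2}=O(N\cdot N^{-2})=O(N^{-1})$; the second-moment part, of order $\sum_{i,j}\lambda^{(i)}\big(\IE\abs{\widehat\alpha_{ij}-2(\Lambda\Sigma_n)_{ij}}^2\big)^{1/2}=O(N\cdot N^{-2})=O(N^{-1})$; and the third-order Stein part, which here reduces to $\sum_{i=1}^k\lambda^{(i)}\,\IE\abs{W_i'-W_i}^3$ since at most one block changes per step, hence $O(N\cdot N^{-3/2})=O(N^{-1/2})$. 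The last contribution dominates and gives the claimed rate $O(N^{-1/2})$.

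The main obstacle, besides the bookkeeping of the various $O(N^{-2})$ error terms against the $O(N)$ factors carried by $\Lambda^{-1}$, is to secure the moment bounds $\sup_n\IE_{\mu_{J_n}}\norm{\hat m^{(n)}}^p<\infty$ for $p\le 6$ that underlie the estimates on $\IE\norm{R}^2$ and on $E$. I would obtain these from Gaussian concentration of $\hat m^{(n)}$ in the high-temperature regime --- each $\hat m^{(n)}_i$ is $2|B_i^{(n)}|^{-1/2}$-Lipschitz in the Hamming metric and $\mu_{J_n}$ satisfies a logarithmic Sobolev inequality with a constant bounded uniformly in $n$ when $\norm{\Gamma_\infty A\Gamma_\infty}_{2\to 2}<1$ --- or, alternatively, from the exponential tail bound of Corollary~\ref{corollary:hightemperature} combined with the quadratic behaviour of $I$ at its minimum $0$.
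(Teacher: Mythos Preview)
Your proposal is correct and follows the same overall architecture as the paper: construct the exchangeable pair via a single Glauber step, verify the approximate linear regression $\IE[W'-W\mid\mathcal F]=-\tfrac1N(\Id-\Gamma_nA\Gamma_n)W+R$, apply \cite[Theorem 2.1]{RR09}, and control the three error terms using moment bounds that ultimately rest on the uniform logarithmic Sobolev inequality for $\mu_{J_n}$ in the regime $\norm{\Gamma_\infty A\Gamma_\infty}_{2\to 2}<1$. The dominant contribution is the third-moment term $E_2=O(N^{-3/2})$, which against $\lambda^{(i)}=O(N)$ yields the rate $O(N^{-1/2})$, exactly as in the paper.

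The one substantive difference is your treatment of the conditional second-moment term $E_1$. The paper writes $\IE[(\hat m_i-\hat m_i')^2\mid\mathcal F]=2N^{-1}-2N^{-1}\abs{B_i}^{-1/2}f_i(X)$ with $f_i(X)=\abs{B_i}^{-1/2}\sum_{j\in B_i}X_j\tanh(g_j(X))$ and bounds $\Var(f_i)=O(1)$ directly by the Poincar\'e inequality, obtaining $E_1=O(N^{-3/2})$. You instead Taylor-expand $\tanh$ inside $f_i$, reduce everything to polynomial moments of $\hat m^{(n)}$, and supplement this with the exchangeability identity $\IE\widehat\alpha=\Lambda\Sigma_n+\Sigma_n\Lambda-\IE[RW^T]-\IE[WR^T]$ to pin down $2\Lambda\Sigma_n=\tfrac2N\Id+O(N^{-2})$; this route gives the sharper estimate $E_1=O(N^{-2})$, and as a byproduct the quantitative statement $\Sigma_n=(\Id-\Gamma_nA\Gamma_n)^{-1}+O(N^{-1})$ that the paper only raises informally in Section~\ref{section:Discussion}. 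Either argument suffices since $E_2$ dominates. Your fallback suggestion of extracting moment bounds from Corollary~\ref{corollary:hightemperature} would require making the constant $I_\varepsilon\sim c\varepsilon^2$ quantitative; the LSI route (which is what the paper uses, via Proposition~\ref{proposition:LSIforIsingModels}) is cleaner and is the one to keep.
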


\section{Proofs of the large deviation results and the mean-field equations}	\label{section:ProofsLDP}
Let us start off by proving the LDP result for the rescaled block magnetization vector $\tilde{m}^{(n)}$.
Recall the notion of an LDP (for which we also refer to \cite{dH00} and \cite{DZ10}): If $\mathcal{X}$ is a Polish space and $(a_n)_{n \in \IN}$ is an increasing sequence of non-negative real numbers, we say that a sequence of probability measures $(\nu_n)_n$ on $\mathcal{X}$ satisfies a \emph{large deviation principle} with speed $a_n$ and rate function $I: \mathcal{X} \to \IR$ (i.e.~a lower semi-continuous function with compact level sets $\{x: I(x) \le L\}$ for all $L>0$), if for all Borel sets $B \in \mathcal{B}(\mathcal{X})$ we have
\begin{equation*}
	-\inf_{x \in \mathrm{int}(B)} I(x) \le \liminf_{n \to \infty} \frac{\log \nu_n(B)}{a_n} \le \limsup_{n \to \infty} \frac{\log \nu_n(B)}{a_n} \le -\inf_{x \in \mathrm{cl}(B)} I(x),
\end{equation*}
where $\mathrm{int}(B)$ and $\mathrm{cl}(B)$ denote the topological interior and closure of a set $B$, respectively.

We say that a sequence of random variables $X_n: \Omega \to \mathcal{X}$ satisfies an LDP with speed $a_n$ and rate function $I: \mathcal{X} \to \IR$ under a sequence of measures $\mu_n$ if the push-forward sequence $\nu_n \coloneqq \mu_n \circ X_n$ satisfies an LDP with speed $a_n$ and rate function $I$. \par

To prove Theorem \ref{theorem:LDPgeneralcase}, we will need the following lemma.

\begin{lemma}		\label{lemma:LDPwithSmallPerturbation}
	Let $\mathcal{X}$ be a Polish space and assume that a sequence of measures $(\mu_n)_{n \in \IN}$ on $\mathcal{X}$ satisfies an LDP with speed $n$ and rate function $I$. Let $F: \mathcal{X} \to \IR$ be a continuous function which is bounded from above and $\eta_n: \mathcal{X} \to \IR$ a sequence of functions such that $\norm{\eta_n}_{L^\infty(\mu_n)} \to 0$. Then the sequence of measures
	\[
		d\tilde{\mu}_n= \exp(nF + n\eta_n) d\mu_n
	\]
	satisfies an LDP with speed $n$ and rate function
	\[
		J(x) = \sup_{\lambda \in \mathcal{X}} \left( F(\lambda) - I(\lambda) \right) - (F(x) - I(x)).
	\]
\end{lemma}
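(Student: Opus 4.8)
The plan is to use the Varadhan-type / tilted-LDP machinery. Since $d\tilde\mu_n = \exp(nF + n\eta_n)\,d\mu_n$ is (up to a normalizing constant) a Gibbs-type transformation of $\mu_n$, the natural tool is the Laplace principle combined with the fact that $\eta_n$ is a negligible perturbation in the sup-norm. Concretely, I would proceed in three steps.

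\textbf{Step 1: Absorb the perturbation.} Since $\norm{\eta_n}_{L^\infty(\mu_n)} \to 0$, for every bounded continuous test functional $G$ on $\mathcal{X}$ we have
\[
\Big| \frac{1}{n}\log \int e^{n G} \, d\tilde\mu_n - \frac{1}{n}\log \int e^{n G + nF}\, d\mu_n \Big| \le \norm{\eta_n}_{L^\infty(\mu_n)} \to 0,
\]
because multiplying the integrand by $e^{n\eta_n}$ changes it by a factor in $[e^{-n\norm{\eta_n}_\infty}, e^{n\norm{\eta_n}_\infty}]$. So $\tilde\mu_n$ and the ``clean'' tilted measure $d\mu_n' \coloneqq e^{nF}\,d\mu_n$ are exponentially equivalent in the sense that they have the same limiting logarithmic moment generating functionals, hence (once one is established) the same LDP. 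This reduces the problem to the case $\eta_n \equiv 0$.

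\textbf{Step 2: Apply Varadhan's lemma to the clean tilt.} For the measure $d\mu_n' = e^{nF}\,d\mu_n$ with $\mu_n$ satisfying an LDP with speed $n$ and good rate function $I$, and $F$ continuous and bounded above, Varadhan's integral lemma applies directly. The normalization constant satisfies $\frac{1}{n}\log\mu_n'(\mathcal{X}) = \frac{1}{n}\log\int e^{nF}\,d\mu_n \to \sup_{\lambda}(F(\lambda) - I(\lambda)) =: C$. More generally, for a bounded continuous $G$, Varadhan gives $\frac{1}{n}\log\int e^{nG}\,d\mu_n' = \frac{1}{n}\log\int e^{n(F+G)}\,d\mu_n \to \sup_\lambda(F(\lambda) + G(\lambda) - I(\lambda))$. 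Thus the normalized probability measures $\bar\mu_n' \coloneqq \mu_n'/\mu_n'(\mathcal{X})$ satisfy a Laplace principle with rate function $J(x) = (C) - (F(x)-I(x)) = \sup_\lambda(F(\lambda)-I(\lambda)) - (F(x) - I(x))$. Since $\mathcal{X}$ is Polish and $I$ is a good rate function, $J$ is also lower semicontinuous with compact level sets (it differs from $-F+I$ by a constant, and $F$ is continuous and bounded above, so $J$ inherits goodness from $I$ — here one checks that $\{J \le L\} = \{I - F \le L - C\}$ is closed and, using boundedness of $F$ from above, contained in a level set of $I$, hence compact). By the equivalence of the Laplace principle and the LDP on Polish spaces (Bryc's inverse Varadhan lemma), $\bar\mu_n'$ satisfies a full LDP with rate function $J$.

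\textbf{Step 3: Transfer back and deal with normalization of $\tilde\mu_n$.} Combine Steps 1 and 2: the statement of the lemma presumably refers to $\tilde\mu_n$ after normalization to a probability measure (or the conclusion is understood up to the exponentially-trivial normalization constant, which does not affect the rate function). Either way, $\tilde\mu_n$ and $\bar\mu_n'$ are exponentially equivalent by Step 1, so $\tilde\mu_n$ (normalized) satisfies the LDP with rate function $J$.

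\textbf{Main obstacle.} The only genuinely delicate point is verifying that $J$ is a good rate function — i.e.\ that its level sets are compact, not merely closed. This is where boundedness of $F$ from above is essential: without it, the tilted measure could fail to be tight and the Laplace-to-LDP upgrade would break. The argument is that $\{x : J(x)\le L\} = \{x : I(x) - F(x) \le L - C\} \subseteq \{x : I(x) \le L - C + \sup F\}$, which is compact since $I$ is good, and the set is closed since $I-F$ is lower semicontinuous; a closed subset of a compact set is compact. Everything else is a routine invocation of Varadhan's lemma, its converse, and the exponential-equivalence principle, all of which are standard (see \cite{DZ10}).
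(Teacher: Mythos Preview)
Your proposal is correct and follows essentially the same approach as the paper: invoke the tilted LDP (via Varadhan's lemma) for the clean measure $e^{nF}\,d\mu_n$, then use the smallness of $\norm{\eta_n}_{L^\infty(\mu_n)}$ to transfer the LDP to $\tilde\mu_n$. The only cosmetic difference is that the paper sandwiches $\tilde\mu_n(B)$ directly between $e^{\pm 2n\norm{\eta_n}_\infty}\nu_n(B)$ on Borel sets rather than passing through Laplace functionals and Bryc's inverse lemma, and it cites the tilted LDP as a black box from \cite{dH00} rather than re-deriving it and checking goodness of $J$ as you do.
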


\begin{proof}
Note that this is a slight modification of the {\it tilted LDP}, which is an immediate consequence of Varadhan's Lemma (\cite[Theorem III.17]{dH00}). Indeed, according to this tilted LDP, the sequence of measures $(\nu_n)_n$ with $\mu_n$-density $\exp(nF)$ satisfies an LDP with speed $n$ and rate function $J$. Since for any $n \in \IN$ and any $B \in \mathcal{B}(\mathcal{X})$ the inequalities
	\[
		e^{-2n\norm{\eta_n}_{L^{\infty}(\mu_n)}} \nu_n(B) \le \tilde{\mu}_n(B) \le e^{2n \norm{\eta_n}_{L^\infty(\mu_n)}} \nu_n(B)
	\]
hold, this easily implies an LDP for $(\tilde{\mu}_n)_n$ with speed $n$ and the same rate function $J$ due to $\norm{\eta_n}_{L^\infty(\mu_n)} \to 0$.
\end{proof}

\begin{proof}[Proof of Theorem \ref{theorem:LDPgeneralcase}]
First, note that under the uniform measure $\mu_0$ (i.e.~$A \equiv 0$) we have
\[
	\IE_{\mu_0} \exp\left( N \skal{t,\tilde{m}^{(n)}} \right) = \prod_{i = 1}^k \cosh\left(t_i \frac{N}{\abs{B_i^{(n)}}}\right)^{\abs{B_i^{(n)}}},
\]
so that
\[
	\lim_{N \to \infty} \frac{1}{N} \log \IE_{\mu_0} \exp\left( N \skal{t,\tilde{m}^{(n)}} \right) = \sum_{i = 1}^k \gamma_i^2 \log \cosh \left( \frac{t_i}{\gamma_i^2} \right).
\]
By the G\"artner-Ellis Theorem (\cite[Theorem 2.3.6]{DZ10}), $\tilde{m}^{(n)}$ satisfies an LDP under $\mu_0$ with speed $N$ and rate function
\[
	J_{\mu_0}(x) \coloneqq \sup_{t \in \IR^k} \left( \skal{t,x} - \sum_{i = 1}^k \gamma_i^2 \log \cosh\left( \frac{t_i}{\gamma_i^2} \right) \right) = \sum_{i = 1}^k \gamma_i^2 L^*(x_i),
\]
where $L^*(x)$
is the convex conjugate of $\log \cosh$. Next, it is easy to see that we can rewrite the $\mu_0$-density of $\mu_{J_n}$ as
\[
	\frac{d\mu_{J_n}}{d\mu_0}(x) = \exp \left( \frac{N}{2} \skal{(\Gamma_n^2 A \Gamma_n^2)\tilde{m}^{(n)},\tilde{m}^{(n)}} \right) = \exp \left( N F(\tilde{m}^{(n)}) - N \eta_n(\tilde{m}^{(n)}) \right),
\]
where
\begin{align*}
	F(x) &= \frac{1}{2} \skal{\Gamma_\infty^2 A \Gamma_\infty^2 x, x} = \frac{1}{2} \skal{\Gamma_\infty^2 A \Gamma_\infty^2 x, x} \wedge \frac{1}{2} k\norm{\Gamma_\infty^2 A \Gamma_\infty^2}_{2 \to 2}, \\
	\eta_n(x) &= \frac{1}{2} \skal{\Gamma_\infty^2 A \Gamma_\infty^2 x,x} - \frac{1}{2} \skal{\Gamma_n^2 A \Gamma_n^2 x,x}.
\end{align*}
Note that we artificially inserted the truncation in $F$ to emphasize the boundedness of $F(\tilde{m}^{(n)})$. This does not affect the quadratic form, as
\[
\left|\frac{1}{2} \skal*{\Gamma_\infty^2 A \Gamma_\infty^2 \tilde{m}^{(n)}, \tilde{m}^{(n)}}\right| \leq \frac{1}{2} \norm{\Gamma_\infty^2 A \Gamma_\infty^2}_{2 \to 2} \norm{\tilde{m}^{(n)}}_2^2 \leq \frac{k}{2} \norm{\Gamma_\infty^2 A \Gamma_\infty^2}_{2 \to 2}.
\]
Moreover, $F$ is obviously continuous and $\eta_n$ satisfies
\begin{align*}
	\norm{\eta_n}_\infty \le k \norm{\Gamma_\infty^2 A \Gamma_\infty^2 - \Gamma_n^2 A \Gamma_n^2} \to 0
\end{align*}
on the support of $\mu_0 \circ \tilde{m} = [-1,1]^k$, so that the assertion follows from Lemma \ref{lemma:LDPwithSmallPerturbation}.
\end{proof}

\subsection{The mean-field equations}
Theorem \ref{theorem:LDPgeneralcase} states that the function
\[
I(x) = \frac{1}{2} \skal*{x, \Gamma_\infty^2 A \Gamma_\infty^2 x} - \sum_{i = 1}^k \gamma_i^2 L^*(x_i)
\]
determines the asymptotic behavior of the magnetization, and thus the critical points of $I$ are of utter importance. These satisfy the so-called \emph{mean-field equations}
\begin{align}		\label{eqn:meanfield}
\begin{split}
x_1 &= \tanh((A \Gamma_\infty^2 x)_1) = \tanh\bigg(\sum_{j = 1}^k A_{1j} \gamma_j^2 x_j\bigg) \\
\vdots &\quad \vdots \quad \vdots \\
x_k &= \tanh((A \Gamma_\infty^2 x)_k) = \tanh\bigg(\sum_{j = 1}^k A_{kj} \gamma_j^2 x_j\bigg).
\end{split}
\end{align}
For example, in the well-studied case $k = 2$, choosing 
\[ 
	A = \begin{pmatrix} A_{11} & A_{12} \\ A_{12} & A_{22} \end{pmatrix} \quad \text{ and }\quad \Gamma_\infty^2 = \begin{pmatrix} \gamma & 0 \\ 0 & 1-\gamma \end{pmatrix}
\] 
for a positive definite matrix $A$ and $\gamma \in (0,1)$ equations   \eqref{eqn:meanfield} reduce to
\begin{align*}		 
\begin{split}
x_1 &= \tanh(\gamma A_{11} x_1 + (1-\gamma) A_{12} x_2), \\
x_2 &= \tanh(\gamma A_{12} x_1 + (1-\gamma) A_{22} x_2).
\end{split}
\end{align*}
Whereas for the two-dimensional fixed point problem the existence of a solution can be shown by monotonicity arguments, the existence of a solution to  \eqref{eqn:meanfield} for general $k$ is more involved. First off, we show that in the high temperature regime the only critical point of $I$ is $0$. This will immediately yield Corollary \ref{corollary:hightemperature}.

\begin{proof}[Proof of Corollary \ref{corollary:hightemperature}]
In the sense of the formulation in Corollary \ref{corollary:hightemperature}, $\tilde{m}^{(n)}$ concentrates exponentially fast in the minima of the function $J$. However, under the condition $\norm{\Gamma_\infty A \Gamma_\infty}_{2 \to 2} \le 1$ there is only one minimum, which is zero. To see this, note that any local minimum satisfies
\begin{align}	\label{eqn:nablazero}
	\nabla(J)(x) = - \Gamma_\infty^2 A \Gamma_\infty^2 x + \Gamma_\infty^2 \artanh(x) = 0.
\end{align}
Here, $\artanh(x)$ is understood componentwise. Clearly, $0$ is a solution, and due to
\begin{equation}\label{eqn:HessianInZero}
	\Hess\left(I\right)(0) = -\Gamma_\infty^2 A \Gamma_\infty^2 + \Gamma_\infty^2 = \Gamma_\infty \left( \Id - \Gamma_\infty A \Gamma_\infty \right) \Gamma_\infty \ge 0
\end{equation}
this is a local minimum. Assume there is some $y \neq 0$ solving \eqref{eqn:nablazero}, and observe that
\begin{align}\label{critpointdisc}
\norm{\Gamma_\infty^2 y}_2^2 &= \skal*{\Gamma_\infty^2 y, \Gamma_\infty^2 \artanh(y) + \left(\Id - \Gamma_\infty^2 A \right) \Gamma_\infty^2 y} \ge \skal{\Gamma_\infty^2 y, \Gamma_\infty^2 \artanh(y)} \nonumber \\
&= \sum_{i = 1}^k \gamma_i^4 \artanh(y_i) y_i \ge \sum_{i = 1}^k \gamma_i^4 y_i^2 = \norm{\Gamma_\infty^2 y}_2^2.
\end{align}
Here the first inequality follows from the general fact that the spectrum of the matrices $BC$ and $CB$ agree, applied to $B = \Gamma_\infty$ and $C = \Gamma_\infty A$. The last inequality follows from $\artanh(x)x \ge x^2$ for all $x \in (-1,1)$, with equality for $x = 0$ only. This means that for any solution $y$ we have equality in \eqref{critpointdisc}. However, equality can only hold if $y_i = 0$ whenever $\gamma_i \neq 0$. Due to our assumption $\gamma_i \in (0,1)$, this proves the claim.
\end{proof}

In contrast, in the low temperature regime, there are other solutions to the mean-field equations \eqref{eqn:meanfield}. Let us start with the following proposition showing the connection of the $k$-dimensional mean-field equations to the one-dimensional equations of the Curie--Weiss model. It provides an explicit formula for the solution of the $k$-dimensional problem in terms of the solution of the Curie--Weiss equation.

\begin{proposition}\label{lemma:solutionToMFE}
Let $k \in \IN$, $\Gamma_\infty = \frac{1}{\sqrt{k}} \Id$ and  $A$ be a positive semidefinite, symmetric matrix with $\norm{A}_{2 \to 2} > k$. If the eigenvector $v_k$ belonging to the largest eigenvalue $\lambda_k$ can be rescaled to satisfy $v_k \in \{-1,0,1\}^k$, then there exists a solution $x \neq 0$ to the mean-field equations \eqref{eqn:meanfield} and it is given by $x = m^* v_k$, where $m^*$ is the positive one-dimensional solution of the Curie--Weiss model with temperature $\beta = \lambda_k k^{-1} > 1$.
\end{proposition}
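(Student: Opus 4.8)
The plan is to verify directly that the proposed vector solves the system \eqref{eqn:meanfield}, exploiting the fact that in the uniform case the mean-field equations decouple into copies of the scalar Curie--Weiss equation along the distinguished eigendirection. Since $\Gamma_\infty = \frac{1}{\sqrt{k}} \Id$ we have $\Gamma_\infty^2 = \frac{1}{k}\Id$, so \eqref{eqn:meanfield} reads $x_i = \tanh\bigl(\tfrac{1}{k}(Ax)_i\bigr)$ for $i = 1,\dots,k$.

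First I would collect the relevant one-dimensional facts. Because $A$ is symmetric and positive semidefinite, $\lambda_k = \norm{A}_{2 \to 2} = \max_i \lambda_i$, and the hypothesis $\norm{A}_{2 \to 2} > k$ gives $\beta \coloneqq \lambda_k/k > 1$. For $\beta > 1$ the Curie--Weiss fixed point equation $m = \tanh(\beta m)$ has a unique solution $m^* \in (0,1)$: the map $m \mapsto \tanh(\beta m)$ is strictly concave on $[0,\infty)$ with derivative $\beta > 1$ at $0$ and value $\tanh\beta < 1$ at $m = 1$, so the intermediate value theorem yields a root in $(0,1)$ and concavity forbids a second positive root.

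Next I would substitute $x = m^* v_k$, with $v_k$ rescaled so that $v_k \in \{-1,0,1\}^k$. Since $A v_k = \lambda_k v_k$ we get $\tfrac{1}{k}(Ax)_i = \tfrac{\lambda_k}{k} m^* (v_k)_i = \beta m^* (v_k)_i$, so the $i$-th mean-field equation becomes the assertion $\tanh\bigl(\beta m^* (v_k)_i\bigr) = m^* (v_k)_i$. This is checked case by case on the value of $(v_k)_i \in \{-1,0,1\}$: it is trivial when $(v_k)_i = 0$; it is the definition of $m^*$ when $(v_k)_i = 1$; and it follows from the definition of $m^*$ together with the oddness of $\tanh$ when $(v_k)_i = -1$. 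Hence $x = m^* v_k$ solves \eqref{eqn:meanfield}, and $x \neq 0$ because $m^* > 0$ and $v_k$, being an eigenvector, is nonzero.

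The argument is short and essentially computational; there is no real obstacle beyond recognizing why the hypothesis on $v_k$ is exactly what is needed. The point is that the nonlinearity $\tanh$ acts componentwise, so a scalar multiple $m^* v_k$ of an eigenvector can be a fixed point only if applying $\tanh$ entrywise commutes with multiplication by the entries of $v_k$; on entries in $\{-1,0,1\}$ one has $\tanh(\beta m^* (v_k)_i) = (v_k)_i \tanh(\beta m^*)$ by oddness, which reduces the whole $k$-dimensional system to the one-dimensional Curie--Weiss equation. One could also remark that the same computation identifies $m^* v_k$ as the expected global maximizer of the rate function $I$ in the low temperature regime, but for the proposition itself only the verification above is required.
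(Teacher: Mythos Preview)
Your proof is correct and follows essentially the same route as the paper: both verify directly that $x = m^* v_k$ satisfies $\tanh\bigl(\tfrac{1}{k}Ax\bigr) = x$ by using $Av_k = \lambda_k v_k$ together with the fact that $v_k \in \{-1,0,1\}^k$ allows the componentwise $\tanh$ to commute with the entries of $v_k$. The only difference is that the paper, after establishing the critical point, goes on to show that the Hessian of $I$ at $m^* v_k$ is negative definite (so the solution is a local maximum of $I$); this is extra information not demanded by the proposition as stated, and you correctly note that it is not needed for the claim itself.
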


\begin{proof}
Let $m^* > 0$ be the unique positive solution of the Curie--Weiss equation $\tanh(\frac{\lambda_k}{k} x) = x$ for $\beta \coloneqq \frac{\lambda_k}{k} > 1$ and define $v \coloneqq m^* v_k$. We have
\[
	\tanh\bigg(\frac{1}{k} A v\bigg) = \tanh\bigg(\frac{m^*}{k} A v_k \bigg) = \tanh\bigg( \frac{m^* \lambda_k}{k} v_k \bigg) = \tanh\bigg( \frac{m^* \lambda_k}{k} \bigg) v_k = v,
\]
where in the second-to-last step we have used explicitly that $v_k \in \{ -1,0,1 \}^k$, and so $v$ is a critical point of $I$. Moreover, in this case it is easily seen that
\[
	\Hess\left( \frac{1}{2k^2} \skal{x,Ax} - \frac{1}{k} \sum_{i = 1}^k L^*(x_i) \right)(v) = \frac{1}{k^2}A - \frac{1}{k(1-(m^*)^2)} \Id
\]
is negative definite. Indeed, from
\[
	g(x) \coloneqq \frac{\artanh(x)}{x} - \frac{1}{1-x^2} = \sum_{k = 0}^\infty x^{2k}\left( \frac{1}{1+2k} - 1 \right) \le 0
\]
we obtain
\begin{align*}
	\skal*{y,\left( \frac{1}{k^2} \Lambda - \frac{1}{k(1-(m^*)^2)}\right)y } &= \frac{1}{k} \sum_{i = 1}^k y_i^2 \left( \frac{\lambda_i}{k} - \frac{1}{1-(m^*)^2} \right) \\ &\le \frac{1}{k} \sum_{i = 1}^k y_i^2 \left( \frac{\lambda_k}{k} - \frac{1}{1-(m^*)^2} \right) = \frac{g(m^*)}{k} \sum_{i = 1}^k y_i^2 \\ &\le 0.
\end{align*}
\end{proof}

\begin{exa}
Even though the assumptions in the previous proposition seem to be tailor-made for its proof (and the conclusion also holds true more generally), there are interesting non-trivial examples of a matrix satisfying the conditions of Proposition \ref{lemma:solutionToMFE}. One of them is the family of $k \times k$ matrices ($k \in \IN$) of the form
\begin{equation*}
  A(\alpha, \beta) = (\beta - \alpha) \Id + \alpha O(k,k)
\end{equation*}
for any parameters $(\alpha, \beta)$ satisfying
\begin{align}\label{eqn:condExampleMatrix}
\beta + (k-1) \alpha > k \quad \text{and} \quad \beta > \alpha.
\end{align}
This corresponds to $k$ groups with an interaction parameter $\beta$ within the group and $\alpha$ between the groups. For example, the condition \eqref{eqn:condExampleMatrix} is satisfied whenever $\beta > \alpha > 1$.
\end{exa}

In the general case, the conclusion of Proposition \ref{lemma:solutionToMFE} holds as well. In this case the proof relies on the fact that the continuous function $I$ has a global maximum on its (compact) domain $[-1,1]^k$, and the next lemma excludes maxima on the boundary. Hence there is always at least one solution $y \neq 0$ (since $0$ is either an inflection point or a minimum) to \eqref{eqn:meanfield}.

\begin{lemma}\label{lemma:prop_H}
Let $I$ be the large deviation rate function from Theorem \ref{theorem:LDPgeneralcase}, i.e. 
\[
	I(x) \coloneqq \frac{1}{2} \skal*{x,\Gamma_\infty^2 A \Gamma_\infty^2 x} - \sum_{i = 1}^k \gamma_i^2 L^*(x_i), \quad x \in[-1,1]^k
\]
and $L^*$ denotes the convex conjugate of $\log \cosh$. 
\begin{enumerate}
\item $I$ has no global maxima on the boundary of $[-1,1]^k$.
\item If $x \in [-1,1]^k$ satisfies the mean-field equations, we have
\begin{equation}	\label{eqn:DarstellungHimMaximum}
	I(x) = \frac{1}{2} \sum_{i = 1}^k \gamma_i^2 \left( x_i \artanh(x_i) + \log(1-x_i^2) \right).
\end{equation}
\item The set of all global maximisers has a positive distance from the boundary.
\end{enumerate}
\end{lemma}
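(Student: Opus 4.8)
The plan is to establish the three items in order; item (1) does all the real work, and (2) and (3) follow from it quickly. For (1), I argue by contradiction. Suppose $x^\ast$ is a global maximiser of $I$ lying on $\partial[-1,1]^k$, so that $\lvert x^\ast_j \rvert = 1$ for some coordinate $j$; by symmetry assume $x^\ast_j = 1$. Perturb this single coordinate inward, setting $x(t) \coloneqq x^\ast - t e_j$ for $t \in [0,\delta)$, which stays in $[-1,1]^k$ and leaves every other coordinate (possibly itself equal to $\pm 1$) unchanged. Since $(L^\ast)'(s) = \artanh(s)$ on $(-1,1)$ and $x \mapsto \tfrac12 \skal{x, \Gamma_\infty^2 A \Gamma_\infty^2 x}$ is smooth with gradient bounded on the compact cube, the map $t \mapsto I(x(t))$ is continuous on $[0,\delta)$, differentiable on $(0,\delta)$, and
\[
	\frac{d}{dt} I(x(t)) = -\bigl(\Gamma_\infty^2 A \Gamma_\infty^2 x(t)\bigr)_j + \gamma_j^2 \artanh(1-t).
\]
Here the first term is bounded in modulus by $\sqrt{k}\,\norm{\Gamma_\infty^2 A \Gamma_\infty^2}_{2 \to 2}$, while $\gamma_j^2 \artanh(1-t) \to +\infty$ as $t \downarrow 0$ because $\gamma_j > 0$. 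Hence the derivative is strictly positive for all sufficiently small $t > 0$, so $I(x(t)) > I(x^\ast)$, contradicting maximality. Thus no boundary point is a global maximiser.

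For (2), I substitute the mean-field relation into $I$. The equations \eqref{eqn:meanfield} say precisely that $(A \Gamma_\infty^2 x)_i = \artanh(x_i)$ for every $i$, hence $\skal{x, \Gamma_\infty^2 A \Gamma_\infty^2 x} = \sum_{i=1}^k \gamma_i^2 x_i \artanh(x_i)$. Combining this with the elementary identity $L^\ast(s) = s\,\artanh(s) + \tfrac12 \log(1-s^2)$, which is immediate from $L^\ast(s) = s\,\artanh(s) - \log\cosh(\artanh s)$ together with $\cosh(\artanh s) = (1-s^2)^{-1/2}$, and collecting terms in the definition of $I$ yields \eqref{eqn:DarstellungHimMaximum}.

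For (3), the set $M$ of global maximisers of $I$ is nonempty, since $I$ is continuous on the compact cube $[-1,1]^k$, and it is closed, hence compact; the boundary $\partial[-1,1]^k$ is likewise compact; and by (1) these two sets are disjoint. Any two disjoint nonempty compact subsets of $\IR^k$ have strictly positive Euclidean distance, which is exactly the claim. The step I expect to be the main obstacle is the boundary estimate in (1): because $\nabla I$ is singular on $\partial[-1,1]^k$, one should not analyse critical points there directly, but instead exploit that ruling out a boundary maximum only requires \emph{one} ascent direction, and that the blow-up of $\artanh$ near $\pm 1$ dominates the bounded contribution of the quadratic form.
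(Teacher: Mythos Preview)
Your proof is correct and, for parts (1) and (2), essentially identical to the paper's: the paper phrases (1) as a difference-quotient argument (divide $I(x^\ast)-I(\overline{x}^\ast_j,y_j)\ge 0$ by $1-y_j$ and let $y_j\uparrow 1$, using l'Hospital on the $L^\ast$ side), which is the same blow-up-of-$\artanh$ versus bounded-quadratic-form idea you implement via the one-sided derivative; part (2) is the same substitution and the same identity $L^\ast(s)=s\,\artanh(s)+\tfrac12\log(1-s^2)$.

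For part (3) you take a genuinely different route. The paper uses (2): since every global maximiser lies in the interior by (1), it satisfies the mean-field equations, so $I$ at a maximiser equals $\tfrac12\sum_i\gamma_i^2 R(x_i)$ with $R(s)=-(s\,\artanh s+\log(1-s^2))\ge 0$; if a sequence of maximisers approached the boundary, one $R(x_i)$ would diverge, contradicting the boundedness of $I$ on the cube. Your argument is purely topological: the maximiser set and $\partial[-1,1]^k$ are disjoint nonempty compacta by (1), hence at positive distance. Your version is shorter and decouples (3) from (2); the paper's version has the minor advantage of making the formula in (2) do some work, but at the cost of checking that $R\ge 0$ so the other terms cannot compensate.
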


\begin{proof}
$(1)$: Assume that $x$  is a global maximum of $I$ on the boundary. Then there is at least one index $j \in \{1,\ldots,k\}$ such that $x_j = 1$ (if $x_j = -1$, switch to $-x$ since $I(-x) = I(x)$). Rewriting the fact that $x$ is a maximum of $I$, we have for any $y_j \in [-1,1]$ and $C \coloneqq \Gamma_\infty^2 A \Gamma_\infty^2$
\begin{equation*}
\frac{1}{2} \left( \skal*{(\overline{x}_j,1), C(\overline{x}_j,1)} - \skal*{(\overline{x}_j,y_j), C (\overline{x}_j,y_j)} \right) \ge \gamma_j^2 (L^*(1) - L^*(y_j)),
\end{equation*}
where $\overline{x}_j\in\mathbb R^{k-1}$ is the vector obtained from $x$ by deleting the $j$-th component.
If we divide both sides by $1-y$ and let $\limsup_{y \to 1}$, the left hand side is finite, as $\frac{1}{2} \skal{x,Cx} \in C^\infty(\IR^k)$, and the right hand side tends to $\infty$ by l'Hospital's rule. This proves statement (1). \par
$(2)$: Clearly, $x$ can only satisfy the mean-field equations if $x \in (-1,+1)^k$. Since it solves the mean-field equations, for any $i = 1,\ldots,k$ we have
\begin{align*}
\artanh(x_i) = \sum_{j = 1}^k A_{ij} \gamma_j^2 x_j = (A \Gamma_\infty^2 x)_i.
\end{align*}
Inserting this into the function $I$ gives
\begin{align*}
I(x) &= \frac{1}{2} \sum_{i = 1}^k \gamma_i^2 x_i (\Gamma_\infty^2 A x)_i - \sum_{i = 1}^k \gamma_i^2 L^*(x_i) = \frac{1}{2} \sum_{i = 1}^k \gamma_i^2 (x_i \artanh(x_i) - 2L^*(x_i)) \\
&= - \frac{1}{2} \sum_{i = 1}^k \gamma_i^2 (x_i \artanh(x_i) + \log(1-x_i^2)) \\
&\eqqcolon \frac{1}{2} \sum_{i = 1}^k \gamma_i^2 R(x_i).
\end{align*}
$(3)$: The function $I$ is bounded in $[-1,1]^k$, as
\[
\abs{I(x)} \le 2 \log 2 + \frac{k}{2} \norm{\Gamma_\infty^2 A \Gamma_\infty^2}_{2 \to 2}.
\]
On the other hand, if there exists a sequence of maximisers approaching the boundary, i.e.~for at least one $i$ we have $x_i \to 1$, this gives $R(x_i) \to \infty$.
\end{proof}

In the case of two blocks, i.e.~$k=2$, equal block sizes and the same interaction within a group, the set of maximisers of the rate function is explicitly known. Indeed, in \cite[Proposition 4.1]{BRS17} and \cite[Theorem 2.1]{LS18} the authors show that for 
\[
A = \begin{pmatrix} \beta & \alpha \\ \alpha & \beta \end{pmatrix}
\]
satisfying $\beta \ge \alpha \ge 0$ and $\beta + \alpha > 2$ (the low temperature case) the distribution of $\tilde{m}^{(n)}$ concentrates in the two points $x = (m^+((\beta+\alpha)/2), m^+((\beta+\alpha)/2)$, and $-x$. In the case $\alpha < 0$ the limit points for $\tilde{m}^{(n)}$ become
$x = (m^+((\beta+\alpha)/2), -m^+((\beta+\alpha)/2))$, and $-x$. Here $m^+(b)$ is the largest solution to
$$
m=\tanh(bm).
$$
If $\beta + \abs{\alpha} \le 2$, the distribution of $\tilde{m}^{(n)}$ concentrates in the origin. 
For $k=2$, we can extend this result to arbitrary block sizes. 

\begin{proposition}\label{lemma:2BlocksNumberOfMaxima}
Let $k = 2$, $A = \begin{pmatrix} \beta & \alpha \\ \alpha & \beta \end{pmatrix}$ be a block interaction matrix and $\gamma_1^2=\gamma$ $\gamma_2^2 =(1-\gamma)$ for some $0 <\gamma< \frac{1}{2}$. In the low temperature case, if the groups are not interacting (i.e.\, $\alpha = 0$) there exists either two or four global maxima of $I$; for $\alpha \neq 0$, there are always two global maxima of $I$.
\end{proposition}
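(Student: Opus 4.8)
The plan is to study the critical points of $I$ directly via the mean-field equations, which for $k=2$ with $A = \begin{pmatrix} \beta & \alpha \\ \alpha & \beta \end{pmatrix}$ and $\Gamma_\infty^2 = \diag(\gamma, 1-\gamma)$ read
\begin{align*}
x_1 &= \tanh(\beta \gamma x_1 + \alpha (1-\gamma) x_2), \\
x_2 &= \tanh(\alpha \gamma x_1 + \beta (1-\gamma) x_2).
\end{align*}
By Lemma \ref{lemma:prop_H}, every global maximum of $I$ lies in the open square $(-1,1)^2$ and is a solution of this system, and conversely the global maxima are among the solutions; since $I(-x) = I(x)$, solutions come in pairs $\pm x$, so it suffices to count solutions (and to rule out that $0$ is a global maximum, which follows from the low-temperature assumption $\beta + \abs{\alpha} > 2$ together with the Hessian computation \eqref{eqn:HessianInZero}, whose relevant eigenvalue becomes negative). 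Throughout one may assume $\alpha \ge 0$: the substitution $x_2 \mapsto -x_2$ together with $\alpha \mapsto -\alpha$ is a bijection on the solution set preserving $I$, so the case $\alpha \le 0$ is equivalent.

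First I would treat the decoupled case $\alpha = 0$. Then the two equations separate into $x_1 = \tanh(\beta \gamma x_1)$ and $x_2 = \tanh(\beta(1-\gamma) x_2)$. Each one-dimensional equation $t = \tanh(bt)$ has the nonzero solutions $\pm m^+(b)$ precisely when $b > 1$ and only $t=0$ otherwise, and by the standard Curie--Weiss analysis the nonzero ones are strict local maxima of the corresponding one-dimensional rate function while $0$ is a local minimum when $b>1$. Since $I(x) = \gamma \cdot (\text{CW rate in } x_1 \text{ at inverse temp. } \beta\gamma) + (1-\gamma)\cdot(\text{same in } x_2 \text{ at } \beta(1-\gamma)) + \text{const}$ splits as a sum, the global maximisers are products of one-dimensional maximisers. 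Because $\gamma < 1/2$, we have $\beta(1-\gamma) > \beta\gamma$; the low-temperature condition $\beta > 2$ forces $\beta(1-\gamma) > 1$, so $x_2$ always has two nonzero optima $\pm m^+(\beta(1-\gamma))$. Whether $\beta \gamma > 1$ or $\le 1$ then decides whether $x_1$ contributes a genuine two-fold choice or is pinned at $0$, yielding four or two global maxima respectively; one must also compare the value at $(0, \pm m^+(\beta(1-\gamma)))$ against $(\pm m^+(\beta\gamma), \pm m^+(\beta(1-\gamma)))$ when $\beta\gamma>1$, but since each coordinate is separately maximised this comparison is automatic. This disposes of the $\alpha = 0$ claim.

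For $\alpha \neq 0$ (so $\alpha > 0$ after the reduction) I would show there are exactly two solutions up to sign, hence exactly two global maxima. The key structural observation is a monotonicity/coupling argument: on the positive quadrant the map $T(x_1,x_2) = (\tanh(\beta\gamma x_1 + \alpha(1-\gamma)x_2), \tanh(\alpha\gamma x_1 + \beta(1-\gamma)x_2))$ is monotone increasing in each argument and maps $[0,1]^2$ into itself, and $x=0$ is a fixed point that is repelling in the relevant direction under the low-temperature hypothesis (the Jacobian of $T$ at $0$ is $\Gamma_\infty A \Gamma_\infty$ rescaled, with spectral radius $>1$ by assumption). A standard argument — iterating $T$ from a small vector along the Perron eigendirection, which has strictly positive entries since $\alpha>0$, and from $(1,1)$ — produces a minimal and a maximal fixed point in $(0,1]^2$, both with strictly positive coordinates. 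To get uniqueness of the positive fixed point, I would pass to the substitution $u_i = \artanh(x_i)$ and analyse the equation $u = \artanh(\text{affine in }\tanh u)$, or more cleanly use strict concavity: along the ray $x = s w$ for fixed direction $w$ in the open positive quadrant, the function $s \mapsto$ (the component of the mean-field discrepancy) is strictly concave because $\tanh$ is strictly concave on $[0,\infty)$, which forces at most one positive fixed point. Ruling out solutions with mixed signs in the $\alpha>0$ case is done by the sign-coupling: if $x$ solves the equations, then $\operatorname{sign}(x_1)=\operatorname{sign}(x_2)$ unless one of them is $0$, and $x_i=0$ for one $i$ forces $x_j=0$ for the other via $x_j = \tanh(\beta\gamma_j^2 x_j + 0)$ having only the zero solution only if $\beta\gamma_j^2 \le 1$ — here one must check that the low-temperature condition $\beta + \abs\alpha > 2$ prevents a "half-zero" solution from being a global maximum by comparing $I$-values. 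Together with $I(-x)=I(x)$ this gives exactly the two global maxima $\pm x^*$.

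The main obstacle I anticipate is the uniqueness of the positive fixed point for $\alpha \neq 0$: the clean monotonicity and concavity arguments that work for $k=2$ in the literature (e.g.\ \cite{BRS17}) used $\beta=\beta$, $\gamma=1/2$ symmetry, and here the asymmetry $\gamma\neq 1-\gamma$ breaks the reflection symmetry that made the one-variable reduction transparent. I expect the correct route is to eliminate one variable — solve the second equation for $x_1$ as an increasing function $x_1 = \phi(x_2)$ (possible because $\partial_{x_1}$ of the second mean-field relation is monotone), substitute into the first, and reduce to a single increasing-vs-identity fixed-point problem on $(0,1)$ whose right-hand side is strictly concave in $x_2$, so that it crosses the diagonal at exactly one positive point given that its slope at $0$ exceeds $1$. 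Verifying that this composed slope at $0$ exceeds $1$ is exactly the condition $\norm{\Gamma_\infty A \Gamma_\infty}_{2\to2} > 1$, closing the argument.
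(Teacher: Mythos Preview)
Your treatment of $\alpha = 0$ is correct and matches the paper's. For $\alpha \neq 0$, however, there are two genuine gaps. First, the sign-coupling claim is false: for $\alpha > 0$ the mean-field equations can have solutions with $x_1 < 0 < x_2$. Take $\alpha$ small and $\beta\gamma, \beta(1-\gamma) > 1$; the decoupled system has the solution $(-m^+(\beta\gamma), m^+(\beta(1-\gamma)))$, and by the implicit function theorem this persists for small $\alpha > 0$. So mixed-sign \emph{critical points} exist, and you cannot rule them out at the level of the fixed-point equations---you must compare $I$-values. Second, your uniqueness arguments in the positive quadrant are incomplete. Concavity along rays only excludes two fixed points on the \emph{same} ray, not globally. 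In the variable-elimination approach, solving the second equation gives $x_1 = \phi(x_2) = (\alpha\gamma)^{-1}(\artanh(x_2) - \beta(1-\gamma)x_2)$, which is \emph{not} increasing when $\beta(1-\gamma) > 1$ (its derivative at $0$ is negative), and the concavity of the composition $\tanh(\beta\gamma\,\phi(x_2) + \alpha(1-\gamma)x_2)$ is not clear since you are composing the concave $\tanh$ with a convex argument.

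The paper sidesteps both issues by exploiting Lemma~\ref{lemma:prop_H}(2): any critical point satisfies $I(x) = \frac{1}{2}(\gamma R(x_1) + (1-\gamma)R(x_2))$ with $R(t) = t\,\artanh(t) + \log(1-t^2)$. Hence every \emph{global} maximiser lies on the level set $C_\eta = \{\gamma R(x_1) + (1-\gamma)R(x_2) = 2\eta\}$, which in the first quadrant is the graph of a non-increasing function $g$ (since $R$ is increasing on $[0,1)$). The mean-field curve $x_2 = f_1(x_1) = ((1-\gamma)\alpha)^{-1}(\artanh(x_1) - \beta\gamma x_1)$ is, in the first quadrant, eventually strictly increasing, so it meets the non-increasing $g$ at most once---this gives uniqueness without any concavity. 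For the second quadrant, the paper bounds a putative solution there by $|x_1| < m_{\beta\gamma}$, $|x_2| < m_{\beta(1-\gamma)}$ (directly from the signs in the mean-field equations) and shows via $R$ that its $I$-value is strictly below that of the first-quadrant solution, so it is never a global maximum. This is the missing idea in your proposal: work with global maxima on the $R$-contour rather than with all fixed points of $T$.
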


Note that we have to restrict to $\abs{\alpha} < \beta$ and $\beta > 0$ in order for $A$ to be positive definite. Moreover, the characterization of the high temperature phase $\Gamma_\infty A \Gamma_\infty \preceq \mathrm{Id}$ (where $\preceq$ is the Loewner partial ordering) can be reduced to $\skal{(\mathrm{Id} - \Gamma_\infty A \Gamma_\infty)e_1, e_1} > 0$ and $\det(\mathrm{Id} - \Gamma_\infty A \Gamma_\infty) > 0$. Thus we are in the high temperature regime if and only if
\[
\beta \gamma < 1 \quad \text{and} \quad (\beta^2 - \alpha^2) \gamma(1-\gamma) > \beta - 1.
\]

\begin{proof}
The case $\alpha = 0$ is an easy consequence of the statements for the one-dimensional Curie--Weiss model, since $I(x_1,x_2) = I_1(x_1) + I_2(x_2)$ and $\Gamma_\infty^2 A \Gamma_\infty^2 = \mathrm{diag}(\beta \gamma^2, \beta (1-\gamma^2))$. 

We treat the case $\alpha > 0$ only -- the case $\alpha < 0$ follows immediately from the equality $I_{\alpha,\beta}(x,y) = I_{-\alpha,\beta}(x,-y)$ (with the appropriate modifications, e.g.~the maximum will be in the second quadrant instead of the first). 

Due to \eqref{eqn:DarstellungHimMaximum} the maximum of the rate function is non-negative, let us call this maximum $\eta$. Then,  $I(x,y)=\eta = 0$ implies $(x,y) = 0$, which is a contradiction to the low temperature case (recall the Hessian of $I$ in $0$ given in equation \eqref{eqn:HessianInZero}), so that $\eta > 0$. Moreover, every global maximum (and thus local maximum, as it is not attained on the boundary) satisfies the mean-field equations, and so the value of $I$ at any maximum is given by equation \eqref{eqn:DarstellungHimMaximum}. As a consequence, all global maxima lie on a contour line $C_\eta \coloneqq \{ x \in [-1,1]^2 : \gamma R(x_1) + (1- \gamma) R(x_2) = 2\eta \}$, where $R(x) = x \artanh(x) + \log(1-x^2)$ was defined in the previous lemma.

Firstly, let us show that in the first quadrant there can only be one such point. Due to symmetry, the global maximum will also be present in the third quadrant. For $x_1 > 0$ the points on the contour line $C_{\eta}$ can be described by a function $x_2 = g(x_1)$, and due to the monotonicity of $R$ the function $g$ is non-increasing. Moreover, the solutions of the mean-field equations can be described by the functions
\begin{align*}
\begin{split}
f_1(x)&:=\frac{1}{(1-\gamma)\alpha} \left(\artanh(x)- \gamma \beta x \right), \\
f_2(x)&:= \frac{1}{\gamma \alpha}\left(\artanh(x)- (1-\gamma) \beta x \right)
\end{split}
\end{align*}
via 
$$x_2=f_1(x_1) \quad \text{and} \quad x_1= f_2(x_2).$$
The function $f_1$ can behave in two ways, depending on the parameter $\gamma \beta$: For $\gamma\beta \le 1$ it increases monotonously. For $\gamma\beta > 1$  it decreases first and then increases. More precisely, in the latter case, $f_1(t)=0$ if and only if $t \in \{0, \pm m_{\gamma \beta}\}$ for some $m_{\gamma \beta} > 0$ and $f_1$ is strictly increasing for $t \ge m_{\gamma \beta}$. Moreover, the curve $(x, f_1(x))$ is only in the first quadrant if $m_{\gamma \beta} <x \leq 1$. 
In either case, there is only one intersection point of $g$ and $f_1$ in the first quadrant. \par
Secondly, the maximum cannot be in the second quadrant. Assume that there are solutions to the mean field equations both in the first and in the second quadrant. If we denote by $m_c$ the zeros of $\phi_c(t) \coloneqq \artanh(x) - cx$, for the solution in the second quadrant, we easily see  that  $-m_c<x <0$ and $0\leq y \leq m_{\beta (1- \gamma)}.$ Hence
\begin{equation*}
 I(x,y)=\frac{1}{2} \left( \gamma R(x) + (1-\gamma) R(y) \right) < \frac{1}{2} \left( \gamma R(m_{\beta \gamma}) + (1-\gamma) R(m_{(1-\gamma) \beta}) \right).
\end{equation*}
If there is also a solution in the first quadrant with coordinates $(x^*, y^*)$, we obtain analogously 
\begin{equation*}
 I(x^*,y^*)> \frac{1}{2} \left( \gamma R(m_{\beta \gamma)} + (1-\gamma) R(m_{(1-\gamma) \beta}) \right).
\end{equation*}
This yields that the maximum must lie in the first quadrant 
\end{proof}

Furthermore, we can treat the case $k > 2$ for uniform block sizes and special matrices. The proof is motivated by \cite[Proposition 4.1]{BRS17}.

\begin{lemma}\label{lemma:BlockInteractionTwomaximisers}
Let $k \geq 2$ and $A$ be a block interaction matrix with positive entries such that we have for any $i = 1,\ldots, k$ for two constants $c_1, c_2 > 0$ $A_{ii} = c_1$ and $\sum_{j \neq i} A_{ij} = c_2$.

In the uniform case, there are exactly two maximisers of the rate function $I$ and they satisfy $x = m^* (1,\ldots, 1)$ for $m^*$ solving the Curie--Weiss equation $\frac{c_1 + c_2}{k} x = \artanh(x)$.
\end{lemma}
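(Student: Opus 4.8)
The plan is to exploit the very special structure of $A$ — every row has the same diagonal entry $c_1$ and the same off-diagonal sum $c_2$ — to reduce the $k$-dimensional problem to a one-dimensional Curie--Weiss computation. First note that this structure makes the all-ones vector $\mathbf 1 = (1,\ldots,1)$ an eigenvector of $A$ with eigenvalue $c_1+c_2$; since $A$ has strictly positive entries, Perron--Frobenius forces $c_1+c_2$ to be the largest eigenvalue $\lambda_k$ with Perron eigenvector $v_k=\mathbf 1\in\{-1,0,1\}^k$. Hence, in the low-temperature regime $c_1+c_2>k$, Proposition~\ref{lemma:solutionToMFE} already shows that $\pm m^*\mathbf 1$ are critical points of $I$ with negative definite Hessian, i.e.\ strict local maxima, where $m^*>0$ solves $\tanh\bigl(\tfrac{c_1+c_2}{k}x\bigr)=x$. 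It then remains to show these are the \emph{only} global maxima and that there are no others.

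For this I would rewrite $I$. Expanding $\sum_{i\ne j}A_{ij}(x_i-x_j)^2$ and using $\sum_{j\ne i}A_{ij}=c_2$ together with the symmetry of $A$, one gets the identity
\begin{equation*}
\skal{x,Ax}=(c_1+c_2)\sum_{i=1}^k x_i^2-\tfrac12\sum_{i\ne j}A_{ij}(x_i-x_j)^2 .
\end{equation*}
In the uniform case $I(x)=\tfrac{1}{2k^2}\skal{x,Ax}-\tfrac1k\sum_{i}L^*(x_i)$, so this yields the decomposition
\begin{equation*}
I(x)=\sum_{i=1}^k\psi(x_i)-\frac{1}{4k^2}\sum_{i\ne j}A_{ij}(x_i-x_j)^2,\qquad \psi(t):=\frac{c_1+c_2}{2k^2}t^2-\frac1k L^*(t).
\end{equation*}
Since $A_{ij}>0$ for $i\ne j$ and $k\ge 2$, the correction term is nonpositive and vanishes exactly when all coordinates of $x$ coincide. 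Therefore $I(x)\le\sum_i\psi(x_i)\le k\max_{[-1,1]}\psi$, with equality throughout if and only if $x=t\mathbf 1$ for some $t$ maximizing $\psi$ on $[-1,1]$.

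It remains to analyze the scalar function $\psi$, which up to the factor $\tfrac1k$ is the Curie--Weiss free energy at inverse temperature $\beta:=\tfrac{c_1+c_2}{k}>1$. It is even, $\psi'(t)=\tfrac1k(\beta t-\artanh t)$, and since $\artanh$ is strictly convex on $(0,1)$ the function $\beta t-\artanh t$ has in $(-1,1)$ only the zeros $0$ and $\pm m^*$, with $\psi$ strictly increasing on $(0,m^*)$ and strictly decreasing on $(m^*,1)$; hence $\psi$ attains its maximum over $[-1,1]$ exactly at $\pm m^*$ (in particular not on the boundary, and not at $0$ because $\psi''(0)=\tfrac1k(\beta-1)>0$). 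Plugging this back and using that the correction term vanishes at $x=m^*\mathbf 1$, we get $I(x)\le k\psi(m^*)=I(m^*\mathbf 1)$ for every $x\in[-1,1]^k$, with equality if and only if all $x_i$ are equal to $m^*$ or all equal to $-m^*$. Thus the set of global maximizers of $I$ is exactly $\{m^*\mathbf 1,-m^*\mathbf 1\}$, which proves the lemma.

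The only genuinely new ingredient is the quadratic-form identity, which is where the hypothesis on $A$ is used; this is the step I would be most careful with, since everything afterwards is the standard one-dimensional Curie--Weiss analysis (and, if desired, can be cross-checked against Lemma~\ref{lemma:prop_H}, which independently excludes boundary maxima). I would also state explicitly the assumption $c_1+c_2>k$: otherwise $m^*=0$ is the only solution of the Curie--Weiss equation and the statement degenerates to Corollary~\ref{corollary:hightemperature}.
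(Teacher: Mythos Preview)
Your proof is correct and follows essentially the same route as the paper: both rewrite the quadratic form via $\skal{x,Ax}=(c_1+c_2)\norm{x}_2^2-\tfrac12\sum_{i\ne j}A_{ij}(x_i-x_j)^2$ to bound $I$ above by a separable sum with equality only on the diagonal $\{x_1=\cdots=x_k\}$, and then reduce to the one-dimensional Curie--Weiss analysis. Your additional remarks (the Perron--Frobenius observation, the explicit monotonicity of $\psi$, and the explicit low-temperature hypothesis $c_1+c_2>k$) are correct but not needed beyond what the core argument already gives.
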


\begin{proof}
Using the equality $xy = -\frac{1}{2} (x-y)^2 + \frac{1}{2} x^2 + \frac{1}{2} y^2$ we can rewrite the rate function as
\begin{align*}
I(x) &= \frac{1}{k} \left( \frac{1}{2k} \sum_{i \neq j} A_{ij} x_i x_j + \frac{c_1}{2k} \skal{x,x} - \sum_{i = 1}^k L^*(x_i) \right) \\
&= \frac{1}{k} \left( - \frac{1}{4k} \sum_{i,j} A_{ij} (x_i - x_j)^2 + \frac{c_1 + c_2}{2k} \skal{x,x} - \sum_{i = 1}^k L^*(x_i) \right) \\
&\le \frac{c_1 + c_2}{2k^2} \skal{x,x} - \frac{1}{k} \sum_{i = 1}^k L^*(x_i), 
\end{align*}
where equality only holds in the case $x_i = x_j$ for all $i,j$. Thus, we search for maximisers of $I$ on the generalized diagonal $\{ x \in [-1,1]^k : x_i = x_j\, \forall i,j \}$. On this set we have
\[
I((x,\ldots,x)) = \frac{c_1 + c_2}{2k} x^2 - L^*(x),
\]
i.e it reduces to the Curie--Weiss equations in one dimension. For $c_1 + c_2 > k$ it has a unique nonzero solution $m^*$, and $x = m^* (1,\ldots, 1)$ solves the $k$-dimensional maximization problem.
\end{proof}

Unfortunately, the proof cannot be modified in a straightforward way to deal with non-equal block sizes, not even in the case $k = 2$. The reason is that the inequality used in the proof does not give any information on the actual maximiser in this setting (i.\,e. $I$ is not maximized on any type of (weighted) diagonal). As such, we cannot reduce this to the one-dimensional setting.

\begin{exa*}
For example, Lemma \ref{lemma:BlockInteractionTwomaximisers} can be used to prove that given three positive parameters $\alpha,\beta,\gamma$ with $\beta > \alpha$ and $\beta + \alpha > 2 \gamma$, the rate function corresponding to
\[
A = \begin{pmatrix}
\beta & \alpha & \gamma & \gamma \\
\alpha & \beta & \gamma & \gamma \\
\gamma & \gamma & \beta & \alpha \\
\gamma & \gamma & \alpha & \beta
\end{pmatrix}
\]
only has two maximisers in the uniform case. The conditions on $\alpha, \beta, \gamma$ ensure that $A$ is positive definite, and it is clear that $c_1 = \beta$ and $c_2 = \alpha + 2 \gamma$.
\end{exa*}

As a concluding remark let us note that the previous results imply that there is indeed a phase transition in our block spin model. 
However, if $k>2$ or the block sizes are not equal, it seems hard to give a similarly explicit formula for the limit points.
Nevertheless, the above observations show that there is a phase transition in a very general class of block spin  models with an arbitrary number of blocks and general class of block sizes. In particular, they also justify the names ``high temperature regime'' and ``low temperature regime''.

\section{Proofs of the limit theorems}		\label{section:ProofsLimitTheorems}
In this section we prove (standard and non-standard) Central Limit Theorems for the vector $\widehat{m}^{(n)}$. In the first subsection we will treat the high temperature regime. Here we derive a standard CLT using the Hubbard--Stratonovich transform. This is in spirit similar to the third section in \cite{LS18} and technically related to \cite{GL99}. The result can also be derived from \cite{FC11}, where similar techniques are used. However, the subsection also prepares nicely for Subsection 3.2, where we treat the critical case and show a non standard CLT. This generalizes results from \cite{FU12} and \cite{LS18}. Finally, in Subsection 3.3 we will use Stein's method, an alternative approach to prove the CLT for $\widehat{m}^{(n)}$. This is not only interesting in its own right, but also has the advantage of providing a speed of convergence, which is missing in the case of a proof via the Hubbard--Stratonovich transform.

\subsection{Central limit theorem: Hubbard--Stratonovich approach}
For the proof we shall use the transformed block magnetization vectors
\begin{align*}
	w^{(n)} &\coloneqq V_n m^{(n)},	\\
	\widehat{w}^{(n)} &\coloneqq V_n \widehat{m}^{(n)}, \\
	\tilde{w}^{(n)} &= V_n \Gamma_n \tilde{m}^{(n)},
\end{align*}
where $\Gamma_n A \Gamma_n = V_n^T \Lambda_n V_n$ is the orthogonal decomposition. It is easy to see that
\begin{equation*}
	H_n = \frac{1}{2N} \skal{w^{(n)}, \Lambda_n w^{(n)}} = \frac{1}{2} \skal*{\widehat{w}^{(n)}, \Lambda_n \widehat{w}^{(n)}} = \frac{N}{2} \skal*{\Lambda_n \tilde{w}^{(n)}, \tilde{w}^{(n)}}.
\end{equation*}
\begin{proof}[Proof of Theorem \ref{theorem:CLTgeneralCase}]
As in \cite{LS18} or \cite{FC11} (both papers are inspired by \cite{EN78}), we use the Hubbard--Stratonovich transform (i.e.~a convolution with an independent normal distribution). For each $n \in \IN$, 
\begin{equation*}
	\mu_{J_n}(\sigma) =  Z_n^{-1} \exp \left( \frac{1}{2} \skal{\Lambda_n \widehat{w}^n, \widehat{w}^n} \right).
\end{equation*}
Our first step is to prove that $\widehat{w}^n$ converges weakly to a normal distribution. Let $Y_n \sim \mathcal{N}(0, \Lambda_n^{-1})$ be an independent sequence, which is moreover independent of $(\widehat{w}^n)_{n \in \IN}$. We have for any $B \in \mathcal{B}(\IR^k)$
\pagebreak[3]
\begin{align*}
	& \IP(\widehat{w}^n + Y_n \in B) \\
  &= Z_n^{-1} \sum_{\sigma \in \{\pm 1\}^N} \mu_{J_n}(\sigma) \int_{B} \exp \left( - \frac{1}{2} \skal{x-\widehat{w}^n, \Lambda_n(x-\widehat{w}^n)} \right) dx \\
	&= \frac{2^N}{C_n Z_n} \int_B \exp\left( - \frac{1}{2} \skal{x, \Lambda_n x} \right) \IE_{\mu_0} \exp\left( N \skal*{ \frac{1}{\sqrt{N}} \Gamma_n V^T \Lambda_n x, \frac{1}{N} \Gamma_n^{-2} m} \right) dx \\
	&= \frac{2^N}{C_nZ_n} \int_B \exp\left( - \Phi_n(x) \right) dx,
	\end{align*}
	where we have defined
	\begin{align*}
  \begin{split}
		\Phi_n(x) &\coloneqq \frac{1}{2} \skal{x, \Lambda_n x} - \sum_{i=1}^k \abs{B_i^{(n)}} \log \cosh \left( \frac{\sqrt{N}}{\abs{B_i^{(n)}}} (\Gamma_n V_n \Lambda_n x)_i \right) \\
		&= \frac{1}{2} \skal{x, \Lambda_n x} - \sum_{i=1}^k \abs{B_i^{(n)}} \log \cosh \left( \abs{B^{(n)}_i}^{-1/2} (V_n \Lambda_n x)_i \right).
    \end{split}
	\end{align*}
	Since $\log \cosh(x) = \frac{1}{2} x^2 + O(x^4)$, we obtain
	\begin{align} \label{eqn:ApproximationHamiltonian}
		\Phi_n(x) &= \frac{1}{2} \skal{x, \Lambda_n x} - \frac{1}{2} \skal{x, \Lambda_n^2 x} + \frac{1}{N} O\left(\sum_{i = 1}^k \frac{N}{\abs{B^{(n)}_i}} (V_n \Lambda_n x)_i^4 \right) \\
		&= \frac{1}{2} \skal{x, (\Lambda_n - \Lambda_n^2) x} + \frac{1}{N} O(\norm{\Gamma_n^{-1/2} V_n \Lambda_n x}_4^4). \notag
	\end{align}
	For parameters $r, R > 0$ let $B_{0,r,R} \coloneqq \{ x \in \IR^k : r \le \norm{x}_2^2 \le R \}$ and decompose
	\begin{align*}
		\IP(\widehat{w}^n + Y_n \in B) &= \frac{2^N}{C_n Z_n} \left( \int_{B \cap B_R(0)} + \int_{B \cap B_{0,R,r\sqrt{N}}} + \int_{B \cap B_{r\sqrt{N}}(0)^c} \right) \exp\left( - \Phi_n(x) \right) dx \\
		&\eqqcolon \frac{2^N}{C_n Z_n} \left( I_1 + I_2 + I_3 \right).
	\end{align*}
	Since $\Lambda_n \to \Lambda_\infty$ (which is a consequence of the continuity of the eigenvalues) we have for any $R > 0$
	\[
		\lim_{n \to \infty} I_1 = \int_{B \cap B_R(0)} \exp \left( - \frac{1}{2} \skal{x, (\Lambda_\infty - \Lambda_\infty^2) x} \right) dx.
	\]
	Next, we will estimate \eqref{eqn:ApproximationHamiltonian} from below in order to obtain an upper bound for $I_2$. If we define $C_{2,4} \coloneqq \norm{\Id}_{2 \to 4}$, it follows that
	\begin{align*}
  \begin{split}
		\Phi_n(x) &\ge \frac{1}{2} \skal{x, (\Lambda_n - \Lambda_n^2)x} - C(r) \norm{\Gamma_n^{-1/2}}_{4 \to 4} r^2 \norm{\Lambda_n}_{2 \to 2}^2 \skal{x,x}  \\
		&\ge \frac{1}{2} \skal{x, \left( \Lambda_n - \Lambda_n^2 - C(r)r^2 C \right) x} \\
		&\ge c \frac{1}{2} \skal{x, x}.
    \end{split}
	\end{align*}
	Here, we have used the convergence of $\Gamma_n$ to $\Gamma_\infty$ to bound $\norm{\Gamma_n^{-1/2}}_{4 \to 4}$ and the fact that $C(r)r^2 \to 0$ as $r \to 0$, so that the right hand side is positive definite for $r$ small enough, uniformly in $n$. Thus, after taking the limit $n \to \infty$, $I_2$ will vanish in the limit $R \to \infty$. \par
	Lastly, we need to show that $I_3$ vanishes as well. To this end, we show that we can choose $r > 0$ small enough to ensure that $\Phi_n(x) \ge \exp(-N c)$ uniformly for $x \in B_{r\sqrt{N}}(0)^c$ and for $n$ large enough. Since $\norm{\Lambda_n - \Lambda_\infty}_{2 \to 2} \to 0$ and $\norm{\Lambda_\infty}_{2 \to 2} < 1$, choose $n$ large enough so that $\norm{\Lambda_n}_{2 \to 2} < 1$ uniformly.
	Again, as before, it can be seen that $0$ is the only minimum for $n$ chosen that way. Indeed, after some manipulations any critical point satisfies $\Gamma_n A \Gamma_n \tanh(y) = y$, and since $\norm{\tanh(y)}_2 \le \norm{y}_2$ and $\norm{\Gamma_n A \Gamma_n}_{2 \to 2} < 1$, this is only possible for $y = 0$. As a consequence, for any $r > 0$ there is a constant $c$ such that uniformly $\tilde{\Phi}_n(x) \ge c$, i.e.
	\begin{equation*}	
		I_3 \le \int \eins_{\{\norm{x}_2 > r \sqrt{N}\}} \exp\left( - \Phi_n(x) \right) dx \le \int_{B_{r \sqrt{N}}(0)^c} \exp\left( -N \tilde{\Phi}_n(N^{-1/2}x) \right) dx \to 0.
	\end{equation*}
	Lastly, choose $r > 0$ so small that $\Lambda_n - \Lambda_n^2 - C(r)r^2 C$ is uniformly positive definite, and observe that we obtain
	\[
		\lim_{n \to \infty} \IP(\widehat{w}^n + Y_n \in B) = \mathcal{N}(0, (\Lambda_\infty - \Lambda_\infty^2)^{-1})(B).
	\]
	From here, it remains to undo the convolution (e.g. by using the characteristic function), giving
	\[
		\lim_{n \to \infty} \mu_{J_n}(\widehat{w}^n \in B) = \mathcal{N}(0, (\Id - \Lambda_\infty)^{-1})(B).
	\]
	With the help of Slutsky's theorem and the definition $\widehat{m}^n = V^T_n \widehat{w}^n$ this implies
	\[
		\mu_{J_n} \circ \widehat{m}^n \Rightarrow \mathcal{N}(0, V^T (\Id - \Lambda_\infty)^{-1} V) = \mathcal{N}(0, \left( \Id - \Gamma_\infty A \Gamma_\infty \right)^{-1})
	\]
	as claimed.
\end{proof}
	
\begin{exa*}
Consider the case $k = 2$ and
\[
	A_2 = \begin{pmatrix}
	\beta & \alpha \\ \alpha & \beta
	\end{pmatrix}.
\]
$A_2$ is positive definite if $\beta \ge 0$ and $(\beta - \alpha)(\beta + \alpha) \ge 0$, i.e.~if $\abs{\alpha} \le \beta$. We have the diagonalization
\[
	A_2 = \frac{1}{2} \begin{pmatrix}
	1 & 1 \\ 1 & -1
	\end{pmatrix}
	\begin{pmatrix}
	\beta+\alpha & 0 \\ 0 & \beta-\alpha
	\end{pmatrix}
	\begin{pmatrix}
	1 & 1 \\ 1 & -1
	\end{pmatrix} \eqqcolon V^T \Lambda V,
\]
and $w = V^T m = \frac{1}{\sqrt{2}} \begin{pmatrix} 1 & 1 \\ 1 & -1 \end{pmatrix}m$ corresponds to the transformation performed in \cite[Theorem 1.2]{LS18} (up to a factor of $\sqrt{2}$). In this case
	\[
		\left(\Id - \frac{1}{2}A_2\right)^{-1} = \frac{2}{(\beta-2)^2 - \alpha^2}\begin{pmatrix}
			2 - \beta & \alpha \\ \alpha & 2-\beta
		\end{pmatrix}
	\]
which is exactly the covariance matrix in \cite{LS18} (again up to a factor of $2$). Note that similar results have been derived in \cite{KT18}.
\end{exa*}

\begin{rem*}
	If $A \in M_k(\IR)$ is symmetric and positive semidefinite, then a variant of the proof shows that if we let $A = V^T \Lambda V$ with $\Lambda = \diag(\lambda_1, \ldots, \lambda_l, 0, \ldots, 0)$ for $l < k$, $((V\tilde{m})_i)_{i \le l}$ converges to an $l$-dimensional normal distribution with covariance matrix $\Sigma_l \coloneqq (\Id - \Lambda_l)^{-1}, \Lambda_l = \diag(\lambda_1, \ldots, \lambda_l)$. This can be applied to the matrix $A_2$ above with $\alpha = \beta$, resulting in a CLT for the magnetization in a Curie--Weiss model, which of course can also be obtained by choosing $k = 1$ and $0 < \beta < 1$.
\end{rem*}

\subsection{Non-central limit theorem}
Recall the situation of Theorem \ref{theorem:CLTcriticalcase}: The block interaction matrix has eigenvalues $0 < \lambda_1 \le \ldots \le \lambda_{k-1} < \lambda_k = k$ and we consider the uniform case, i.e.~$\Gamma_\infty^2 = k^{-1}$. Moreover, we use the definitions 
\begin{align*}
w' &= \diag(N^{-1/2}, \ldots, N^{-1/2}, N^{-3/4}) V m^{(n)}, \\
\hat{C}_N &= \diag(\lambda_1, \ldots, \lambda_{k-1}, kN^{1/2}),
\end{align*}
so that
\begin{equation*}
	H_n = \frac{1}{2} \skal{\hat{C}_N w', w'}.
\end{equation*}

\begin{proof}[Proof of Theorem \ref{theorem:CLTcriticalcase}]
Let $Y_n \sim \mathcal{N}(0, \hat{C}_N^{-1})$ and $X_n \sim \mu_{J_n}$ be independent random variables, defined on a common probability space. We have for any Borel set $B \in \mathcal{B}(\IR^k)$
\begin{align*}
	\IP\left( w_n'(X_n) + Y_n \in B \right) &= 2^{N} Z_n^{-1} \int_B \exp\left( - \frac{1}{2} \skal{\hat{C}_N x,x} \right) \IE_{\mu_0} \exp\left( \skal{x,\hat{C}w'} \right) dx \\
	&= \tilde{Z}_n^{-1} \int_B \exp\left( - \frac{1}{2} \skal{\hat{C}_N x,x} + \frac{N}{k} \sum_{i= 1}^k \log \cosh((V^T \Lambda \tilde{x})_i) \right) dx \\
	&= \tilde{Z}_n^{-1} \int_B \exp \left( - \Phi_N(x) \right) dx \\
	&= \tilde{Z}_n^{-1} \int_B \exp \left( - N \tilde{\Phi}_N\left( \frac{x_1}{N^{1/2}}, \ldots, \frac{x_{k-1}}{N^{1/2}}, \frac{x_k}{N^{1/4}} \right) \right) dx
\end{align*}
where we used
\begin{align*}
	\Phi_N(x) &\coloneqq \frac{1}{2} \skal{x,\hat{C}_N x} - \frac{N}{k} \sum_{i = 1}^k \log \cosh\left( \left(V^T \Lambda \left(\frac{x_1}{N^{1/2}}, \ldots, \frac{x_{k-1}}{N^{1/2}}, \frac{x_k}{N^{1/4}}\right)\right)_i \right), \\
	\tilde{\Phi}_N(x) &\coloneqq \frac{1}{2} \skal{x, \Lambda x} - \frac{1}{k} \sum_{i = 1}^k \log \cosh \left( (V^T \Lambda x)_i \right).
\end{align*}
Now the proof is along the same lines as the proof of the CLT in the high temperature phase, with the slight modification that we use expansion of $\log \cosh$ to fourth order
\[
	\log \cosh(x) = \frac{x^{2}}{2} - \frac{x^4}{12} + O(x^6).
\]
We again split $\IR^k$ into three regions, namely the inner region $I_1 = B_R(0)$ for an arbitrary $R > 0$, the intermediate region $I_2 = K_r \backslash B_R(0)$ for some arbitrary $r > 0$, where
\[
K_r \coloneqq \left\lbrace x \in \IR^k : \norm*{\left( N^{-1/2}x_1, \ldots, N^{-1/2}x_{k-1}, N^{-1/4}x_k\right)}_\infty \le r \right\rbrace,
\]
and the outer region $I_3 \coloneqq K_r^c$. Also define the rescaled vector
\begin{align*}
  \tilde{x} \coloneqq \left( \lambda_1 N^{-1/2} x_1, \ldots, \lambda_{k-1} N^{-1/2} x_{k-1}, k N^{-1/4} x_k \right).
\end{align*}
Firstly, in the inner region we rewrite
\begin{align*}
\Phi_N(x) &= \frac{1}{2} \skal{x,\hat{C}_N x} - \frac{N}{2k} \sum_{i = 1}^k (V^T \tilde{x})_i^2 + \frac{N}{12k} \sum_{i = 1}^k (V^T \tilde{x})_i^4 + \frac{N}{k} O(\norm{V^T \tilde{x}}_6^6) \\
&= \frac{1}{2} \sum_{i = 1}^{k-1} \left(\lambda_i - \frac{\lambda_i^2}{k}\right) x_i^2 +  \frac{N}{12k} \norm{V^T \tilde{x}}_4^4 + \frac{N}{k} O(\norm{V^T \tilde{x}}_6^6) \\
&= \frac{1}{2} \sum_{i = 1}^{k-1} \left(\lambda_i - \frac{\lambda_i^2}{k}\right) x_i^2 + \frac{k^3}{12} x_k^4 \sum_{i=1}^k V_{ki}^4 + O(N^{-1/4}) + \frac{N}{k} O(\norm{V^T \tilde{x}}_6^6),
\end{align*}
and since the convergence of the error terms is uniform on any compact subset of $\IR^k$, for any fixed $R > 0$ this yields
\begin{align*}
	\lim_{N \to \infty} \int_{B \cap I_1} \exp \left( - \Phi_N(x) \right) dx = \int_{B \cap I_1} \exp \left(- \frac{1}{2} \sum_{i = 1}^{k-1} \left(\lambda_i - \frac{\lambda_i^2}{k}\right) x_i^2 - \frac{k^3}{12} x_k^4 \sum_{i=1}^k V_{ki}^4 \right) dx.
\end{align*}
Secondly, we show that the outer region does not contribute to the limit $N \to \infty$. It can be seen by elementary tools that $\tilde{\Phi}_N$ has a unique minimum $0$ in $0$, and so for any $r > 0$ we have $\inf_{x \in I_3} \tilde{\Phi}(x) > 0$. Using the monotone convergence theorem, we obtain
\[
\lim_{N \to \infty} \int_{B \cap I_3} \exp \left( - N \tilde{\Phi}(x) \right) dx = 0.
\]
Lastly, we will estimate the contribution of the intermediate region from above by a quantity which vanishes as $R \to \infty$. To this end, we will bound the function $\Phi_N$ from below. Recall that
\begin{align*}
	\Phi_N(x) &= \frac{1}{2} \skal{x,\hat{C}_N x} - \frac{N}{2k} \sum_{i = 1}^k (V^T \tilde{x})_i^2 + \frac{N}{12k} \sum_{i = 1}^k (V^T \tilde{x}_i)^4 + \frac{N}{k} O(\norm{V^T \tilde{x}}_6^6) \\
	&= \frac{1}{2} \skal{x, \hat{C}_N x} - \frac{N}{2k} \skal{\tilde{x}, \tilde{x}} + \frac{N}{12k} \norm{V^T \tilde{x}}_4^4 + \frac{N}{k} O(\norm{V^T \tilde{x}}_6^6)
\end{align*}
and since $\norm{V^T \tilde{x_i}}_4^4 \ge C \norm{\tilde{x}_i}^4_4$ for $C = \norm{V}_{4 \to 4}^{-4}$ this yields
\begin{align*}
	\Phi_N(x) &\ge \frac{1}{2} \skal{x, \hat{C}_N x} - \frac{N}{2k} \skal{\tilde{x}, \tilde{x}} + \frac{N}{12k} C \norm{\tilde{x}}_4^4 + \frac{N}{k} O(\norm{V^T \tilde{x}}_6^6) \\
	&= \frac{1}{2} \skal{\left( \Lambda - k^{-1} \Lambda \right)x,x} + \frac{k^4}{12} C x_k^4 + O(\norm{V^T \tilde{x}}_6^6).
\end{align*}
Now, as in the case of the central limit theorem, we can estimate from below the error term in such a way that there is a positive constant $c$ and a positive definite matrix $C$ such that
\begin{equation*}
	\Phi_N(x) \ge \frac{1}{2} \skal{C(x_1, \ldots, x_{k-1},0),(x_1, \ldots, x_{k-1},0)} + c x_k^4,
\end{equation*}
from which we obtain an upper bound, i.e.
\begin{equation*}
	\int_{B \cap I_3} \exp \left( - \Phi_N(x) \right) dx \le \int_{B \cap I_3} \exp \left( - \frac{1}{2} \sum_{i,j=1}^{k-1} C_{ij} x_i x_j - c x_k^4 \right) dx,
\end{equation*}
and the right hand side vanishes as $R \to \infty$ by dominated convergence. As a result, the limit $n \to \infty$ exists and is equal to
\[
	\lim_{n \to \infty} \IP\left( w'_n(X_n) + Y_n \in B \right) = Z^{-1} \int_B \exp\left( - \frac{1}{2} \sum_{i = 1}^{k-1} \left( \lambda_i - \frac{\lambda_i^2}{k} \right) x_i^2 - \frac{k^3}{12} x_k^4 \sum_{i = 1}^k V_{ki}^4 \right) dx.
\]
The convergence results for the non-convoluted vector follow easily by considering the characteristic functions. We have for any $t \in \IR^k$
\begin{equation*}
	\IE \exp\left( i \skal{t,w_n'(X_n) + Y_n} \right) \to \exp\left( - \frac{1}{2} \skal{(t_1,\ldots, t_{k-1}, \tilde{\Sigma} (t_1, \ldots, t_{k-1}))}\right) \phi(t_k),
\end{equation*}
where $\tilde{\Sigma} = \diag\left( \lambda_i^{-1} + (k-\lambda_i)^{-1} \right)$ and $\phi$ is the characteristic function of a random variable with distribution $\exp\left(-x_k^4 k^3/12 \sum_{i = 1}^k V_{ki}^4 \right)$. Using the independence of $X_n$ and $Y_n$, the results follow by simple calculations.
\end{proof}

\subsection{Central limit theorem: Stein's method}
Lastly, we will prove Theorem \ref{theorem:CLTviaSteinsMethod} using Stein's method of exchangeable pairs. For brevity's sake, for the rest of this section we fix $n \in \IN$ and we will drop all sub- and superscripts (e.g.~we write $B_i$ instead of $B_i^{(n)}$, $\hat{m}$ instead of $\hat{m}^{(n)}$, $J$ instead of $J_n$ et cetera).
It is more convenient to formulate this approach in terms of random variables. Let $X$ be a random vector with distribution $\mu_J$ and $I$ be an independent random variable uniformly distributed on $\{1,\ldots, N\}$. First, denote by $(X, \tilde{X})$ the exchangeable pair which is given by taking a step in the Glauber chain for $\mu_J$, i.e.~$\tilde{X}$ is the vector after replacing $X_I$ by an independent $\tilde{X}_I$ with distribution $\tilde{X}_I \sim \mu_J( \cdot \mid \overline{X}_I)$ (the exchangeability follows from the reversibility of the Glauber dynamics). Consequently, $(\hat{m}, \hat{m}') = (\hat{m}(X), \hat{m}(\tilde{X}))$ is also exchangeable. More precisely, with the standard basis vectors $(e_i)_{i = 1,\ldots,k}$ of $\IR^k$ we have
\begin{equation}		\label{eqn:defi:mtildeminusm}
\hat{m}' \coloneqq \hat{m} - \frac{X_I - \tilde{X}_I}{\sqrt{\abs{B_I}}} \begin{pmatrix}1 \\ \vdots \\ 1 \end{pmatrix} \Rightarrow \hat{m} - \hat{m}' = \frac{X_I - \tilde{X}_I}{\sqrt{M}} e_{h(I)}.
\end{equation}
We need the following lemma to identify the conditional expectation of $\tilde{X}_i$. Here, we write $h: \{1,\ldots, N\} \to \{1,\ldots,k\}$ for the function that assigns to each position its block, i.e.~$h(j) = k \Longleftrightarrow j \in B_k$.

\begin{lemma}	\label{lemma:condProb}
Let $\mathcal{F} = \sigma(X)$ and $(X, \tilde{X})$ be defined as above. Then for each fixed $i \in \{1, \ldots, N\}$
\[
	\IE \left( \tilde{X}_i \mid \mathcal{F} \right) = \tanh\left( \frac{1}{\sqrt{N}} (A\Gamma \hat{m})_i - \frac{1}{N} A_{h(i)h(i)} X_i \right).
\]
\end{lemma}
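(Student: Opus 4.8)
The plan is to compute the conditional law of $\tilde X_i$ directly from the definition of the Glauber update and the product structure of the Ising measure under a single-site conditional. First I would note that conditioning on $\mathcal F = \sigma(X)$ already determines which site $I$ was chosen only through its distribution; but the statement is about a \emph{fixed} coordinate $i$, so the relevant event is $\{I = i\}$, on which $\tilde X_i \sim \mu_J(\cdot \mid \overline X_i)$, while on $\{I \neq i\}$ we have $\tilde X_i = X_i$. Actually the cleanest route is to observe that for the exchangeable pair at hand, $\IE(\tilde X_i \mid \mathcal F)$ as stated must be read as the conditional expectation of a $\mu_J$-distributed coordinate given all \emph{other} coordinates, i.e.\ $\IE_{\mu_J}(X_i \mid \overline X_i)$, since that is what enters the Stein identity; so the task reduces to the standard one-site conditional of the Ising model with Hamiltonian $H_n(x) = \frac12 \skal{x, J_n x}$.

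The key computation is then routine: with the dediagonalized convention irrelevant (since $x_i^2 = 1$), the conditional law of $X_i$ given $\overline X_i$ is proportional to $\exp\big( x_i \sum_{j} (J_n)_{ij} x_j \big)$ over $x_i \in \{-1,+1\}$, so that
\[
	\IE\big( X_i \mid \overline X_i \big) = \tanh\Big( \sum_{j=1}^N (J_n)_{ij} x_j \Big).
\]
Next I would unpack the argument of $\tanh$ using the block structure of $J_n$. Since $(J_n)_{ij} = \frac1N A_{h(i)h(j)}$ for all $i,j$, we get $\sum_{j} (J_n)_{ij} x_j = \frac1N \sum_{\ell=1}^k A_{h(i)\ell} \sum_{j \in B_\ell} x_j = \frac1N \sum_{\ell=1}^k A_{h(i)\ell}\, m_\ell$. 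Recalling $m_\ell = \sqrt{\abs{B_\ell}}\,\widehat m_\ell = \sqrt{N}\,\gamma_{n,\ell}\,\widehat m_\ell$ where $\Gamma_n = \diag(\gamma_{n,\ell})$, this becomes $\frac{1}{\sqrt N}(A \Gamma_n \widehat m)_{h(i)} = \frac{1}{\sqrt N}(A\Gamma\widehat m)_i$ in the notation of the lemma (with the block-indexed quantity read off at the block $h(i)$). The only subtlety is the diagonal term: the sum $\sum_j (J_n)_{ij}x_j$ includes $j = i$, contributing $(J_n)_{ii} x_i = \frac1N A_{h(i)h(i)} x_i$, and this term is \emph{not} part of the genuine conditional interaction — it must be subtracted because in the honest one-site conditional one uses $\sum_{j \neq i}(J_n)_{ij}x_j$. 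Carrying out this subtraction gives exactly $\frac{1}{\sqrt N}(A\Gamma\widehat m)_i - \frac1N A_{h(i)h(i)} X_i$ inside the $\tanh$, which is the claim.

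I expect the main obstacle to be purely bookkeeping: keeping straight the three normalizations of the magnetization vector ($m$, $\tilde m$, $\widehat m$) and the index-versus-block distinction in expressions like $(A\Gamma\widehat m)_i$, together with the correct handling of the diagonal of $J_n$ (the paper's non-standard convention that $\diag(J_n) \neq 0$ is precisely what produces the $-\frac1N A_{h(i)h(i)}X_i$ correction, and it is easy to drop or double-count it). There is no analytic difficulty — everything follows from the elementary identity $\IE(\xi) = \tanh(a)$ for a $\{\pm1\}$-valued $\xi$ with $\IP(\xi = \pm 1) \propto e^{\pm a}$ — so the proof is a short chain of algebraic rewrites once the conditional structure of the Glauber step is spelled out.
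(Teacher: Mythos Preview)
Your proposal is correct and is essentially the paper's own proof: compute the single-site conditional law $\mu_J(\cdot \mid \overline X_i)$, observe that $\IE(\tilde X_i \mid \mathcal F) = \tanh\big((J^{(d)}X)_i\big)$, and then unpack $(J^{(d)}X)_i$ via the block structure to get $N^{-1}(Am)_{h(i)} - N^{-1}A_{h(i)h(i)}X_i = N^{-1/2}(A\Gamma\hat m)_i - N^{-1}A_{h(i)h(i)}X_i$. Your remark that the statement should be read as the resampled conditional (equivalently, conditioning additionally on $I=i$) is exactly the intended interpretation, consistent with how the lemma is applied in the subsequent computation.
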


\begin{proof}
	For any Ising model $\mu = \mu_J$ the conditional distribution of $\tilde{X}_i$ is given by $\mu(\cdot \mid \overline{X}_i)$ and so
	\[
		\IE \left( \tilde{X}_i \mid \mathcal{F} \right) = 2\mu(1 \mid \overline{X}_i) - 1 = \tanh\left( (J^{(d)}X)_i \right),
	\]
	where we recall the notation $J^{(d)}$ for the matrix without its diagonal, i.e.~$J^{(d)} = J - \diag(J_{ii})$. In the case that $J = J_n$ is the block model matrix, this yields
	\begin{align*}
	\IE\left( \tilde{X}_i \mid \mathcal{F} \right)
	&= \tanh\left( N^{-1} \sum_{j = 1}^k A_{h(i)j} \sum_{l \in B_j} X_l - N^{-1} A_{h(i) h(i)} X_i \right) \\
  &= \tanh\left( N^{-1} (A m)_{h(i)} - N^{-1} A_{h(i) h(i)} X_i \right) \\
	&= \tanh\left( N^{-1/2} (A\Gamma \hat{m})_i - N^{-1} A_{h(i)h(i)} X_i \right).
	\end{align*}
\end{proof}

Since the conditional expectation will be of importance, we define
\begin{equation*}
	g_i(X) \coloneqq N^{-1} (A m)_{h(i)} - N^{-1} A_{h(i) h(i)} X_i = N^{-1/2} (A\Gamma \hat{m})_i - N^{-1} A_{h(i)h(i)} X_i,
\end{equation*}
so that $\IE(\tilde{X}_i \mid \mathcal{F}) = \tanh(g_i(X))$. Note that $g_i$ actually does not depend on $X_i$, the latter term is added for convenience to rewrite the first term. Thus we have $g_i(X) = \IE(\tilde{X}_i \mid \overline{X}_i)$.

\begin{lemma}\label{lemma:conditionalExpectation}
We have
\begin{equation*}
	\IE\left( \hat{m} - \hat{m}' \mid \mathcal{F}\right) = N^{-1} \left( \Id - \Gamma A \Gamma \right) \hat{m} + R(X),
\end{equation*}
with
\begin{equation*}
R(X) \coloneqq N^{-1} \sum_{i = 1}^k e_i \left( (\Gamma A \Gamma \hat{m})_i - \abs{B_i}^{-1/2} \sum_{j \in B_i} \tanh\left( g_j(X) \right) \right).
\end{equation*}
\end{lemma}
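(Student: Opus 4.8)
The plan is to compute $\IE(\hat m - \hat m' \mid \mathcal F)$ directly from the construction of the Glauber exchangeable pair together with Lemma~\ref{lemma:condProb}, and then to rearrange the resulting expression into the claimed form by splitting off the linear term $N^{-1}(\Id - \Gamma A \Gamma)\hat m$.

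First I would start from the representation in \eqref{eqn:defi:mtildeminusm}: the increment $\hat m - \hat m'$ is supported on the single coordinate $h(I)$ and equals $\abs{B_{h(I)}}^{-1/2}(X_I - \tilde X_I)\, e_{h(I)}$. Since $I$ is uniform on $\{1,\ldots,N\}$ and independent of $X$, and since on $\{I = i\}$ the resampled coordinate $\tilde X_i$ has $\mathcal F$-conditional law $\mu_J(\cdot \mid \overline X_i)$ — in particular, conditional mean $\tanh(g_i(X))$ by Lemma~\ref{lemma:condProb} — regardless of the value of $i$, the tower property yields
\[
\IE(\hat m - \hat m' \mid \mathcal F) = \frac1N \sum_{i=1}^N \frac{X_i - \tanh(g_i(X))}{\sqrt{\abs{B_{h(i)}}}}\, e_{h(i)}.
\]
Next I would regroup this sum over positions $i$ into a sum over blocks $l = 1,\ldots,k$: the positions with $h(i) = l$ are exactly $i \in B_l$, so the coefficient of $e_l$ is $N^{-1}\abs{B_l}^{-1/2}\sum_{i\in B_l}(X_i - \tanh(g_i(X)))$. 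Using $\sum_{i\in B_l} X_i = m_l$ and $\abs{B_l}^{-1/2} m_l = \hat m_l$, this rewrites as $\IE(\hat m - \hat m' \mid \mathcal F) = N^{-1}\hat m - N^{-1}\sum_{l=1}^k e_l\, \abs{B_l}^{-1/2}\sum_{i\in B_l}\tanh(g_i(X))$. Finally I would split $N^{-1}\hat m = N^{-1}(\Id - \Gamma A \Gamma)\hat m + N^{-1}\Gamma A \Gamma \hat m$; the remaining terms $N^{-1}\Gamma A \Gamma \hat m - N^{-1}\sum_{l} e_l \abs{B_l}^{-1/2}\sum_{i\in B_l}\tanh(g_i(X))$ are precisely $R(X)$ as defined in the statement, because $(\Gamma A \Gamma \hat m)_l$ is the $l$-th coordinate of $\Gamma A \Gamma \hat m$. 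This gives the asserted identity.

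I do not expect a genuine obstacle here: the argument is essentially bookkeeping. The only point worth stating carefully is the conditional-expectation step — one must observe that $\tilde X_I$ depends on $I$ only through the choice of which coordinate gets resampled, so that on $\{I = i\}$ the $\mathcal F$-conditional distribution of $\tilde X_I$ is $\mu_J(\cdot \mid \overline X_i)$ independently of the value of $i$, which is exactly what legitimizes interchanging the average over $I$ with the conditional expectation and invoking Lemma~\ref{lemma:condProb}. Everything after that is the algebraic rearrangement described above.
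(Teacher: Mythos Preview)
Your proposal is correct and follows essentially the same route as the paper: both start from \eqref{eqn:defi:mtildeminusm}, average over the uniform index $I$, apply Lemma~\ref{lemma:condProb} to replace $\IE(\tilde X_j \mid \mathcal F)$ by $\tanh(g_j(X))$, regroup by blocks to extract $N^{-1}\hat m$, and then add and subtract $N^{-1}\Gamma A \Gamma \hat m$ to isolate the linear part and recognize the remainder as $R(X)$. The only cosmetic difference is that the paper groups by blocks from the outset rather than first summing over positions $i=1,\ldots,N$.
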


\begin{proof}
From equation \eqref{eqn:defi:mtildeminusm} and Lemma \ref{lemma:condProb} we obtain
\begin{align*}
	\IE\left( \hat{m} - \hat{m}' \mid \mathcal{F} \right) &= N^{-1} \sum_{i = 1}^k e_i \abs{B_i}^{-1/2}\sum_{j \in B_i} \IE( X_j - \tilde{X_j} \mid \mathcal{F}) \\
	&= N^{-1} \sum_{i = 1}^k e_i \hat{m}_i - N^{-1} \sum_{i = 1}^k e_i \abs{B_i}^{-1/2} \sum_{j \in B_i} \tanh(g_j(X)) \\
	&= N^{-1} \hat{m} - N^{-1} \sum_{i = 1}^k e_i \abs{B_i}^{-1/2} \Big( \sum_{j \in B_i} N^{-1/2} (A\Gamma \hat{m})_i \Big) + R(X) \\
	&= N^{-1} \left( \Id - \Gamma A \Gamma \right) \hat{m} + R(X).
\end{align*}
\end{proof}

For $n$ large enough, the matrix $\Lambda \coloneqq N^{-1}(\Id - \Gamma A \Gamma)$ satisfies $\norm{\Lambda}_{2 \to 2} < \frac{1}{N}$ and is thus invertible, with inverse $\Lambda^{-1} = N \sum_{l = 0}^\infty (\Gamma A \Gamma)^l$. Moreover, we also have $\norm{\Lambda^{-1}}_{2 \to 2} \le N (1-\norm{\Gamma A \Gamma}_{2 \to 2})^{-1}$.

We will need the following approximation theorem for random vectors.

\begin{thm}[\cite{RR09}, Theorem 2.1]
Assume that $(W,W')$ is an exchangeable pair of $\IR^d$-valued random vectors such that
\begin{equation*}
	\IE W= 0, \quad \quad \IE W W^t = \Sigma,
\end{equation*}
with $\Sigma \in \IR^{d\times d}$ symmetric and positive definite. Suppose further that 
\begin{equation*}
\IE[W' - W \mid W] = -\Lambda W + R
\end{equation*}
is satisfied for an invertible matrix $\Lambda$ and a $\sigma(W)$-measurable random vector $R$. Then, if $Z$ has $d$-dimensional standard normal distribution, we have for every three times differentiable function
\begin{equation*}
	\abs{\IE h(W) - \IE h(\Sigma^{1/2} Z)} \le \frac{\abs{h}_2}{4} E_1 + \frac{\abs{h}_3}{12} E_2 + \left( \abs{h}_1 + \frac{1}{2} d \norm{\Sigma}^{1/2} \abs{h}_2 \right) E_3,
\end{equation*}
where, with $\lambda{(i)} \coloneqq \sum_{m = 1}^d \abs{\left( \Lambda^{-1} \right)_{m,i}}$, we define the three error terms
\begin{align*}
	E_1 &= \sum_{i,j = 1}^d \lambda{(i)} \sqrt{\Var \IE\left[(W_i'-W_i)(W_j'-W_j) \mid W \right]}, \\
	E_2 &= \sum_{i,j,k = 1}^d \lambda{(i)} \IE \abs{(W_i'-W_i)(W_j'-W_j)(W_k'-W_k)}, \\
	E_3 &= \sum_{i = 1}^d \lambda{(i)} \sqrt{\Var R_i}.
\end{align*}
\end{thm}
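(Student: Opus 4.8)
The plan is to run the multivariate exchangeable-pair version of Stein's method in its Ornstein--Uhlenbeck (semigroup) form. Write $\Phi_\Sigma(h)\coloneqq\IE h(\Sigma^{1/2}Z)$. The first step is the Stein equation for $\mathcal{N}(0,\Sigma)$,
\[
	\skal{\Sigma,\Hess f(w)} - \skal{w,\nabla f(w)} = h(w) - \Phi_\Sigma(h),
\]
with $\skal{\Sigma,\Hess f(w)}=\sum_{i,j}\Sigma_{ij}\partial_i\partial_j f(w)$, which I would solve explicitly through the OU semigroup with stationary law $\mathcal{N}(0,\Sigma)$,
\[
	f_h(w) = -\int_0^\infty\Big(\IE h\big(e^{-t}w+\sqrt{1-e^{-2t}}\,\Sigma^{1/2}Z\big)-\Phi_\Sigma(h)\Big)\,dt .
\]
Differentiating under the integral sign, each derivative pulls out a factor $e^{-t}$ by the chain rule, so after integrating in $t$ one obtains the bounds $\abs{f_h}_k\le\tfrac1k\abs{h}_k$ for $k=1,2,3$; these are the only properties of the Stein solution that enter. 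It then suffices to estimate $\IE h(W)-\Phi_\Sigma(h)=\IE\big[\skal{\Sigma,\Hess f_h(W)}-\skal{W,\nabla f_h(W)}\big]$.

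Next I would bring in the exchangeable pair. Since $\Lambda$ is invertible, the regression hypothesis gives $W=\Lambda^{-1}\big(\IE[W-W'\mid W]+R\big)$, hence
\[
	\IE\skal{W,\nabla f_h(W)} = \IE\skal{\Lambda^{-1}(W-W'),\nabla f_h(W)} + \IE\skal{\Lambda^{-1}R,\nabla f_h(W)},
\]
where $W-W'$ may be de-conditioned because $\nabla f_h(W)$ is $\sigma(W)$-measurable. Using that $\IE G(W,W')=0$ for every antisymmetric $G$, applied to $G(w,w')=\skal{\Lambda^{-1}(w-w'),\nabla f_h(w)+\nabla f_h(w')}$, the first term equals $\tfrac12\IE\skal{\Lambda^{-1}(W-W'),\nabla f_h(W)-\nabla f_h(W')}$. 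A first-order Taylor expansion of $\nabla f_h(W')-\nabla f_h(W)$ about $W$, with remainder carrying $\partial^3 f_h$ at an intermediate point, turns this into the quadratic form $\tfrac12\IE\big[\sum_{i,m,j}(\Lambda^{-1})_{mi}(W'_i-W_i)(W'_j-W_j)\,\partial_m\partial_j f_h(W)\big]$ plus a third-order remainder; the latter is bounded termwise using $\abs{f_h}_3\le\tfrac13\abs{h}_3$ and the weights $\lambda(i)=\sum_m\abs{(\Lambda^{-1})_{mi}}$, giving exactly $\tfrac{\abs{h}_3}{12}E_2$.

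The heart of the argument, and the step I expect to be the real obstacle, is replacing the random weight $\tfrac12\Lambda^{-1}(W'-W)(W'-W)^T$ by the deterministic $\Sigma$. I would split $(W'_i-W_i)(W'_j-W_j)$ into its conditional expectation given $W$ and the centred remainder. The centred remainder, paired against $\abs{\partial_m\partial_j f_h}_\infty\le\tfrac12\abs{h}_2$ by Cauchy--Schwarz and summed with the weights $\lambda(i)$, contributes exactly $\tfrac{\abs{h}_2}{4}E_1$. For the conditional-mean part I use the identity that follows from exchangeability and the regression hypothesis alone: from $\IE[W'_iW'_j]=\IE[W_iW_j]=\Sigma_{ij}$ and $\IE[W_iW'_j]=\IE\big[W_i\IE[W'_j\mid W]\big]=\Sigma_{ij}-(\Sigma\Lambda^T)_{ij}+\IE[W_iR_j]$ one gets $\IE[(W'-W)(W'-W)^T]=2\Sigma\Lambda^T-2\,\IE[WR^T]$, hence $\tfrac12\,\Lambda^{-1}\IE[(W'-W)(W'-W)^T]=\Lambda^{-1}\Sigma\Lambda^T-\Lambda^{-1}\IE[WR^T]$. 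Contracting against the symmetric matrix $\Hess f_h(W)$ and using the compatibility between $\Lambda$ and $\Sigma$ underlying the theorem (so that $\Lambda^{-1}\Sigma\Lambda^T$ acts as $\Sigma$; equivalently $\Lambda\Sigma$ is symmetric, as in the intended applications), the leading second-order term cancels $\IE\skal{\Sigma,\Hess f_h(W)}$, leaving only an $R$-dependent defect.

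Finally, this defect together with $\IE\skal{\Lambda^{-1}R,\nabla f_h(W)}$ is controlled by Cauchy--Schwarz: taking expectations in the regression identity together with $\IE W=0$ and exchangeability gives $\IE R=0$, so $\IE\abs{R_i}\le\sqrt{\Var R_i}$, and $\Sigma_{ii}\le\norm{\Sigma}_{2\to2}$; collecting the weights $\lambda(i)$ these produce exactly $\big(\abs{h}_1+\tfrac12 d\,\norm{\Sigma}^{1/2}\abs{h}_2\big)E_3$. Adding $\tfrac{\abs{h}_2}{4}E_1+\tfrac{\abs{h}_3}{12}E_2+\big(\abs{h}_1+\tfrac12 d\,\norm{\Sigma}^{1/2}\abs{h}_2\big)E_3$ yields the stated bound. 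The genuinely delicate points are keeping the factor $\tfrac12$ coming from the antisymmetrisation separate from the $\tfrac12$ in Taylor's formula, ensuring that in each error term the weight $\lambda(i)=\sum_m\abs{(\Lambda^{-1})_{mi}}$ attaches to the correct (first) index of $\Lambda^{-1}$, and carrying out the cancellation of the $\Sigma$-Hessian term cleanly; the remaining estimates are routine applications of Taylor's theorem and Cauchy--Schwarz.
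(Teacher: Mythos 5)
This theorem is imported from \cite{RR09} and not proved in the present paper, so the comparison is with the source's argument, which your reconstruction follows faithfully in structure: the Ornstein--Uhlenbeck solution of the Stein equation with $\abs{f_h}_k\le\abs{h}_k/k$, the antisymmetrisation identity, the third-order Taylor remainder giving $\tfrac{\abs{h}_3}{12}E_2$ (with the two factors of $\tfrac12$ correctly kept apart), the splitting of the quadratic form into conditional mean plus centred fluctuation giving $\tfrac{\abs{h}_2}{4}E_1$, and the attachment of $\lambda(i)=\sum_m\abs{(\Lambda^{-1})_{mi}}$ to the correct index. The one genuine gap is the cancellation of $\IE\skal{\Sigma,\Hess f_h(W)}$: you invoke ``$\Lambda\Sigma$ is symmetric, as in the intended applications''. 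That is an additional hypothesis which the theorem does not make, so as written your argument proves only a restricted version of the statement.

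The missing observation is that you used only one of the two cross-moment identities. Exchangeability gives $\IE[W(W')^T]=\IE[W'W^T]$, while the regression condition evaluates these two matrices as $\Sigma-\Sigma\Lambda^T+\IE[WR^T]$ and $\Sigma-\Lambda\Sigma+\IE[RW^T]$ respectively; equating them yields $\Lambda\Sigma-\Sigma\Lambda^T=\IE[RW^T-WR^T]$, i.e.\ the failure of $\Lambda\Sigma$ to be symmetric is itself an $R$-term forced to be small by the hypotheses, not an assumption. Feeding this back into your computation gives, with no symmetry assumption,
\begin{equation*}
\tfrac12\,\Lambda^{-1}\IE\big[(W'-W)(W'-W)^T\big]=\tfrac12\Lambda^{-1}\big(\Lambda\Sigma+\Sigma\Lambda^T\big)-\tfrac12\Lambda^{-1}\IE\big[WR^T+RW^T\big]=\Sigma-\Lambda^{-1}\IE[RW^T],
\end{equation*}
and the defect $\Lambda^{-1}\IE[RW^T]$ is bounded entrywise by $\sum_i\abs{(\Lambda^{-1})_{mi}}\sqrt{\Var R_i}\,\Sigma_{jj}^{1/2}$ (using $\IE R=0$, which you already derived), so that after contraction with $\Hess f_h$ it contributes precisely the $\tfrac12 d\norm{\Sigma}^{1/2}\abs{h}_2E_3$ portion of the stated bound. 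With that replacement your argument proves the theorem in full generality and coincides with the proof in \cite{RR09}.
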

Here, $\abs{h}_j$ denotes the supremum of the partial derivatives of up to order $j$.

Note that in the proof the choice of $\sigma(W)$ for the conditional expectation is arbitrary; it suffices to take any $\sigma$-algebra $\mathcal{F}$ with respect to which $W$ is measurable. Clearly, the value $E_1$ has to be adjusted accordingly. 

\begin{cor}			\label{corollary:differencemXandZFormulation}
Let $\hat{m}$ be the block magnetization vector and $\hat{m}'$ as above, define $\Sigma \coloneqq \IE \hat{m}\hat{m}^T$ and let $Z \sim \mathcal{N}(0, \Sigma)$. For any function $h \in \mathcal{F}_3$
\begin{equation*}		
	\abs{\IE h(\hat{m}(X)) - \IE h(Z)} \le CN \left( \frac{\abs{h}_2}{4} E_1 + \frac{\abs{h}_3}{12} E_2 + \left(\abs{h}_1 + \frac{1}{2} k \norm{\Sigma}^{1/2} \abs{h}_2 \right) E_3 \right)
\end{equation*}
with the three error terms
\begin{align*}
	E_1 &= \sum_{i = 1}^k \sqrt{\Var\left( \IE((\hat{m}_i(X) - \hat{m}_i(\tilde{X}))^2 \mid \mathcal{F}) \right)} \\
	E_2 &= \sum_{i = 1}^k \IE \abs{\hat{m}_i(X) - \hat{m}_i(\tilde{X})}^3 \\
	E_3 &= \sum_{i = 1}^k \sqrt{\Var(R_i)}.
\end{align*}
\end{cor}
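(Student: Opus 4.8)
The plan is to deduce the bound directly from the multivariate exchangeable-pair theorem of \cite{RR09} quoted above, applied to the pair $(W,W')\coloneqq(\hat m(X),\hat m(\tilde X))$, and then to rewrite its three abstract error terms in the form stated. First I would verify the hypotheses of that theorem. Exchangeability of $(W,W')$ has already been established (it is inherited from the reversibility of the Glauber dynamics), $\IE W=0$ follows from the spin-flip symmetry $\mu_J(x)=\mu_J(-x)$, and $\IE WW^T=\Sigma$ holds by the very definition of $\Sigma$. The matrix $\Sigma$ is symmetric and positive semidefinite, and in fact positive definite: if $\skal{t,\Sigma t}=0$ then $\skal{t,\hat m(x)}=0$ for every $x\in\{\pm1\}^N$ (since $\mu_J$ charges every configuration), and comparing $x=\mathbf 1$ with the configuration obtained by flipping a single site in block $i$ forces $t_i=0$ for every $i$. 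Finally, $\IE[W'-W\mid\CF]=-\Lambda W+R$ with $\Lambda=N^{-1}(\Id-\Gamma A\Gamma)$ and $R=-R(X)$ is exactly Lemma \ref{lemma:conditionalExpectation} (the sign of $R$ is irrelevant, as only $\Var(R_i)$ enters), the invertibility of $\Lambda$ together with the Neumann series $\Lambda^{-1}=N\sum_{l\ge0}(\Gamma A\Gamma)^l$ was recorded right after that lemma, and—by the remark following the statement of the theorem—it is legitimate to condition everywhere on $\CF=\sigma(X)$, which contains $\sigma(\hat m(X))$.

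The point that turns the abstract error terms into the ones claimed is the structural observation, already visible in \eqref{eqn:defi:mtildeminusm}, that a single Glauber step changes only one block coordinate: $\hat m-\hat m'$ is a scalar multiple of a single basis vector $e_{h(I)}$. Hence $(\hat m_i-\hat m_i')(\hat m_j-\hat m_j')\equiv0$ for $i\neq j$, and likewise every triple product with a non-repeated index vanishes. Writing $\lambda(i)\coloneqq\sum_{m=1}^k\abs{(\Lambda^{-1})_{m,i}}=\norm{\Lambda^{-1}e_i}_1$, only the diagonal indices survive in the theorem's error terms, so that
\[
E_1^{\mathrm{RR}}=\sum_{i=1}^k\lambda(i)\sqrt{\Var\IE\big[(\hat m_i-\hat m_i')^2\mid\CF\big]},\qquad E_2^{\mathrm{RR}}=\sum_{i=1}^k\lambda(i)\,\IE\abs{\hat m_i-\hat m_i'}^3,\qquad E_3^{\mathrm{RR}}=\sum_{i=1}^k\lambda(i)\sqrt{\Var R_i}.
\]

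It then remains to pull the weights $\lambda(i)$ out. Using $\norm{\cdot}_1\le\sqrt k\,\norm{\cdot}_2$ and the estimate $\norm{\Lambda^{-1}}_{2\to2}\le N(1-\norm{\Gamma A\Gamma}_{2\to2})^{-1}$ noted above, together with $\norm{\Gamma_n A\Gamma_n}_{2\to2}\to\norm{\Gamma_\infty A\Gamma_\infty}_{2\to2}<1$ (so that this upper bound is $\le CN$ with $C$ not depending on $n$ for $n$ large, the finitely many small $n$ being absorbed into $C$), one obtains $\max_i\lambda(i)\le CN$. Factoring this out of $E_1^{\mathrm{RR}},E_2^{\mathrm{RR}},E_3^{\mathrm{RR}}$ gives $E_j^{\mathrm{RR}}\le CN\,E_j$ with the $E_j$ from the statement. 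Substituting into the inequality of \cite{RR09}, using $d=k$ and $\IE h(\Sigma^{1/2}Z_0)=\IE h(Z)$ for a standard normal $Z_0$ and $Z\sim\mathcal N(0,\Sigma)$, yields precisely the asserted bound.

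I do not expect a genuinely hard step: the corollary is essentially a transcription of the theorem of \cite{RR09}. The two points that need some care are (i) verifying that $\Sigma$ is positive definite rather than merely semidefinite, so that $\Sigma^{1/2}$ and the hypotheses are meaningful, and (ii) the structural remark that one Glauber step alters only a single block coordinate of $\hat m$, which is exactly what collapses the double and triple sums and is responsible for the particular shape of $E_1$ and $E_2$. The prefactor $N$ reflects $\norm{\Lambda^{-1}}_{2\to2}=O(N)$ and is expected; it is harmless for the application in Theorem \ref{theorem:CLTviaSteinsMethod}, where it will be outweighed by the decay of $E_1$, $E_2$ and $E_3$.
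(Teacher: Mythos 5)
Your proposal is correct and follows exactly the route the paper intends: the corollary is an immediate application of the quoted theorem of \cite{RR09} to the pair $(\hat m(X),\hat m(\tilde X))$, with the single-site Glauber update forcing $\hat m-\hat m'$ to be supported on one coordinate (collapsing the double and triple sums to diagonal terms) and with $\max_i\lambda(i)\le\sqrt k\,\norm{\Lambda^{-1}}_{2\to2}\le CN$ supplying the prefactor. Your verification of the positive definiteness of $\Sigma$ and of the regression condition via Lemma \ref{lemma:conditionalExpectation} fills in details the paper leaves implicit, and nothing in the argument is missing or incorrect.
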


Finally, the following lemma shows that all error terms $E_i$ can be bounded by a term of order $N^{-3/2}$.

\begin{lemma}			\label{lemma:BoundOnEi}
	In the situation of Corollary \ref{corollary:differencemXandZFormulation} we have
	\begin{equation*}
		\max(E_1,E_2,E_3) = O(N^{-3/2}).
	\end{equation*}
\end{lemma}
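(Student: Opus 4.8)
The plan is to estimate $E_1$, $E_2$ and $E_3$ separately. Two facts drive everything: the structural identity \eqref{eqn:defi:mtildeminusm}, according to which a single Glauber step changes only the $h(I)$-th coordinate of $\hat m$, and by at most $2\abs{B_{h(I)}}^{-1/2} = O(N^{-1/2})$; and the explicit conditional expectations from Lemmas \ref{lemma:condProb} and \ref{lemma:conditionalExpectation}. I will also use throughout that $k$ is fixed, that $\abs{B_i} \asymp N$, and -- crucially -- that the block magnetizations have uniformly bounded fourth moments, $\IE\abs{\hat m_i}^4 = O(1)$ uniformly in $n$ (equivalently $\IE[m_i^4] = O(N^2)$); I return to this point at the end.

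\textbf{The term $E_2$.} Conditioning on the uniformly chosen coordinate $I$, the increment $\hat m_i(X) - \hat m_i(\tilde X)$ vanishes unless $h(I) = i$, in which case it has modulus $\le 2\abs{B_i}^{-1/2}$. Hence $\IE\abs{\hat m_i(X) - \hat m_i(\tilde X)}^3 \le \IP(h(I) = i)\,(2\abs{B_i}^{-1/2})^3 = (\abs{B_i}/N)\cdot O(\abs{B_i}^{-3/2}) = O(N^{-3/2})$, and summing over the $k$ blocks gives $E_2 = O(N^{-3/2})$. Since one checks that $\IP(X_j \neq \tilde X_j) = \tfrac12 + O(N^{-1})$, this bound is of the exact order $N^{-3/2}$, so $E_2$ is what fixes the rate in the statement.

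\textbf{The term $E_1$.} From $(X_j - \tilde X_j)^2 = 2 - 2X_j\tilde X_j$ and Lemma \ref{lemma:condProb} one obtains the closed form $\IE\big( (\hat m_i(X) - \hat m_i(\tilde X))^2 \mid \mathcal F \big) = \tfrac2N - \tfrac{2}{N\abs{B_i}}\sum_{j\in B_i}X_j\tanh(g_j(X))$. For $j \in B_i$ the argument $g_j(X)$ differs from $u_i \coloneqq N^{-1}(Am)_i$ by the \emph{deterministically} bounded quantity $N^{-1}A_{ii}X_j$, so replacing $\tanh(g_j(X))$ by the $j$-independent $\tanh(u_i)$ (using that $\tanh$ is $1$-Lipschitz) perturbs the expression by a random term of modulus $O(N^{-2})$. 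As the constant $2/N$ does not affect the variance and the $O(N^{-2})$ term contributes $O(N^{-4})$, one is reduced to bounding $N^{-2}\abs{B_i}^{-2}\Var(\tanh(u_i)m_i) \le CN^{-4}\IE[u_i^2 m_i^2]$. By Cauchy--Schwarz, $\IE[u_i^2 m_i^2] \le (\IE[u_i^4])^{1/2}(\IE[m_i^4])^{1/2}$, and $\IE[u_i^4] = N^{-4}\IE[(Am)_i^4] = O(N^{-2})$ (by Jensen's inequality applied to the $i$-th row of $A$, using $\IE[m_l^4] = O(N^2)$), so $\IE[u_i^2 m_i^2] = O(1)$ and $E_1 = O(N^{-2}) = O(N^{-3/2})$.

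\textbf{The term $E_3$.} By Lemma \ref{lemma:conditionalExpectation}, $R_i = N^{-1}\big( (\Gamma A\Gamma\hat m)_i - \abs{B_i}^{-1/2}\sum_{j\in B_i}\tanh(g_j(X)) \big)$, and a direct computation gives $(\Gamma A\Gamma\hat m)_i = \abs{B_i}^{1/2}u_i$. Writing $\tanh(g_j(X)) = g_j(X) - \big(g_j(X) - \tanh(g_j(X))\big)$ with $g_j(X) = u_i - N^{-1}A_{ii}X_j$, the linear parts cancel against $(\Gamma A\Gamma\hat m)_i$ and one is left with
\[
R_i = N^{-1}\Big( \abs{B_i}^{-1/2}N^{-1}A_{ii}m_i + \abs{B_i}^{-1/2}\sum_{j\in B_i}\big(g_j(X) - \tanh(g_j(X))\big) \Big).
\]
Using $\abs{x - \tanh x} \le \abs{x}^3/3$ together with $g_j(X) = u_i + O(N^{-1})$ (deterministically), the second sum is $\abs{B_i}^{1/2}\cdot O(u_i^3 + N^{-1}u_i^2 + N^{-3})$; combining the prefactor with $\norm{m_i}_{L^2} = O(N^{1/2})$ and $\norm{u_i^3}_{L^2} \le C\norm{u_i^2}_{L^2} = C\norm{u_i}_{L^4}^2 = O(N^{-1})$ (again from the fourth-moment bound), every contribution to $\norm{R_i}_{L^2}$ is $O(N^{-3/2})$, the largest being $N^{-1}\abs{B_i}^{1/2}\norm{u_i^3}_{L^2}$. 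Thus $\sqrt{\Var R_i} \le \norm{R_i}_{L^2} = O(N^{-3/2})$ and $E_3 = O(N^{-3/2})$.

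\textbf{Main obstacle.} Assembling the three estimates gives $\max(E_1,E_2,E_3) = O(N^{-3/2})$. The one genuinely non-routine input is the uniform fourth-moment bound $\IE\abs{\hat m_i}^4 = O(1)$: the crude deterministic bounds $\abs{m_i} \le \abs{B_i}$ and $\abs{u_i} = O(1)$ are far too weak here, yielding only $E_1 = O(N^{-1})$. In the high temperature regime $\norm{\Gamma_\infty A\Gamma_\infty}_{2\to 2} < 1$ such moment bounds follow from a uniform-in-$n$ sub-Gaussian tail estimate for $\hat m^{(n)}$, obtainable for instance from the Hubbard--Stratonovich representation used in the proof of Theorem \ref{theorem:CLTgeneralCase} or from a logarithmic Sobolev inequality for $\mu_{J_n}$; alternatively they can be bootstrapped from the exchangeable-pair relation $\IE[\hat m - \hat m' \mid \mathcal F] = N^{-1}(\Id - \Gamma A\Gamma)\hat m + R$ by pairing it with $\hat m$ and exploiting the positive definiteness of $\Id - \Gamma A\Gamma$. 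With these moments in hand, what remains is the routine bookkeeping of the Taylor remainders sketched above.
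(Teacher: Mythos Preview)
Your argument is correct, and for $E_2$ and $E_3$ it is essentially the paper's proof. For $E_1$, however, you take a genuinely different route. The paper bounds $\Var(f_i)$ for $f_i(X) = \abs{B_i}^{-1/2}\sum_{j\in B_i}X_j\tanh(g_j(X))$ by applying the Poincar\'e inequality (a consequence of the logarithmic Sobolev inequality in Proposition~\ref{proposition:LSIforIsingModels}) directly, estimating the discrete gradient $\mathfrak{h}_r f_i$ pointwise. You instead replace $\tanh(g_j)$ by the $j$-independent $\tanh(u_i)$ via the Lipschitz property, and then reduce everything to the moment bound $\IE[u_i^2 m_i^2] = O(1)$, which in turn rests on $\IE\hat m_i^4 = O(1)$. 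For $E_3$ the two arguments are closer, but the paper invokes the sixth-moment bound $\norm{(A\Gamma\hat m)_i}_6 = O(1)$ coming from the LSI, whereas you combine the fourth-moment bound with the deterministic estimate $\abs{u_i} = O(1)$ to avoid needing a sixth moment.

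Both arguments ultimately rest on the same LSI; the paper uses it twice (via Poincar\'e for $E_1$, via sub-Gaussian moment growth for $E_3$), while your route uses only the moment-growth consequence throughout. Your bookkeeping is arguably more elementary, and you even obtain the sharper order $E_1 = O(N^{-2})$ rather than $O(N^{-3/2})$. The trade-off is that you present the uniform fourth-moment bound as an external ``obstacle'' and list several possible justifications, whereas in the paper it is simply a direct consequence of Proposition~\ref{proposition:LSIforIsingModels}, which has already been established precisely for this purpose; your sketch would be cleaner if you just cited that proposition.
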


Before we prove this lemma (and consequently Theorem \ref{theorem:CLTviaSteinsMethod}), we will state concentration of measure results in the block spin Ising models. These will be necessary to bound $E_1, E_2, E_3$. The first step is the existence of a logarithmic Sobolev inequality for the Ising model $\mu_{J_n}$ with a constant that is uniform in $n$.

\begin{proposition}			\label{proposition:LSIforIsingModels}
	Under the general assumptions, if $\norm{\Gamma_\infty A \Gamma_\infty}_{2 \to 2} < 1$, then for $n$ large enough the Ising model $\mu_{J_n}$ satisfies a logarithmic Sobolev inequality with a constant $\sigma^2 = \sigma^2(\norm{\Gamma_\infty A \Gamma_\infty}_{2 \to 2})$, i.e.~for any function $f: \{-1,+1\}^N \to \IR$ we have
	\begin{equation}\label{eqn:LSIforIsing}
		\Ent_{\mu_{J_n}}(f^2) \le 2\sigma^2 \sum_{i = 1}^N \IE_{\mu_{J_n}} (f - f \circ T_i)^2,
	\end{equation}
	where $\Ent$ is the entropy functional and $T_i: \{-1,+1\}^N \to \{-1,+1\}^N, (\sigma_1, \ldots, \sigma_N) \mapsto (\sigma_1,\ldots, \sigma_{i-1}, -\sigma_i, \sigma_{i+1}, \ldots, \sigma_N)$ the sign flip operator.
\end{proposition}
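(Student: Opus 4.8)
The plan is to obtain the logarithmic Sobolev inequality (LSI) for $\mu_{J_n}$ as a perturbation of the product measure on $\{-1,+1\}^N$. The latter — the uniform measure $\mu_0$ on the hypercube — satisfies an LSI with constant $1$ (the classical two-point inequality, tensorized over the $N$ coordinates, in the ``discrete gradient'' form appearing on the right-hand side of \eqref{eqn:LSIforIsing}). Since $\mu_{J_n}$ has a bounded density with respect to $\mu_0$, the Holley--Stroock perturbation lemma would immediately give an LSI for $\mu_{J_n}$, but with a constant of the form $\exp(\mathrm{osc}(H_n))$, and $\mathrm{osc}(H_n)$ grows linearly in $N$. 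That crude bound is useless here; the whole point is to get a constant \emph{uniform} in $n$. So the real work is to exploit the mean-field (rank-$k$) structure of $H_n$ together with the high-temperature assumption $\norm{\Gamma_\infty A \Gamma_\infty}_{2\to2} < 1$.

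**Key steps.** First I would recall that $H_n(x) = \tfrac12 \skal{\widehat m^{(n)}, \Gamma_n A \Gamma_n \widehat m^{(n)}}$ depends on $x$ only through the $k$-dimensional statistic $\widehat m^{(n)}$, and that $\Gamma_n A \Gamma_n \to \Gamma_\infty A \Gamma_\infty$, so for $n$ large we have $\norm{\Gamma_n A \Gamma_n}_{2\to2} \le \rho$ for some fixed $\rho < 1$. Next, the natural route is a covariance/approximate-tensorization argument: write the relative density as $\tfrac{d\mu_{J_n}}{d\mu_0} \propto \exp(H_n)$ and use a perturbation criterion that is sensitive to the Hessian of $H_n$ rather than its oscillation. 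Concretely, one can use the criterion (going back to work on LSI for spin systems, e.g.\ Bauerschmidt--Bodineau, Zegarlinski, or the approach via the Bakry--\'Emery-type condition for conditional measures) stating that if each single-site conditional distribution of $\mu_{J_n}$ satisfies an LSI with a uniform constant, and the matrix of interactions has operator norm bounded away from $1$ in an appropriate sense, then the full measure satisfies an LSI with a constant depending only on that norm and on $k$. Each single-site conditional of $\mu_{J_n}$ is a Bernoulli-type measure on $\{-1,+1\}$ with bias $\tanh(g_i(X))$, which satisfies an LSI with constant $\le 1$ uniformly (two-point space), so the conditional ingredient is free; the interaction ingredient is precisely controlled by $\norm{\Gamma_n A \Gamma_n}_{2\to2} \le \rho < 1$. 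Putting these together yields \eqref{eqn:LSIforIsing} with $\sigma^2$ a function of $\rho$ only, hence of $\norm{\Gamma_\infty A \Gamma_\infty}_{2\to2}$, uniformly in $n$.

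**Alternative route.** If one prefers to avoid the Dobrushin-type machinery, there is a cleaner self-contained argument tailored to mean-field models: decompose $\mu_{J_n}$ via the Hubbard--Stratonovich representation, writing $\mu_{J_n}$ as a mixture over an auxiliary $k$-dimensional Gaussian field of product measures on $\{-1,+1\}^N$, and then combine the LSI for the (log-concave, since $\norm{\Gamma_\infty A\Gamma_\infty}_{2\to2}<1$ makes the effective potential strictly convex) Gaussian mixing variable with the product-measure LSI via a standard ``LSI for mixtures with contractive dependence'' lemma. The strict convexity of the Hubbard--Stratonovich potential $\Phi_n$ near its unique minimum — already established in the proof of Theorem \ref{theorem:CLTgeneralCase}, where $\Lambda_n - \Lambda_n^2$ is shown to be uniformly positive definite for $n$ large — is exactly what supplies the uniform constant for the mixing variable.

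**Main obstacle.** The delicate point is making the uniformity in $n$ genuinely rigorous: the oscillation of $H_n$ diverges, so any argument that secretly uses Holley--Stroock on the whole Hamiltonian is doomed, and one must be disciplined about only ever perturbing \emph{one coordinate at a time} (where the oscillation of $H_n$ is $O(N^{-1/2}\norm{\widehat m}_\infty)\cdot$bounded, i.e.\ $O(1)$ per site and summably controlled), or else work entirely on the $k$-dimensional reduced picture where the relevant quantity is the fixed matrix norm $\rho < 1$. I would therefore organize the proof around the single-site conditional measures and an interaction matrix whose norm is $\le \rho$, invoking a standard ``approximate tensorization / LSI under weak interactions'' result, and checking that the constant it produces depends on the data only through $\rho$ and $k$.
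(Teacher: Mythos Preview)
Your proposal is sound, but the paper takes a much shorter path: it simply invokes \cite[Proposition~1.1]{GSS18}, which gives the LSI for an Ising model $\mu_J$ whenever the interaction matrix satisfies $\norm{J}_{2\to2}<1$ (the paper remarks that the hypothesis $\norm{J}_{1\to1}<1$ stated there can be replaced by the spectral-norm condition). The only additional observation needed is that $\norm{J_n}_{2\to2}=\norm{\Gamma_n A\Gamma_n}_{2\to2}$, which follows from the identity $\skal{x,J_n x}=\skal{\widehat m^{(n)},\Gamma_n A\Gamma_n\,\widehat m^{(n)}}$ together with $\norm{\widehat m^{(n)}}_2\le\norm{x}_2$ (equality on block-constant vectors), and then the convergence $\Gamma_n A\Gamma_n\to\Gamma_\infty A\Gamma_\infty$ gives $\norm{J_n}_{2\to2}<1$ for $n$ large. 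What you have sketched is, in effect, a proof of the cited black box: your approximate-tensorization route (two-point conditionals plus a Zegarlinski/Marton-type weak-dependence criterion governed by $\norm{J_n}_{2\to2}$) is precisely the engine behind such results, and your Hubbard--Stratonovich alternative is the Bauerschmidt--Bodineau style argument. Either would yield a self-contained proof; the paper is content to cite. The one place to tighten your write-up is to make explicit that the weak-interaction criterion applies to the $N\times N$ matrix $J_n$, not directly to the $k\times k$ matrix $\Gamma_n A\Gamma_n$, and to record the equality of their spectral norms as above.
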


This follows immediately from \cite[Proposition 1.1]{GSS18}, since $\Gamma_n A \Gamma_n \to \Gamma_\infty A \Gamma_\infty$, which implies the convergence of the norms, i.e.~for $n$ large enough we have $\norm{\Gamma_n A \Gamma_n}_{2 \to 2} < 1$. Although the condition in \cite{GSS18} is $\norm{J}_{1 \to 1} < 1$, this was merely for applications' sake and $\norm{J}_{2 \to 2} < 1$ is sufficient to establish the logarithmic Sobolev inequality.

For any function $f: \{-1,+1\}^N \to \IR$ and any $r \in \{1,\ldots,N\}$ we write
\[
	\mathfrak{h}_r f(x) = \abs{f(x) - f(T_r x)},
\]
so that \eqref{eqn:LSIforIsing} becomes
\[
\Ent_{\mu_{J_n}}(f^2) \le 2\sigma^2 \sum_{r = 1}^N \int (\mathfrak{h}_r f(x))^2 d\mu_{J_n}(x).
\]
Moreover, it is known that \eqref{eqn:LSIforIsing} implies a Poincar{\'e} inequality
\begin{equation}\label{eqn:PoincareforIsing}
	\Var(f) \le \sigma^2 \sum_{r = 1}^N \IE \mathfrak{h}_r f(X)^2.
\end{equation}

\begin{proof}[Proof of Lemma \ref{lemma:BoundOnEi}]
	\textbf{Error term $\mathbf{E_1}$:} To treat the term $E_1$, fix $i \in \{1,\ldots, k\}$ and observe that
	\begin{align*}
		\IE \left( (\hat{m}_i(X) - \hat{m}_i(\tilde{X}))^2 \mid \mathcal{F} \right) &= N^{-1} \sum_{j = 1}^N \IE \left( (\hat{m}_i(X) - \hat{m}_i(\overline{X}_j, \tilde{X}_j))^2 \mid \mathcal{F} \right) \\
		&= (N\abs{B_i})^{-1} \sum_{j \in B_i} \IE \left( (X_j - \tilde{X}_j)^2 \mid \mathcal{F} \right) \\
		&= - 2(N\abs{B_i})^{-1} \sum_{j \in B_i} X_j \tanh(g_j(X)) + 2N^{-1}.
	\end{align*}
	Thus, if we define
	\begin{equation*}
	f_i(X) \coloneqq \abs{B_i^{(n)}}^{-1/2} \sum_{j \in B_i} X_j \tanh \left( N^{-1} \sum_{l = 1}^k A_{il} m_l(X) - N^{-1} A_{ii} X_i \right),
	\end{equation*}
	we see that
		\[
		\\Var^{1/2}\left( \IE \left( (\hat{m}_i^{(n)} - \hat{m}_i^{(n)}\, ')^2 \mid \mathcal{F} \right) \right) = 2N^{-1} \abs{B_i^{(n)}}^{-1/2} \Var^{1/2}(f_i(X)),
		\]
	and we need to show that $\Var(f_i(X)) = O(1)$. Using the Poincar{\'e} inequality \eqref{eqn:PoincareforIsing} it suffices to prove that $\mathfrak{h}_r f_i(X)^2 \le C \abs{B_i^{(n)}}^{-1}$. \par
	Let $r \in \{1,\ldots, N\}$ be arbitrary and define $h_i(X) \coloneqq N^{-1} \sum_{l = 1}^k A_{il} m_l(X) - N^{-1} A_{ii} X_i$. The first case is that $r \in B_i^{(n)}$, for which
	\begin{align*}
	  \mathfrak{h}_r f_i(X) &\le \abs{B_i}^{-1/2} \abs{2X_r \tanh(h_i(X))} + \abs{B_i}^{-1/2} \sum_{\substack{j \in B_i \\ j \neq r}} \abs{\tanh(h_i(X)) - \tanh(h_i(T_r X))} \\
	  &\le 4 \abs{B_i}^{-1/2} + \abs{B_i}^{-1/2} N^{-1} \sum_{\substack{j \in B_i\\j \neq r}} \abs*{\sum_{l = 1}^k A_{il} (m_l(X) - m_l(T_r(X)))} \\
	  &\le \abs{B_i}^{-1/2} (4 + 2\norm{A}_\infty).
	\end{align*}
	The second case $r \notin B_i^{(n)}$ follows by similar reasoning. \par
	\parindent0mm\textbf{Error term $\mathbf{E_2}$:} The second term $E_2$ is much easier to estimate, as
	\[
	\IE \abs{\hat{m}_i - \hat{m}_i'}^3 = N^{-1} \abs{B_i}^{-3/2} \sum_{j \in B_i} \IE \abs{X_j - \tilde{X}_j}^3 \le 8N^{-1} \abs{B_i}^{-1/2} = O(N^{-3/2}).
	\]
	\textbf{Error term $\mathbf{E_3}$:} To estimate the variance of the remainder term $R$ we first split it into two sums. For any $i = 1, \ldots, k$ write
	\begin{align*}
		R_i(X) &= N^{-1} \Big( \abs{B_i}^{-1/2} \sum_{j \in B_i} g_j(X) - \tanh(g_j(X)) + N^{-1} A_{ii} X_j \Big) \\
		&= N^{-1} \abs{B_i}^{-1/2} \sum_{j \in B_i} g_j(X) - \tanh(g_j(X)) + N^{-2} A_{ii} m_i(X) \\
		&\eqqcolon R_j^{(1)}(X) + R_j^{(2)}(X).
	\end{align*}
	Clearly $\norm{R_i - \IE R_i}_2 \le \norm{R_i^{(1)} - \IE R_i^{(1)}}_2 + \norm{R_i^{(2)} - \IE R_i^{(2)}}_2$ and we estimate these terms separately. It is obvious that the $L^2$ norm of the second term is of order $O(N^{-2})$. To estimate $R^{(1)}_i$, we use $\tanh(x) - x = O(x^3)$ to obtain
	\begin{align*}
		\norm{R^{(1)}_i - \IE R^{(1)}_i}_2 &\le CN^{-1} \abs{B_i}^{-1/2} \sum_{j \in B_i} \norm{ \abs{g_j(X)}^3}_2 \\
		&\le C N^{-1} \abs{B_i}^{-1/2} \sum_{j \in B_i} \norm{\abs{N^{-1/2} (A\Gamma \hat{m})_j}^3}_2 + \norm{N^{-3} \abs{A_{ii}}^3}_2 \\
		&= O(N^{-2}) + O(N^{-5/2}).
	\end{align*}
	In the last line we have used the fact that $\norm{(A \Gamma \hat{m})_i^3}_2 = \norm{(A \Gamma \hat{m}_i}_6^3$ and for all $p \ge 2$
	\[
		\norm{(A \Gamma \hat{m})_i}_p \le C \sum_{l = 1}^k \norm{\hat{m}_l}_p \le C \sum_{l = 1}^k (\sigma^2 p)^{1/2}
	\]
	which evaluated at $p = 6$ gives $\norm{(A\Gamma \hat{m})_i}_6^3 = O(1)$. For the details see \cite{GSS18}. The constant depends on a norm of $A \Gamma$, which by convergence to $A \Gamma_\infty$ can again be chosen independently of $n$.
\end{proof}

\begin{proof}[Proof of Theorem \ref{theorem:CLTviaSteinsMethod}]
The theorem follows immediately from Corollary \ref{corollary:differencemXandZFormulation} and Lemma \ref{lemma:BoundOnEi}.
\end{proof}

\section{Discussion and open questions}		\label{section:Discussion}
Although the questions raised in the introduction have been answered to a certain degree, there are still open questions that we were not yet able to answer. \par
The first question concerns the maxima of the rate function $I$. Firstly, note that by \cite[Theorem A.1]{CET05} the global maxima of $I$ are related to the global minima of the so-called \emph{pressure functional}, which can for example be found in \cite[equation (14)]{FC11}. Using the compactness of $[-1,1]^k$ and the continuity of $I$, the existence of a maximiser easily follows, but the number of maximisers is still obscure. From real-analyticity of $I$, we can infer that the set of maximisers is a $\lambda^k$ null set, but it could in principle contain infinitely many points. However, Lemmas \ref{lemma:2BlocksNumberOfMaxima} and \ref{lemma:BlockInteractionTwomaximisers} as well as numerics suggest that for positive interactions and $k \ge 2$, the number of local minima is twice the number of independent systems - see Figures \ref{figure:LDPk3} for the $k = 3$ and \ref{figure:LDPk2} for the $k = 2$ case below. 

However, we believe that the case of negative interactions between groups might drastically change the picture. Indeed, consider a model with three blocks and positive interaction $\beta$ within the blocks and negative interaction $\alpha$ between the blocks. Then, if $\beta$ is large enough, the points within the blocks will tend to be aligned. However, as $\alpha$ is negative, the magnetizations of block one and two will try to have different signs, but so do the magnetizations of blocks two and three, and three and one. Hence, frustration occurs. In this respect, a model with positive and negative interactions carries features of a spin glass.   

\begin{figure}[!ht]
	\centering
  \begin{subfigure}[c]{0.49\textwidth}
  \includegraphics[width=\textwidth]{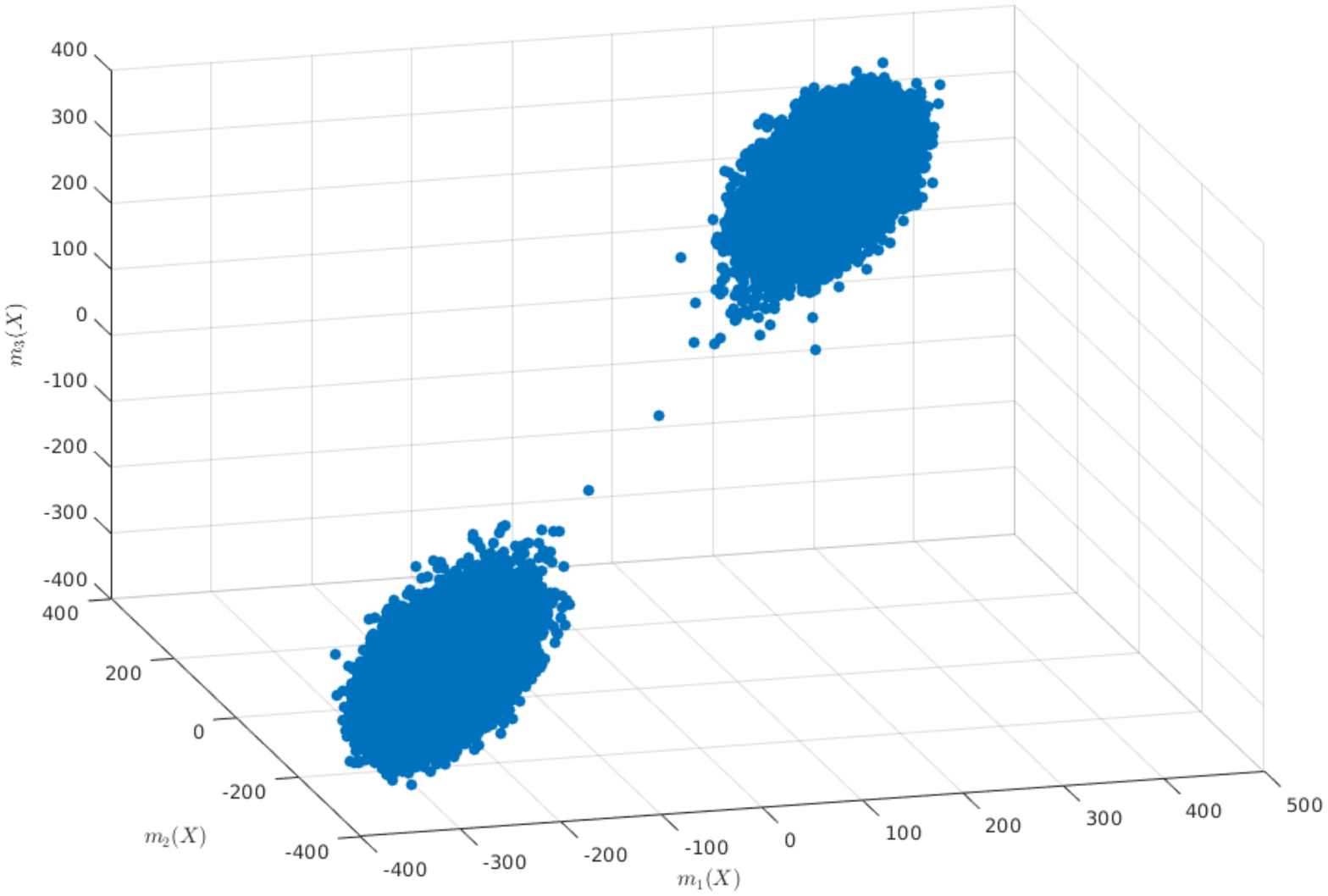}
  \caption{Block interaction $A = \begin{pmatrix} 2 & 0.9 & 0.7 \\ 0.9 & 2 & 0.4 \\ 0.7 & 0.4 & 2 \end{pmatrix}$}
	\end{subfigure}
	\begin{subfigure}[c]{0.49\textwidth}
  \includegraphics[width=\textwidth]{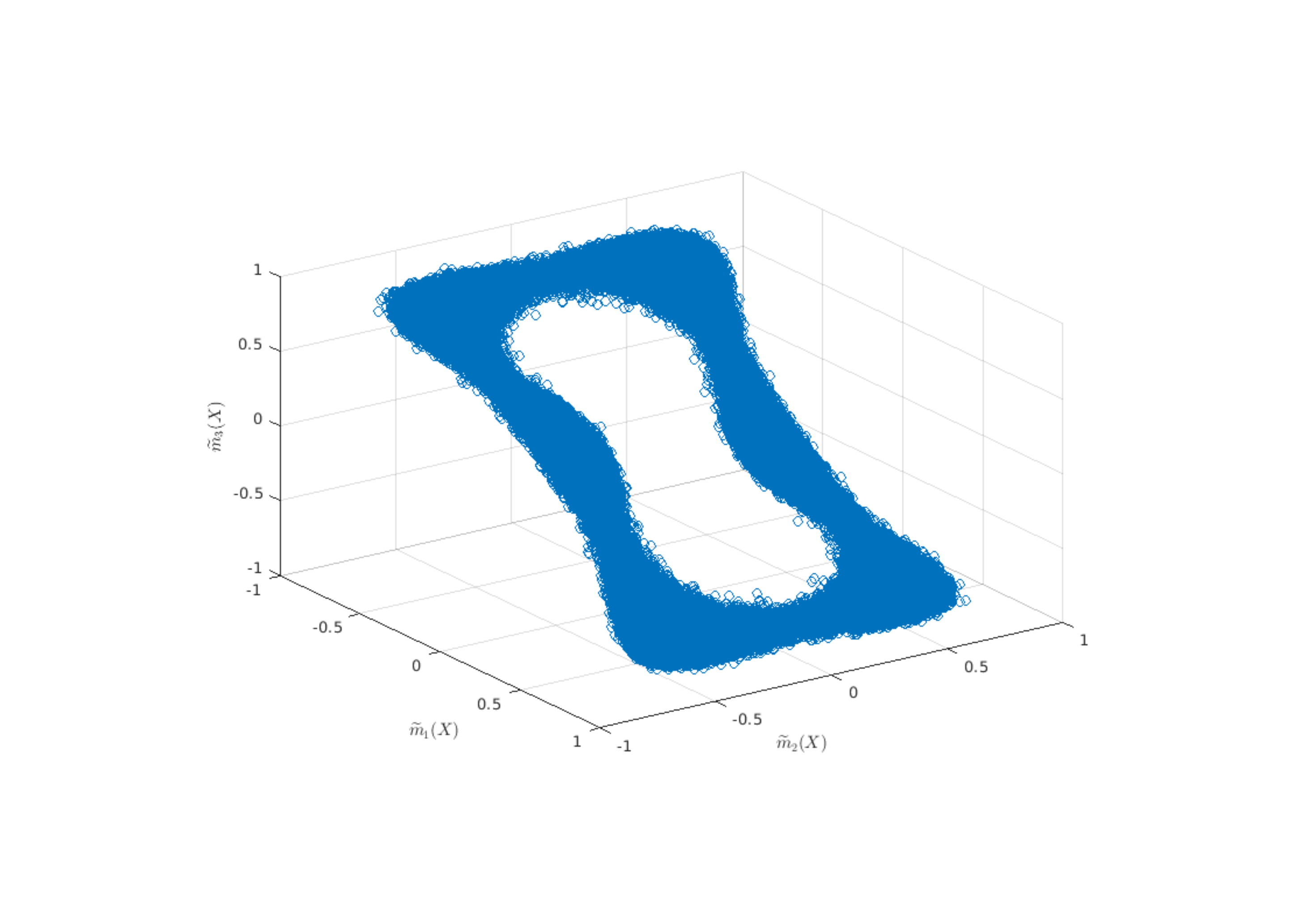}
  \caption{Block interaction $A = \begin{pmatrix} 3 & -1 & -1 \\ -1 & 3 & -1 \\ -1 & -1 & 3 \end{pmatrix}$}
	\end{subfigure}
	\caption[A scatterplot of the block magnetization in the uniform case]{A scatterplot of the normalized block magnetization $\tilde{m}$ in the uniform case with $k = 3$ blocks and $n = 500$, sampled using the Glauber dynamics -- note that it is not rapidly mixing!}
	\label{figure:LDPk3}
\end{figure}
\begin{figure}[!ht]
\centering
	\begin{subfigure}[c]{0.4\textwidth}
		\includegraphics[width=\textwidth]{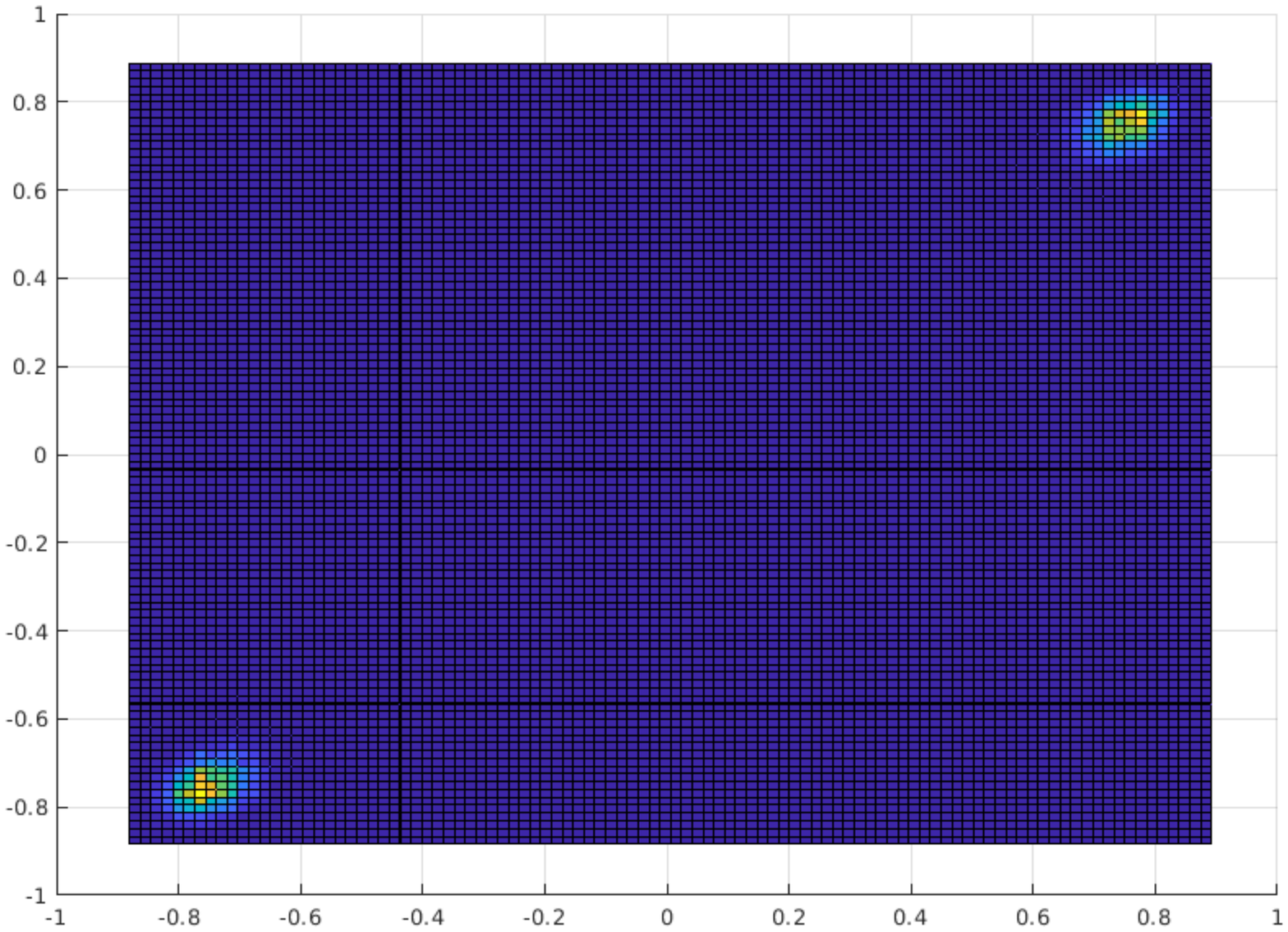}	
	\end{subfigure}
	\begin{subfigure}[c]{0.4\textwidth}
		\includegraphics[width=\textwidth]{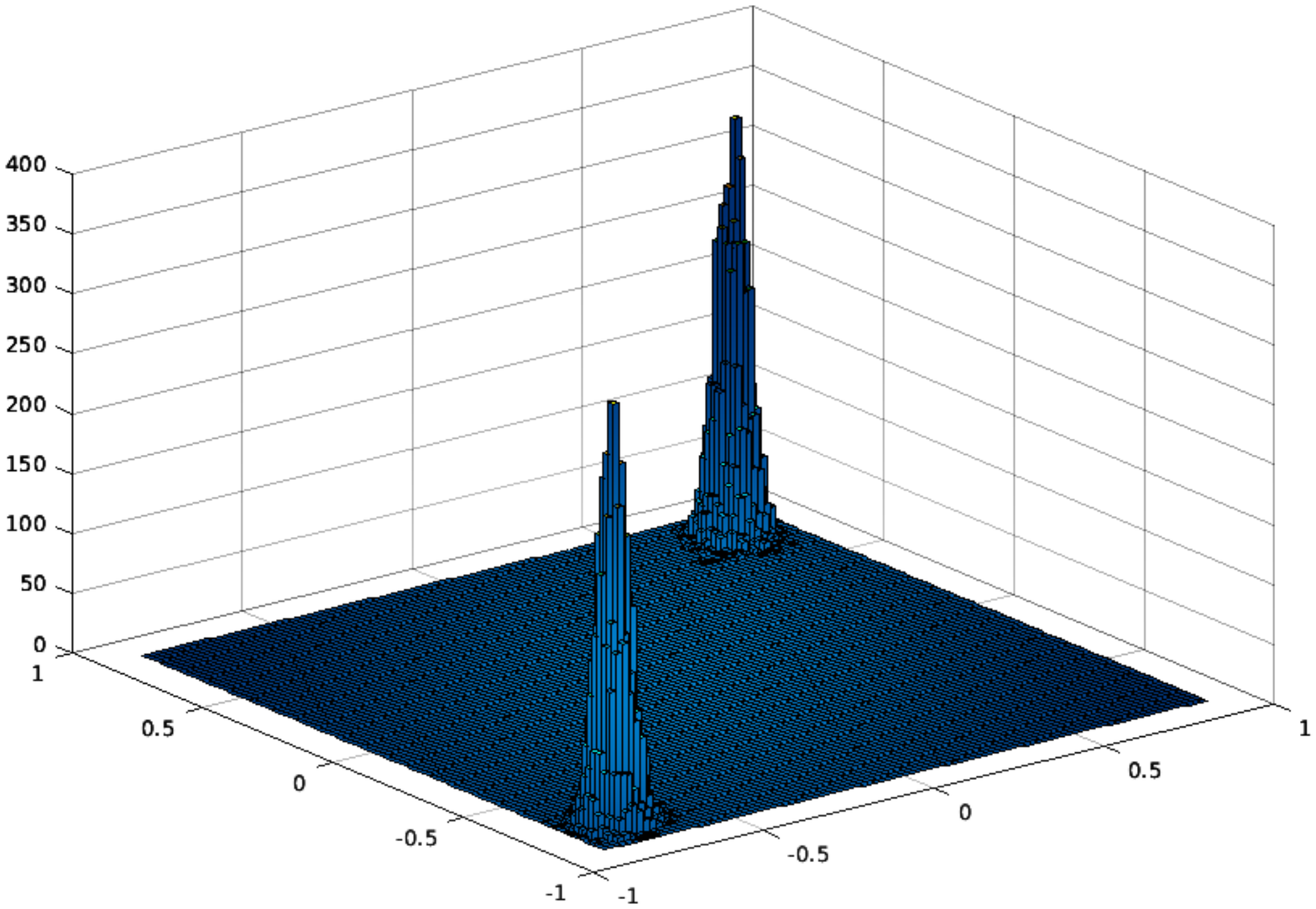}
	\end{subfigure}
	\caption[A heatmap (left) and a histogram (right) of the block magnetization vector in the uniform, low temperature case.]{A heatmap (left) and a histogram (right) of the block magnetization vector $m = (m^1, m^2)$ in the uniform, low temperature case. The block interaction matrix is given by $A = \begin{pmatrix}
	 	 	 1.8 & 0.8 \\
	 	  	0.8 & 1.8
	 \end{pmatrix}$.}
	 \label{figure:LDPk2}
\end{figure}

Another question is the relationship of Theorems \ref{theorem:CLTgeneralCase} and \ref{theorem:CLTviaSteinsMethod}. In Theorem \ref{theorem:CLTviaSteinsMethod} we consider the distance to a normal distribution with covariance matrix $\Sigma_n \coloneqq \IE \hat{m}^{(n)} (\hat{m}^{(n)})^T$ and not to $\Sigma_{\infty} \coloneqq (\Id - \Gamma_\infty A \Gamma_\infty)^{-1}$, which is the covariance matrix of the limiting distribution. Testing against functions $h \in \mathcal{C}_c^\infty(\IR^k)$, we see that $\Sigma_\infty$ is the limit of the matrices $\Sigma_n$. It is an interesting task to provide suitable bounds of $\norm{\Sigma_n - \Sigma_{\infty}}$ in any matrix norm, since \cite[Proposition 2.8]{RR09} provides bounds of $\abs{\IE h(X) - \IE h(Y)}$ for two random vectors with $X \sim \mathcal{N}(0,\Sigma_0)$ and $Y \sim \mathcal{N}(0, \Sigma_1)$ in terms of the $1$-distance of $\Sigma_0$ and $\Sigma_1$. \par
Thirdly, it remains an open problem to quantify the distance to a normal distribution with the ``limiting'' covariance matrix $\Sigma_\infty$. The central limit theorem in the one-dimensional Curie--Weiss model has been solved for example in \cite[Corollary 2.9]{EL10}. Therein one can see that the limiting covariance is $(1-\beta)^{-1}$ by considering the approximate linear regression condition. A similar condition is true in the multidimensional case. For example, in Lemma \ref{lemma:conditionalExpectation} we have proven
\begin{equation}
\IE \left( \hat{m}^{(n)} - \hat{m}^{(n)}\, ' \mid \mathcal{F} \right) = \lambda \Lambda^{-1} \hat{m}^{(n)} + R(X),
\end{equation}
where $\lambda = N^{-1}$ and $\Lambda = (\Id - \Gamma_n A \Gamma_n)^{-1}$. Thus, in the case $\Gamma_n \equiv \Gamma_\infty$ (e.g.~consider a subsequence along which this holds) $\Lambda$ is the covariance matrix of the limit distribution. However, we have been unable to find a suitable modification of \cite[Theorem 2.1]{RR09} that enables one to compare the distribution of the random vector $\hat{m}^{(n)}$ with $\mathcal{N}(0, \Lambda)$.

\bibliographystyle{abbrv}
\bibliography{literature}
\end{document}